\let\bowtie\relax
\DeclareSymbolFont{STIXsymbols}{LS1}{stixscr}{m}{n}
\DeclareMathSymbol{\bowtie}{\mathrel}{STIXsymbols}{"0E}
\newtheorem{thm}{Theorem}[section]
\newtheorem{prop}[thm]{Proposition}
\newtheorem{lem}[thm]{Lemma}
\newtheorem{cor}[thm]{Corollary}
\newtheorem{qst}[thm]{Question}
\newtheorem{defn}[thm]{Definition}
\newtheorem{prob}[thm]{Problem}
\newtheorem{rmk}[thm]{Remark}
\newcommand{\Qed}{\rule{2.5mm}{3mm}}
\newcommand{\irr}[1]{\operatorname{Irr}(#1)}
\newcommand{\RR}{\mathbb{R}}
\newcommand{\CC}{\mathbb{C}}
\DeclareMathOperator\sym{Sym}
\DeclareMathOperator\alt{Alt}
\DeclareMathOperator{\fix}{fix}
\newcommand{\triv}[1]{\operatorname{\mathbf{1}}_{#1}}
\newcommand{\ind}[2]{\triv{#1}^{#2}}
\newcommand{\sgn}[1]{\mathsf{sgn}(#1)}
\newcommand{\stab}[2]{\operatorname{Stab}(#1,#2)}
\newcounter{case}
\renewcommand{\thecase}{\arabic{case}}
\newcounter{subcase}
\numberwithin{subcase}{case}
\newenvironment{proof}{{\noindent \sc Proof.}}{\hfill $\Qed$ \\}
\begin{document}


\begin{center}
{\bf\Large On cocliques in commutative Schurian association schemes of the symmetric group} \\ [+4ex]  
\large Roghayeh Maleki{} and  
\addtocounter{footnote}{0} 
Andriaherimanana Sarobidy Razafimahatratra{\footnote{ corresponding author}} 
\\ [+2ex]
{\it \small 
University of Primorska, UP FAMNIT, Glagolja\v ska 8, 6000 Koper, Slovenia\\
University of Primorska, UP IAM, Muzejski trg 2, 6000 Koper, Slovenia\\
}
\end{center}

\begin{abstract}
	Given the symmetric group $G = \sym(n)$ and a multiplicity-free subgroup $H\leq G$, the orbitals  of the action of $G$ on $G/H$ by left multiplication induce  a commutative association scheme. The irreducible constituents of the permutation character of $G$ acting on $G/H$ are indexed by partitions of $n$ and if $\lambda \vdash n$ is the second largest partition in dominance ordering among these, then the Young subgroup $\sym(\lambda)$ admits two orbits in its action on $G/H$, which are $\mathcal{S}_\lambda$ and its complement. 
	
	In their monograph [Erd\H{o}s-Ko-Rado theorems: Algebraic Approaches. {\it Cambridge University Press}, 2016] (Problem~16.13.1), Godsil and Meagher asked whether $\mathcal{S}_\lambda$ is a coclique of a graph in the commutative association scheme arising from the action of $G$ on $G/H$. If such a graph exists, then they also asked whether its smallest eigenvalue is afforded by the $\lambda$-module.
	
	In this paper, we initiate the study of this question by taking $\lambda = [n-1,1]$.
	We show that the answer to this question is affirmative for the pair of groups $\left(G,H\right)$, where $G = \sym(2k+1)$ and $H = \sym(2) \wr \sym(k)$, or $G = \sym(n)$ and $H$ is one of $\alt(k) \times \sym(n-k),\ \alt(k) \times \alt(n-k)$, or $\left(\alt(k)\times \alt(n-k)\right) \cap \alt(n)$. For the pair $(G,H) = \left(\sym(2k),\sym(k)\wr \sym(2)\right)$, we also prove that the answer to this question of Godsil and Meagher is negative.
\end{abstract}

\begin{quotation}
\noindent {\em Keywords:} 
Schurian association schemes, Gelfand pairs, 
cocliques.\\
\noindent{\em Math. Subj. Class.:} 05C50, 20C30, 05C35.
\end{quotation}


\section{Introduction}

Let $X$ be a finite non-empty set and let $\mathcal{R} = \{ R_0,R_1,\ldots,R_k \}$ be a collection of relations on $X$. We say that the pair $(X,\mathcal{R})$ is an \emph{association scheme} if the following statements are satisfied:
\begin{enumerate}[(i)]
	\item $\{ (x,x) :\ x \in X \} \in \mathcal{R}$,\label{first}
	\item $\mathcal{R}$ is a partition of $X \times X$,\label{second}
	\item for any $i\in \{0,1,\ldots,k\}$, the relation $\{(y,x) :\ (x,y) \in R_i\}$ belongs to $\mathcal{R}$,\label{third}
	\item for any $(x,y)\in R_k$, the number $|\left\{ z\in X :\ (x,z) \in R_i \mbox{ and }(z,y)\in R_j \right\}|$ is a number $p_{ij}^k$ depending only on $i,j$ and $k$ and not the choice of $x$ and $y$.\label{fourth}
\end{enumerate}
The size of $X$ is the \emph{order} of the association scheme $(X,\mathcal{R})$ and the number of relations $k+1$ is its \emph{rank}. It is worth noting that a non-empty set $X$ and a relation $R$ on $X$ determines a digraph whose vertex set is $X$ and for any $x,y\in V(X)$, an arc between $x$ and $y$ occurs if and only if $(x,y)\in R$. Consequently, each relation in an association scheme determines a digraph on $X$ and therefore an adjacency matrix. An association scheme $(X,\mathcal{R})$ is \emph{symmetric} if the adjacency matrices corresponding to the relations are symmetric. Moreover, $(X,\mathcal{R})$ is \emph{commutative} if the corresponding adjacency matrices commute with each other. A survey on commutative association schemes can be found in \cite{martin2009commutative}.

An example of well-known association schemes is the \emph{triangular association scheme} $(X,\mathcal{R})$, where $X =  \left\{ A \subset \{1,2,\ldots,n\} :\ |A| = 2 \right\}$ and $\mathcal{R} = \{ R_0,R_1,R_2 \}$, with $R_i = \{ (A,B) \in X \times X :\ |A\cap B| = 2-i \}$, for $i \in \{0,1,2\}$.

Association schemes were introduced in the 50s by Bose and Shimamoto \cite{bose1952classification}, and the study of these objects has developed into a major area of study in algebraic combinatorics since then. A generalization of these objects known as \emph{coherent configurations} were also introduced by Higman \cite{higman1970coherent} in the 70s to study permutation groups. 
Another well-known example of association schemes arises from the action of a finite transitive group $G\leq \sym(\Omega)$, where $\Omega$ is a finite non-empty set. If $\mathcal{O}$ is the set of all orbitals of the action of $G$ on $\Omega$, i.e., its orbits in the induced action on $\Omega \times \Omega$, then $(\Omega,\mathcal{O})$ is an association scheme. 

The association scheme $(\Omega,\mathcal{O})$ is called the \emph{orbital scheme} of $G$. In addition, an association scheme arising as an orbital scheme is called \emph{Schurian}. The (di)-graphs in an orbital scheme are called orbital (di)-graphs. An example of Schurian association schemes is again the triangular association scheme which arises from the action of $\sym(n)$ on the $2$-subsets of $\{1,2,\ldots,n\}$.

Next, we recall a result about the commutativity of Schurian association schemes. Let $H$ be a subgroup of a group $G$. We will denote the trivial character of $H$ by $\triv{H}$ and the induced character of $\triv{H}$ on the group $G$ by $\ind{H}{G}$. Let $\irr{G} = \{ \phi_1,\phi_2,\ldots,\phi_t \}$ be a complete set of (complex) irreducible characters of $G$. We say that $H$ is a \emph{multiplicity-free} subgroup of $G$ if the irreducible decomposition
\begin{align*}
	\mathbf{1}_H^G = \sum_{i=1}^t m_i \phi_i, 
\end{align*}
is such that $m_i \in \{0,1\}$, for $i\in \{0,1,\ldots,k\}$. If $H\leq G$ is multiplicity-free, then we say that the pair $(G,H)$ is a \emph{Gelfand pair}. Gelfand pairs are well studied and we refer the readers to \cite[Chapter~4]{ceccherini2008harmonic} for details on them. A consequence of $(G,H)$ being a Gelfand pair is that the Bose-Mesner algebra of the Schurian association scheme induced by the action of $G$ on $G/H$ coincides with its Hecke algebra (also known as the double centralizer algebra).
The commutativity of Schurian association schemes is determined by the point stabilizer of the corresponding transitive groups. Let $G\leq \sym(\Omega)$ be a transitive group and let $H$ be the point stabilizer of $G$. The association scheme which arises from the transitive group $G$ is commutative if and only if $(G,H)$ is a Gelfand pair (see \cite[Chapter~13]{godsil2016erdos} for details).

As we will study commutative Schurian association schemes arising from transitive actions of the symmetric group, we need to review some facts on the representation theory of these groups. Recall that the irreducible submodules $\mathbb{C}\sym(n)$ are indexed by partitions of the integer $n$. For any partition $\lambda \vdash n$, the irreducible $\mathbb{C}\sym(n)$-module corresponding to $\lambda$ is the \emph{Specht module} $S^\lambda$. Therefore, the irreducible characters of $\sym(n)$ are also indexed by the partitions of the integer $n$. Given a partition $\lambda \vdash n$, we will denote the corresponding irreducible character by $\chi^\lambda$. For any partition $\lambda = [\lambda_1,\lambda_2,\ldots,\lambda_k]$ of $n$, the \emph{Young subgroup} $\sym(\lambda)$ is the subgroup $\sym(\lambda_1) \times \sym(\lambda_2) \times \ldots \times \sym(\lambda_k)$. For any two partitions $\lambda = [\lambda_1,\lambda_2,\ldots,\lambda_k]$ and $\mu = [\mu_1,\mu_2,\ldots,\mu_t]$ of $n$, we say that $\lambda$ \emph{dominates} $\mu$ and write $\mu \trianglelefteq \lambda$, if $\sum_{i = 1}^j \lambda_i \geq \sum_{i = 1}^j \mu_i$, for all $j\in \{1,2,\ldots,t\}$. 

Now, we will describe the problem considered in this paper. Henceforth, we assume that $G = \sym(n)$ and $H$ is a multiplicity-free subgroup of $G$. Let $$\Lambda(n,H) := \left\{ \lambda \vdash n :\ \langle \ind{H}{G},\chi^\lambda \rangle = 1 \right\}.$$ The eigenvalues of the orbital digraphs corresponding to $(G,H)$ are indexed by the partitions in $\Lambda(n,H)$. In particular, an eigenspace of any orbital digraph is a direct sum of certain irreducible $\mathbb{C}\sym(n)$-modules, whose corresponding partitions appear in $\Lambda(n,H)$. For any $\lambda \in \Lambda(n,H)$, we refer to the subspace given by the Specht module $S^\lambda$ as the $\lambda$-module. The partition $[n]$ is always an element of $\Lambda(n,H)$ due to the fact that $G$ acts transitively on $G/H$ by left multiplication. Let $\lambda \neq [n]$ be the partition which is the second largest in dominance ordering in $\Lambda(n,H)$. By \cite[Theorem~13.9.1]{godsil2016erdos},  the Young subgroup $\sym(\lambda)$ admits two orbits in its action on $G/H$. In \cite[Problem~16.13.1]{godsil2016erdos}, Godsil and Meagher asked the following question.
\begin{qst}[Godsil-Meagher]
	Let $G = \sym(n)$ and $H$ be a multiplicity-free subgroup of $G$, and define $\Omega = G/H$. Assume that $\lambda\vdash n$ is the second largest in dominance ordering in $\Lambda(n,H)$ and $\{ \mathcal{S},\Omega\setminus \mathcal{S} \}$ is the orbit partition of $\sym(\lambda)$ on $\Omega$.
	\begin{enumerate}[(a)]
		\item Is there an orbital graph of $G$ acting on $G/H$ in which $S$ is a coclique?\label{qst:part1}
		\item Is the eigenvalue corresponding to the $\lambda$-module the least eigenvalue for such graphs?\label{qst:part2}
	\end{enumerate}\label{qst:main}
\end{qst}

\subsection{Motivation}
Our motivation to study this question is closely related  to a famous extremal set theory theorem. The Erd\H{o}s-Ko-Rado (EKR) theorem is one of the most important results in extremal combinatorics. We say that a collection of $k$-subsets of $[n]:= \{1,2,\ldots,n\}$ is a \emph{$t$-intersecting family} if $|A\cap B|\geq t$ for all $A,B\in \mathcal{F}$. The EKR theorem is stated as follows.
\begin{thm}[EKR \cite{erdos1961intersection}]
	For any two positive integers $k\geq t$, there exists $n_0(k,t)$ such that if $n\geq n_0(k,t)$ and $\mathcal{F}$ is a $t$-intersecting family of $k$-subsets of $[n]:=\{1,2,\ldots,n\}$, then $|\mathcal{F}|\leq \binom{n-t}{k-t}$. In addition, equality holds if and only if there exists $S\subset [n]$ of size $t$ such that
	\begin{align*}
		\mathcal{F} = \{ A \subset [n] :\ |A|=k \mbox{ and } S\subset A \}.
	\end{align*}\label{thm:EKR}
\end{thm}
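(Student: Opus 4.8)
The plan is to recast the extremal problem in the language of the Johnson association scheme and then apply Hoffman's ratio bound, exactly in the spirit of the algebraic methods underlying Question~\ref{qst:main}. Let $\Omega$ denote the set of $k$-subsets of $[n]$, and for $0 \le i \le k$ let $A_i$ be the adjacency matrix of the relation $\{(A,B) :\ |A \cap B| = k-i\}$. The pair $(\Omega, \{A_0, \ldots, A_k\})$ is the Johnson scheme $J(n,k)$, a symmetric commutative association scheme whose common eigenspaces $V_0, V_1, \ldots, V_k$ are the irreducible $\CC\sym(n)$-modules $V_j \cong S^{[n-j,j]}$. A $t$-intersecting family $\mathcal{F}$ is precisely a coclique in the graph $\Gamma = \sum_{i=k-t+1}^{k} A_i$, whose edges join pairs with $|A \cap B| \le t-1$. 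Thus Theorem~\ref{thm:EKR} amounts to bounding the independence number $\alpha(\Gamma)$ and then classifying the extremal cocliques.

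First I would settle the base case $t=1$, where $\Gamma$ is the Kneser graph $A_k$ (disjointness). Its eigenvalue on $V_j$ is the Eberlein value $(-1)^j \binom{n-k-j}{k-j}$, so the valency is $d = \binom{n-k}{k}$ and, for $n \ge 2k$, the least eigenvalue $\tau = -\binom{n-k-1}{k-1}$ is afforded by $V_1 \cong S^{[n-1,1]}$. Hoffman's ratio bound then gives
\[
\alpha(\Gamma) \le |\Omega|\cdot \frac{-\tau}{d - \tau} = \binom{n}{k}\cdot \frac{\binom{n-k-1}{k-1}}{\binom{n-k}{k} + \binom{n-k-1}{k-1}} = \binom{n-1}{k-1},
\]
where the last identity is a routine binomial simplification. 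Note that the extremal eigenspace here is exactly the $[n-1,1]$-module, mirroring part~\ref{qst:part2} of Question~\ref{qst:main}; this is precisely the phenomenon that the present paper investigates in a far more general setting.

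For general $t$ the same scheme is available, but a single relation will not make the ratio bound tight, because the characteristic vector of a $t$-star (all $k$-sets containing a fixed $t$-set) has nonzero components in every eigenspace $V_0, V_1, \ldots, V_t$. The remedy I would use is to replace $\Gamma$ by a carefully weighted element $M = \sum_i c_i A_i$ of the Bose-Mesner algebra, chosen so that $M$ vanishes on $t$-intersecting pairs, is nonnegative off the diagonal, and attains its least eigenvalue simultaneously on $V_1, \ldots, V_t$; running the ratio bound on $M$ then yields $\alpha(\Gamma) \le \binom{n-t}{k-t}$ once $n$ exceeds a threshold $n_0(k,t)$. Determining the coefficients $c_i$ and verifying the required spectral inequalities for large $n$ is the technical heart of the argument, and it is exactly here that the hypothesis $n \ge n_0(k,t)$ is consumed; this is the step I expect to be the main obstacle. (An alternative, purely combinatorial route is Frankl's shifting: apply the compressions $S_{ij}$ to reduce to a shifted $t$-intersecting family, whose structure can then be bounded directly. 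I would keep this in reserve in case the spectral optimization for general $t$ proves unwieldy.)

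Finally, for the equality case I would analyze tightness of the ratio bound. Equality forces the characteristic vector of $\mathcal{F}$, after subtracting its constant component, to lie in the least-eigenvalue eigenspace $V_1 \oplus \cdots \oplus V_t$. Translating this spectral rigidity back to set systems shows that membership in $\mathcal{F}$ is governed by a degree-$t$ condition on the $k$-sets, and a short combinatorial argument then pins $\mathcal{F}$ down to the family of all $k$-sets containing a fixed $t$-subset $S$. The main subtlety is ensuring that no sporadic extremal configuration survives for $n$ just above $n_0(k,t)$, which again relies on the large-$n$ hypothesis.
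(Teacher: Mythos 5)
The paper does not actually prove this statement: Theorem~\ref{thm:EKR} is quoted as classical background, cited to Erd\H{o}s--Ko--Rado \cite{erdos1961intersection}, and the algebraic argument you are reconstructing is precisely the one the paper attributes to Wilson \cite{wilson1984exact}. So there is no internal proof to compare against; your proposal has to stand on its own, and as written it does not. Your $t=1$ case is essentially sound: the eigenvalues of the Kneser graph, the identification of the least eigenvalue $-\binom{n-k-1}{k-1}$ with the $[n-1,1]$-module, and the ratio-bound computation yielding $\binom{n-1}{k-1}$ are all correct. One caveat even here: the uniqueness assertion requires $n>2k$ (matching the paper's remark that $n_0(k,1)=2k+1$), since for $n=2k$ the Kneser graph $K(2k,k)$ is a perfect matching and has many maximum cocliques that are not stars; your tightness analysis would need to rule this boundary case out explicitly.

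For general $t\ge 2$, the proposal defers exactly the step that constitutes the proof. You posit a weighted matrix $M=\sum_i c_iA_i$ in the Bose--Mesner algebra, vanishing on $t$-intersecting pairs, whose least eigenvalue is attained simultaneously on $V_1,\ldots,V_t$ and whose ratio bound evaluates to $\binom{n-t}{k-t}$, but you neither exhibit the coefficients $c_i$, nor prove such a matrix exists, nor verify the required spectral inequalities for large $n$. That construction and verification is the entire content of Wilson's paper, not a routine add-on, so conceding it as ``the technical heart'' and ``the main obstacle'' leaves the theorem unproved for every $t\ge 2$. You also silently invoke a weighted (pseudo-adjacency) version of the ratio bound, which is stronger than Theorem~\ref{thm:ratiobound} as stated in the paper and would itself need to be formulated and justified. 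The equality characterization suffers the same incompleteness: ``a short combinatorial argument then pins $\mathcal{F}$ down'' is asserted rather than given, and the worry you correctly flag about sporadic extremal families near the threshold is never resolved. The fallback via Frankl's shifting is likewise only named. In short: the strategy is the right one and the $t=1$ instance is done, but for general $t$ you have a plan, not a proof.
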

For the case where $t = 1$, Erd\H{o}s, Ko and Rado also proved that $n_0(k,1) = 2k+1$.

The EKR theorem has been extensively studied and extended to other objects such as vector spaces \cite{katona1972simple} and permutations \cite{Frankl1977maximum}. There are various proofs of the EKR theorem which range from purely combinatorial \cite{erdos1961intersection}, to probabilistic \cite{katona1972simple} and algebraic \cite{wilson1984exact}. 

We now exhibit the relations between a certain association scheme and the EKR theorem. Let $n\geq 2k$ be two positive integers and define $\binom{[n]}{k}$ to be the collection of all $k$-subsets of $[n]$. For any $0\leq i\leq k$, define 
\begin{align*}
	\mathcal{O}_i = \left\{ (A,B) \in \binom{[n]}{k} \times \binom{[n]}{k} :\ |A\cap B| = k-i \right\}.
\end{align*}
The \emph{Johnson scheme} $\mathcal{J}(n,k)$ is the association scheme given by $\left( \binom{[n]}{k},\{ \mathcal{O}_0,\mathcal{O}_1,\ldots,\mathcal{O}_k \} \right)$. The Johnson scheme contains the \emph{Johnson graph} $J(n,k)$ and the \emph{Kneser graph} $K(n,k)$, which are the orbital graphs of $\mathcal{O}_1$ and $\mathcal{O}_k$, respectively. The Kneser graph is important in the study of the EKR theorem since it encodes the $1$-intersecting sets of $k$-subsets of $[n]$. Given a \emph{coclique} (or independent set) $\mathcal{S}$ of $K(n,k)$, it is not hard to check that $\mathcal{S}$ has the property that $A\cap B \neq \varnothing$ for all $A,B \in \mathcal{S}$, i.e., it is $1$-intersecting. Conversely, any collection of $k$-subsets of $[n]$ with the property that any two elements intersect is a coclique of $K(n,k)$. Hence, a collection $\mathcal{F}$ of $k$-subsets of $[n]$ is a maximum $1$-intersecting family if and only if it is a maximum coclique in the Kneser graph $K(n,k)$. Using the Johnson scheme, Wilson \cite{wilson1984exact} also gave an algebraic proof of the EKR theorem relying on the well-known Hoffman bound (see Theorem~\ref{thm:ratiobound}) and described precisely the smallest bound $n_0(k,t)$ in Theorem~\ref{thm:EKR} for which the results of the EKR theorem hold. 

One important aspect of the Johnson scheme is that it is Schurian, that is, it arises from the orbital scheme of a transitive group. The corresponding group action for the case of the Johnson scheme is $\sym(n)$ acting on the $k$-subsets of $[n]$, or equivalently, on the cosets of $\sym(k) \times \sym(n-k)$. Another important property of $\sym(k) \times \sym(n-k)$ is that it is a multiplicity-free subgroup of $\sym(n)$. Hence, the Johnson scheme $\mathcal{J}(n,k)$ is a commutative association scheme. 
The second largest partition in dominance ordering in the corresponding permutation character is $[n-1,1]$. Further, one of the two orbits of the Young subgroup $\sym([n-1,1])$ is a maximum coclique in the Kneser graph $K(n,k)$ and its smallest eigenvalue is afforded by the $[n-1,1]$-module. In other words, Question~\ref{qst:main} is true for the Gelfand pair $(G,H) = (\sym(n),\sym(k) \times \sym(n-k))$. 

Now, let $(G=\sym(n),H)$ be a Gelfand pair for which $\lambda \in \Lambda(n,H)$ is the second largest in dominance ordering.  If Question~\ref{qst:main} is true for $(G,H)$, then we obtain an EKR type theorem on $G/H$ or on the corresponding combinatorial objects as follows. Since Question~\ref{qst:main}~\eqref{qst:part1} is true, there exists an orbital graph in which an orbit of $\sym(\lambda)$ acting on the cosets $G/H$ is a coclique. Let $\mathcal{K}(G,H)$ be the union of all orbital graphs with this property. 
Let us consider the following terminologies.
\begin{defn}\hfil
	\begin{enumerate}[(1)]
		\item  Two cosets $xH$ and $yH$ of $H$ are $(G,H)$-intersecting if they are not adjacent in $\mathcal{K}(G,H)$.
		\item A collection $\mathcal{F}$ of cosets in $G/H$ is $(G,H)$-intersecting if any pair of its elements are $(G,H)$-intersecting.
		\item The orbit of a conjugate of $\sym(\lambda)$ on $G/H$ which is a $(G,H)$-intersecting family is called a $(G,H)$-canonical intersecting family.
	\end{enumerate}
\end{defn}

{As Question~\ref{qst:main}~\eqref{qst:part2} is also true, one can prove that any maximum $(G,H)$-intersecting family (i.e., a maximum coclique in $\mathcal{K}(G,H)$) has size equal to the size of the $(G,H)$-intersecting orbit of $\sym(\lambda)$. Using these terminologies, one can pose the following question.}
\begin{prob}
	Assume that $(G,H)$ is a Gelfand pair for which Question~\ref{qst:main} is true. Are the $(G,H)$-canonical intersecting families the only $(G,H)$-intersecting families of maximum size?\label{prob:main}
\end{prob}

If the answer to Problem~\ref{prob:main} is true, then we obtain a full EKR theorem for the cosets $G/H$ in the sense that the $(G,H)$-intersecting families have size at most the size of a $(G,H)$-canonical intersecting family, and those attaining this bound must be a $(G,H)$-canonical intersecting family.

\subsection{Main results}

For the case where $G = \sym(2k)$ and $H = \sym(2)\wr \sym(k)$, the association scheme corresponding to the Gelfand pair $(G,H)$ is the \emph{perfect matching scheme} of the complete graph $K_{2k}$ (see \cite{godsil2016erdos} for details on this).  An EKR type theorem on perfect matchings of the complete graph $K_{2n}$ was proved by Godsil and Meagher in \cite{godsil2016algebraic}, and also Lindzey in \cite{lindzey2017erdHos}. The main technique used in the proof of this EKR type theorem can be extended to prove that the answer to Question~\ref{qst:main} for the Gelfand pair $(\sym(2k),\sym(2)\wr \sym(k))$ is affirmative.

The answer to Question~\ref{qst:main} is however not always affirmative. The Gelfand pair $(G,H) = (\sym(8),\sym(4)\wr \sym(2))$ induces a commutative rank $3$ association scheme. The character table of this scheme is
\begin{align*}
	\left(
	\begin{tabular}{ccc|c}
		$1$ & $16$ & $18$ &  $1$\\
		$1$ & $-4$ & $3$ & $14$\\
		$1$ & $2$ & $-3$ & $20$\\
	\end{tabular}\right).
\end{align*}
Using the Ratio Bound (see Theorem~\ref{thm:ratiobound}), it is not hard to see that a coclique in either of the two non-trivial orbital graphs is of size at most $7$. Moreover, since $\Lambda(8,H) = \{ [8],[6,2],[4,4] \}$, the second largest in dominance ordering is $[6,2]$ which induces two orbits of size $15$ and $20$. Consequently, we have a negative answer to Question~\ref{qst:main}. 
Our first result, which is a generalization of this example, is stated as follows.
\begin{thm}
	The answer to Question~\ref{qst:main} is negative for the multiplicity-free subgroup $\sym(k)\wr \sym(2)$ of $\sym(2k)$.\label{thm:main1}
\end{thm}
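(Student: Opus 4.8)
The plan is to show that part~\eqref{qst:part1} of Question~\ref{qst:main} already fails for this pair: the orbit $\mathcal{S}$ of $\sym(\lambda)$ meets every non-trivial orbital graph in an edge, so it is a coclique in none of them. First I would make the combinatorial model explicit. The coset space $\Omega = \sym(2k)/(\sym(k)\wr\sym(2))$ is the set of unordered partitions $\{A,A^c\}$ of $[2k]$ into two $k$-sets, so the scheme is the Johnson scheme $\mathcal{J}(2k,k)$ folded by the complementation involution $A\mapsto A^c$. Its relations are indexed by the folded distance $i\in\{0,1,\ldots,\lfloor k/2\rfloor\}$, where $\{A,A^c\}$ and $\{B,B^c\}$ lie in $\mathcal{R}_i$ precisely when $|A\cap B|\in\{i,k-i\}$; since complementation acts on $S^{[2k-j,j]}$ by $(-1)^j$, the surviving eigenspaces are the Specht modules $S^{[2k-2t,2t]}$, whence $\Lambda(2k,H)=\{[2k-2t,2t]:0\le t\le\lfloor k/2\rfloor\}$ and the second-largest partition in dominance is $\lambda=[2k-2,2]$, as in the $n=8$ example. (Here $\lambda\ne[n-1,1]$, which is exactly why this pair sits outside the main theme of the paper.)

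Next I would pin down the two orbits of $\sym(\lambda)=\sym(2k-2)\times\sym(2)$, where the $\sym(2)$ factor permutes the two points $\{2k-1,2k\}$. Sorting $\{A,A^c\}$ by $|A\cap\{2k-1,2k\}|$, there are exactly two orbits: the family $\mathcal{S}_{\mathrm{I}}$ of partitions having one block inside $\{1,\ldots,2k-2\}$ (so the other block contains both $2k-1$ and $2k$), of size $\binom{2k-2}{k}$, and the family $\mathcal{S}_{\mathrm{II}}$ of partitions that separate $2k-1$ and $2k$, of size $\binom{2k-2}{k-1}$; these sum to $\binom{2k-1}{k}=\frac12\binom{2k}{k}=|\Omega|$. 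Either orbit may be the set $\mathcal{S}$ of the Question, so it suffices to prove that neither is a coclique in any $\mathcal{R}_i$ with $i\ge 1$.

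The core step is an explicit exhibition of an internal edge in each orbital graph. For $\mathcal{S}_{\mathrm{II}}$, fix a $(k-1)$-subset $S$ of $\{1,\ldots,2k-2\}$ and set $A=\{2k-1\}\cup S$. Given $i\in\{1,\ldots,\lfloor k/2\rfloor\}$, choose a $(k-1)$-subset $T$ of $\{1,\ldots,2k-2\}$ with $|S\cap T|=i-1$ (possible because $1\le i\le k$), and put $B=\{2k-1\}\cup T$. Then $\{A,A^c\}$ and $\{B,B^c\}$ both lie in $\mathcal{S}_{\mathrm{II}}$, are distinct since $2k-1\in A\cap B$ while $|A\cap B|=i<k$, and satisfy $|A\cap B|=i$, hence lie in $\mathcal{R}_i$; this already covers the self-paired relation $i=k/2$ when $k$ is even. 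Thus $\mathcal{S}_{\mathrm{II}}$ contains an edge of every non-trivial orbital graph. The same argument runs for $\mathcal{S}_{\mathrm{I}}$ by taking two $k$-subsets $A,B\subseteq\{1,\ldots,2k-2\}$ with $|A\cap B|=k-i$; since $k-i\ge\lceil k/2\rceil\ge 2$ for $k\ge 3$, such a pair is realizable inside $\{1,\ldots,2k-2\}$ and again produces an $\mathcal{R}_i$-edge. Consequently, for $k\ge 3$ both orbits fail to be cocliques, so the answer to Question~\ref{qst:main}\eqref{qst:part1}, and hence to the Question, is negative.

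Finally I would connect this to the Ratio-bound computation used for $n=8$, where one checks that the least eigenvalue of each orbital graph forces every coclique to be far smaller than $\min(|\mathcal{S}_{\mathrm{I}}|,|\mathcal{S}_{\mathrm{II}}|)$. The genuine obstacle to pushing that spectral route through for all $k$ is controlling the least eigenvalue of each $\mathcal{R}_i$: these are the sums $E_{2t}(i)+E_{2t}(k-i)$ of Eberlein polynomials, whose signs and magnitudes are delicate to bound uniformly in $i$ and $t$. The combinatorial argument above sidesteps this entirely, so the hard part becomes merely the bookkeeping that every relation $\mathcal{R}_i$ is hit and the handling of the degenerate case $k=2$, where $\mathcal{S}_{\mathrm{I}}$ collapses to a single vertex; there one checks directly that the distinguished orbit is $\mathcal{S}_{\mathrm{II}}$, for which the construction still supplies the required edge.
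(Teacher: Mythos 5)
Your proposal is correct and takes essentially the same route as the paper: model the cosets as unordered partitions $\{A,A^{c}\}$ of $[2k]$ into two $k$-sets, identify the relations ($\mathcal{O}_i$ joins pairs with $|A\cap B|\in\{i,k-i\}$) and the second-largest partition $[2k-2,2]$, and then exhibit explicit edges of every non-trivial orbital graph inside the orbit(s) of $\sym([2k-2,2])$, so that part~(a) of the question fails. If anything, you are more thorough than the paper, whose theorem and proof only produce such edges inside the orbit where $2k-1,2k$ share a block (and whose explicit sets $A,B$ collide with $\{2k-1,2k\}$ when $i=\lfloor k/2\rfloor$), whereas your constructions cover both orbits and all $1\le i\le\lfloor k/2\rfloor$, with only the unavoidable degenerate case $k=2$ (where the rank is $2$ and the together-orbit is a singleton, hence trivially a coclique) left to a convention about which orbit the question distinguishes.
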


\renewcommand{\arraystretch}{1.25} 

\begin{table}[t]
	{\small
		\begin{center}
		\begin{tabular}{lllccc}
			Line & Group & & $n$ & Index & Rank\\
			1&$\sym(k) \times \sym(n-k)$ & $(2k \leq n)$ & $n$ & ${n \choose k}$ & $k+1$ \\
			2&$\alt(k) \times \sym(n-k)$ & $(2k \leq n$, $k \neq 2)$ & $n$ & $2{n \choose k}$ &  $k+3$\\
			3&$\sym(k) \times \alt(n-k)$ & $(2k \leq n$, $k \neq n-2)$ & $n$ & $2{n \choose k}$ &  $k+3$\\
			4& $\alt(k) \times \alt(n-k)$ & $(k\geq 3, 2k\leq n-2)$ &$n$ & $4\binom{n}{k}$ & $2k+6$\\
			5& $\left(\sym(k)\times \sym(n-k)\right)\cap \alt(n)$ & $(2k\leq n, (n,k)\neq (4,2))$ & $n$ & $2\binom{n}{k}$ & $2k+2$\\
			6 & $\left(\sym(2)\wr \sym(k)\right) \times \sym(1)$ & &$2k+1$ & & \\
		\end{tabular}
		\caption{Multiplicity-free intransitive groups \label{table:intrans2}}
	\end{center}}
\end{table}

In this paper, we initiate the study of Question~\ref{qst:main} by considering the Gelfand pair $(\sym(n),H)$ for which the second largest partition in dominance ordering is $[n-1,1]$. The multiplicity-free subgroups of $\sym(n)$ were classified in \cite{godsil2010multiplicity}. We will focus on the families of multiplicity-free subgroups of $\sym(n)$ given in Table~\ref{table:intrans2} in this work. 

We state our next main result.
\begin{thm}
	The answer to Question~\ref{qst:main} is affirmative for the Gelfand pairs $(\sym(n),H)$, where $H$ is the group in lines~2-6 of Table~\ref{table:intrans2}.\label{thm:main2}
\end{thm}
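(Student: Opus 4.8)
The plan is to verify Question~\ref{qst:main} separately for each of the families in lines~2--6 of Table~\ref{table:intrans2}, exploiting the fact that in every case the target partition is $\lambda = [n-1,1]$, whose associated orbit $\mathcal{S}_\lambda$ is governed by a single coordinate of the natural action of $\sym(n)$ on $[n]$. Before treating the cases individually, I would establish a reduction lemma: for each $H$ in the table, I relate the coset space $G/H$ and its orbital scheme to the Johnson scheme $\mathcal{J}(n,k)$ (line~1), for which Question~\ref{qst:main} is already known to hold by the discussion preceding the statement. Concretely, each of the groups in lines~2--5 is obtained from $\sym(k)\times\sym(n-k)$ by passing to an alternating or determinant-one subgroup of index~$2$ or~$4$, so there is a natural covering map $G/H \to \binom{[n]}{k}$ whose fibres are indexed by the extra $\ZZ_2$ or $\ZZ_2\times\ZZ_2$ quotient data. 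I would describe the orbital graphs of $G/H$ as ``lifts'' of the orbital graphs of $\mathcal{J}(n,k)$ together with new relations recording the sign information, and compute their eigenvalues via the tensor/inflation of the Johnson eigenvalues with the one-dimensional characters of the quotient.

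The decisive ingredient is to identify, among these lifted orbital graphs, one in which $\mathcal{S}_\lambda$ is a coclique and to check that the eigenvalue on the $[n-1,1]$-module is the least. Since $\lambda=[n-1,1]$ sits at the top of $\Lambda(n,H)$ just below $[n]$, the $[n-1,1]$-module appears already in the permutation character of the Johnson scheme, and its eigenvalue there is exactly the one witnessing the Hoffman (Ratio) bound (Theorem~\ref{thm:ratiobound}) for the canonical EKR coclique. Thus I would take the Kneser-type orbital graph (the relation $\mathcal{O}_k$ of maximally disjoint $k$-sets, appropriately lifted so as not to connect pairs inside $\mathcal{S}_\lambda$) and show that its least eigenvalue is inherited unchanged from $K(n,k)$ and is afforded by the $[n-1,1]$-constituent. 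The sign characters contribute eigenvalues on the new modules that I must verify are not smaller; because those modules correspond to partitions strictly dominated by $[n-1,1]$, their Johnson eigenvalues are larger in absolute value only in a controlled way, and I expect the ordering to come out in favour of $[n-1,1]$ after a direct comparison.

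Line~6, the group $\left(\sym(2)\wr\sym(k)\right)\times\sym(1)$ inside $\sym(2k+1)$, I would treat as a variant of the perfect-matching scheme with one fixed point adjoined. Here the plan is to relate $G/H$ to the perfect matchings of $K_{2k}$ on the remaining $2k$ points, indexed by where the singleton lands, and to reuse the eigenvalue analysis of the perfect-matching scheme that underlies Theorem~\ref{thm:main1}'s positive analogue; the orbit $\mathcal{S}_\lambda$ is the set of cosets whose distinguished point is fixed, and the candidate orbital graph connects cosets whose matchings are ``maximally apart.'' I would confirm the coclique property combinatorially and then the eigenvalue claim via the known branching of $\sym(2k+1)$-characters restricted from the wreath structure.

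The main obstacle, I expect, will be the eigenvalue-ordering step of part~\eqref{qst:part2} for the index-$2$ and index-$4$ families (lines~2--5): establishing a coclique is largely combinatorial, but proving that the eigenvalue on the $[n-1,1]$-module is genuinely the \emph{least} requires controlling all eigenvalues of the lifted Kneser graph, including those living on the ``sign-twisted'' modules $S^\mu\otimes(\text{sign})$ that have no counterpart in the plain Johnson scheme. Bounding these twisted eigenvalues and showing none dips below the $[n-1,1]$ value is where a careful, case-by-case computation (likely invoking explicit Johnson eigenvalue formulas and the specific index structure) will be unavoidable.
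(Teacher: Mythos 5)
Your outline for lines~2--5 (model $G/H$ as a sign-decorated cover of the Johnson scheme, pick a lifted Kneser relation, run the Ratio Bound of Theorem~\ref{thm:ratiobound}) does match the paper's strategy in broad shape, but the two points you leave open are exactly where the proof lives, and your stated expectations at both of them are false or unsupported. First, your hope that the eigenvalues on the sign-twisted modules ``come out in favour of $[n-1,1]$'' is not true for all natural lifts: in the line-5 scheme the paper exhibits \emph{two} Kneser-type orbital graphs, and one of them, $\mathcal{K}^-(n,k)=K_2\times K(n,k)$, is bipartite with least eigenvalue $-\binom{n-k}{k}$ afforded by the sign module $[1^n]$, so part~\eqref{qst:part2} of Question~\ref{qst:main} \emph{fails} for that graph even though the orbit $\mathcal{S}$ is a coclique in it; only the other lift, $\mathcal{K}^+(n,k)$ (two disjoint copies of $K(n,k)$), works. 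So a twisted module genuinely can dip below the $[n-1,1]$ value, and the proof must select the correct orbital graph rather than bound all lifts uniformly. The paper then avoids all per-module estimates by a structural fact your plan is missing: for lines~2--4 the chosen Kneser-type graph is exactly $K_2\bowtie K(n,k)$ (or two disjoint copies of it), i.e.\ its adjacency matrix is $J_2\otimes A(K(n,k))$, so its full spectrum is $\{0\}\cup\{2\theta:\ \theta\in \operatorname{spec} K(n,k)\}$ with no further computation. (This also shows your claim that the least eigenvalue is ``inherited unchanged'' from $K(n,k)$ is wrong in lines~2--4: it doubles, although the ratio to the degree, and hence the Hoffman bound, is preserved.) Second, and independently, you have no mechanism for proving that the $[n-1,1]$-module is the one that \emph{affords} the critical eigenvalue: knowing the number $-2\binom{n-k-1}{k-1}$ lies in the spectrum says nothing about which Specht module carries it. The paper supplies this with Theorem~\ref{thm:cocliques-module}: whenever the $\sym([n-1,1])$-orbit $\mathcal{S}$ is a coclique of an orbital graph, the equitable partition $\{\mathcal{S},\Omega\setminus\mathcal{S}\}$ lifts to the eigenvector $v_{\mathcal{S}}-\frac{|\mathcal{S}|}{|\Omega|}\mathbf{1}$ with eigenvalue $-k_i|\mathcal{S}|/(|\Omega|-|\mathcal{S}|)$, and the $\CC\sym(n)$-module generated by its translates embeds into the module of $\ind{\sym([n-1,1])}{\sym(n)}=\chi^{[n]}+\chi^{[n-1,1]}$, forcing it to be $S^{[n-1,1]}$. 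Without this lemma (or an explicit character computation via Theorem~\ref{thm:general-eigenvalues}), part~\eqref{qst:part2} remains unproved in every one of your cases.

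For line~6 your reduction runs in the wrong direction. Deleting the singleton of a quasi-perfect matching of $K_{2k+1}$ yields a perfect matching of a ground set that \emph{varies} with the singleton, so there is no map to a fixed set $\mathcal{P}_k$ of matchings of $K_{2k}$ and no graph homomorphism to exploit. The paper instead adjoins a new vertex $2k+2$ and matches the singleton to it, producing a bijection $\mathcal{Q}_k\to\mathcal{P}_{k+1}$ which is an isomorphism from $Q(k)$ (the union of derangement-type orbital graphs) onto the perfect-matching derangement graph $P(k+1)$; this transports the full EKR theorem for perfect matchings of $K_{2k+2}$ (Theorem~\ref{thm:pm}), including the identification of $-(2k-2)!!$ as the least eigenvalue afforded by the $[2k,2]$-module, down to the quasi-perfect-matching scheme. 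Your proposed appeal to branching is not a substitute: the branching rule explains why the extremal orbits correspond to $[2k,1]$ and $[2k-1,2]$, but it produces neither the coclique bound nor the eigenvalue ordering.
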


\subsection{Structure of the paper}
This paper is organized as follows. In the Section~\ref{sect:background} and Section~\ref{sect:association-scheme}, we give some background results on permutation groups, spectral graph theory techniques and association schemes. In Section~\ref{sect:proof1}, we give a proof of Theorem~\ref{thm:main1}. In  Section~\ref{sect:line1}, we review the proof that Question~\ref{qst:main} is true for $(\sym(n),\sym(k)\times \sym(n-k))$. The proof of Theorem~\ref{thm:main2} is spread in Section~\ref{sect:line2-3}, Section~\ref{sect:line4}, Section~\ref{sect:line5}, and Section~\ref{sect:qpm}. We conclude this paper by stating some interesting questions and problems in Section~\ref{sect:conclusion}.

\section{Background results}\label{sect:background}
\subsection{Permutation groups}
Let $G$ be a group and $H\leq G$ be a subgroup. The group $G$ acts on $G/H$ through $g (xH) = gxH$, for any $g\in G$ and $xH\in G/H$. It is not hard to see that this action is transitive since for any $xH,yH \in G/H$, we have $(yx^{-1})(xH) = yH$. We will denote the stabilizer of $xH$ in this action of $G$ on $G/H$ by $\stab{G}{xH}$. It is clear that $\stab{G}{H} = H$. It is well known that any finite transitive group $G\leq \sym(\Omega)$ is permutation equivalent to $G$ in its action on $G/H$, where $H$ is the stabilizer of any $\omega \in \Omega$. This correspondence enables us to switch between the cosets or a set of specific combinatorial objects, whichever is easier. For any $\omega \in \Omega$, we let $\stab{G}{\omega}$ be the stabilizer of $\omega$ in $G$.

The transitive group $G$ acting on $G/H$ induces an action on $G/H \times G/H$ by componentwise multiplication. The orbits of this action are called the \emph{orbitals} of $G$ and the \emph{rank} of $G$ is the number of orbitals. By transitivity of $G$, the set $\{ (xH,xH) :\ xH \in G/H \}$ is always an orbital of $G$. Other orbitals of $G$ must partition the set $\{ (xH,yH) :\ xH \neq yH \in G/H \}$. If $\mathcal{O}$ is an orbital of $G$ such that $\mathcal{O} = \left\{ (yH,xH):\ (xH,yH) \in \mathcal{O} \right\}$, then we say that $\mathcal{O}$ is \emph{self-paired}, otherwise, it is called \emph{non-self-paired}. 

The next lemma gives a well-known correspondence between orbitals and \emph{suborbits} (i.e., orbits of a point stabilizer). Its proof can be found in any standard textbook on permutation groups.
\begin{lem}
	The map which takes any orbital $\mathcal{O}$ to the suborbit $\{xH:\ (H,xH) \in \mathcal{O}\}$ is a bijection.
\end{lem}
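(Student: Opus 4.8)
The plan is to verify directly that the map $\Phi$ sending an orbital $\mathcal{O}$ to the set $\Phi(\mathcal{O}) = \{xH :\ (H,xH)\in \mathcal{O}\}$ is a well-defined bijection onto the set of suborbits. I would organize the argument into three steps---well-definedness, injectivity, and surjectivity---all resting on the single observation that an element $g\in G$ fixes the coset $H\in G/H$ if and only if $g\in \stab{G}{H}=H$.

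First I would show that $\Phi(\mathcal{O})$ is nonempty and is precisely one $H$-orbit on $G/H$. Nonemptiness follows from transitivity: given any $(aH,bH)\in \mathcal{O}$, choosing $g\in G$ with $gaH = H$ yields $(H,gbH) = g\cdot(aH,bH)\in \mathcal{O}$, so $gbH\in \Phi(\mathcal{O})$. To see that $\Phi(\mathcal{O})$ is $H$-invariant, note that for $h\in H$ and $xH\in \Phi(\mathcal{O})$ we have $h\cdot(H,xH) = (hH,hxH) = (H,hxH)$, which lies in $\mathcal{O}$ since $\mathcal{O}$ is a $G$-orbit; hence $hxH\in \Phi(\mathcal{O})$. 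Conversely, if $xH,yH\in \Phi(\mathcal{O})$, then $(H,xH)$ and $(H,yH)$ lie in the same $G$-orbit, so some $g\in G$ satisfies $(gH,gxH) = (H,yH)$; from $gH = H$ we deduce $g\in H$ and $gxH = yH$, so $xH$ and $yH$ lie in the same $H$-orbit. Thus $\Phi(\mathcal{O})$ is a single suborbit.

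Injectivity is immediate from the fact that the orbitals partition $G/H\times G/H$: if $\Phi(\mathcal{O}_1) = \Phi(\mathcal{O}_2)$, then picking $xH$ in this common (nonempty) set places $(H,xH)$ in both $\mathcal{O}_1$ and $\mathcal{O}_2$, forcing $\mathcal{O}_1 = \mathcal{O}_2$ since distinct orbitals are disjoint. For surjectivity, given any suborbit $\Delta$ I would pick $xH\in \Delta$ and let $\mathcal{O}$ be the $G$-orbit of $(H,xH)$; by the computation above $\Phi(\mathcal{O})$ equals the $H$-orbit of $xH$, which is exactly $\Delta$.

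I expect the only point requiring genuine care to be the verification that $\Phi(\mathcal{O})$ is a single $H$-orbit rather than merely a union of $H$-orbits. This is where the identity $\stab{G}{H}=H$ is used in its sharp form: the group element transporting $(H,xH)$ to $(H,yH)$ is forced to stabilize the first coordinate $H$ and hence to lie in $H$, which is precisely what collapses the fiber over $H$ to one orbit of the point stabilizer. Everything else is bookkeeping with the partition and transitivity properties already recorded in the excerpt.
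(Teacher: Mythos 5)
Your proof is correct and complete. The paper itself does not include a proof of this lemma (it refers the reader to standard textbooks on permutation groups), and your argument---well-definedness of the map via the transporter computation that forces the group element carrying $(H,xH)$ to $(H,yH)$ to lie in $\stab{G}{H}=H$, injectivity from the fact that distinct orbitals are disjoint, and surjectivity by taking the $G$-orbit of $(H,xH)$ for $xH$ in a given suborbit---is precisely the standard textbook proof being invoked.
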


For any $g \in G$, define
\begin{align}
	\fix_H^G(g) = \left|\left\{ xH \in G/H :\ g \left(xH\right) = xH \right\}\right|.
\end{align}

The \emph{permutation character} of the action of $G$ on $G/H$ is the character $\fix_H^G : G \to \CC$ which takes any $g\in G$ to $\fix_H^G(g)$.
The next lemma is a well-known result in the theory of permutation groups.
\begin{lem}[Orbit Counting lemma]
	The number of orbits of $G$ acting on $G/H$ is 
	\begin{align*}
		\frac{1}{|G|} \sum_{g\in G} \fix_H^G(g).
	\end{align*}\label{lem:orbit-counting-lemma}
\end{lem}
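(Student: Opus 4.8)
The plan is to prove the displayed identity by double-counting the incidence set
\[
\Phi := \left\{ (g,xH) \in G \times G/H :\ g(xH) = xH \right\},
\]
and then converting the coset-side of the count into orbit data via the orbit-stabilizer theorem. This is the standard argument behind the Cauchy--Frobenius (Burnside) counting formula, of which the present statement is the special case for the action of $G$ on $G/H$; I would therefore prove the formula in the general form, so that it specializes immediately (and remains adaptable should one later count orbits of a subgroup acting on $G/H$).

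First I would count $|\Phi|$ by fixing the group element. For each $g\in G$, the cosets $xH$ with $(g,xH)\in \Phi$ are exactly those fixed by $g$, and by the definition of $\fix_H^G$ their number is $\fix_H^G(g)$; summing over $g$ gives $|\Phi| = \sum_{g\in G}\fix_H^G(g)$. Next I would count $|\Phi|$ by fixing the coset. For each $xH\in G/H$, the elements $g$ with $(g,xH)\in \Phi$ form precisely the stabilizer $\stab{G}{xH}$, so $|\Phi| = \sum_{xH\in G/H}\left|\stab{G}{xH}\right|$. To evaluate this second sum I would group the cosets according to the orbits $O_1,\dots,O_r$ of $G$ on $G/H$ and invoke orbit-stabilizer, namely $\left|\stab{G}{xH}\right| = |G|/|O_i|$ whenever $xH\in O_i$. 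Each orbit then contributes
\[
\sum_{xH\in O_i}\frac{|G|}{|O_i|} = |G|,
\]
so that $|\Phi| = r\,|G|$. Equating the two counts and dividing by $|G|$ yields $r = \frac{1}{|G|}\sum_{g\in G}\fix_H^G(g)$, which is the claim.

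There is no substantive obstacle here: the result is elementary, and the only steps requiring any care are recognizing $\fix_H^G(g)$ as a genuine fixed-point count (immediate from its definition) and the orbit-stabilizer bookkeeping that makes every orbit contribute exactly $|G|$. I note in passing that a purely character-theoretic route is also available, since $\fix_H^G$ is the permutation character $\mathbf{1}_H^G$ and the number of orbits equals the inner product $\langle \fix_H^G,\mathbf{1}_G\rangle = \frac{1}{|G|}\sum_{g\in G}\fix_H^G(g)$; however, that reformulation ultimately rests on the same double count, so I would present the elementary incidence-counting argument as the primary proof.
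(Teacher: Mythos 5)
Your proof is correct and complete. The paper itself offers no proof of this lemma --- it is stated as a well-known result (the Cauchy--Frobenius/Burnside lemma) with the proof deferred to standard references --- so there is no in-paper argument to compare against; your double count of the incidence set $\{(g,xH) \in G \times G/H :\ g(xH)=xH\}$ combined with the orbit--stabilizer theorem is precisely the classical argument. One remark in your favour: proving the formula for a general, not necessarily transitive, action is exactly the right level of generality here. For $G$ acting on $G/H$ itself the orbit count is trivially $1$ (the action is transitive, as the paper notes), and the real use of the lemma is in Corollary~\ref{cor:number-of-orbitals}, where it is applied to the induced action on $G/H \times G/H$, which is not transitive; your general statement specializes to both situations without further work.
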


Since the action of $G$ on $\Omega$ induces an action on $\Omega \times \Omega$, the number of orbitals of $G$ (i.e., the orbits on $\Omega \times \Omega$) can be easily computed using Lemma~\ref{lem:orbit-counting-lemma}.
\begin{cor}
	The number of orbitals of $G$ acting on $G/H$ is 
	\begin{align*}
		\frac{1}{|G|} \sum_{g\in G} \fix_H^G(g)^2.
	\end{align*}\label{cor:number-of-orbitals}
\end{cor}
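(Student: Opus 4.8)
The plan is to apply the Orbit Counting lemma (Lemma~\ref{lem:orbit-counting-lemma}) not to the action of $G$ on $\Omega = G/H$ itself, but to the induced diagonal action of $G$ on $\Omega \times \Omega$ given by $g \cdot (xH, yH) = (g\,xH,\, g\,yH)$. By the definition recalled just before the corollary, the orbitals of $G$ are exactly the orbits of this action on $\Omega \times \Omega$, so the number of orbitals is precisely the number of orbits that the lemma computes.

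First I would identify the relevant permutation character. For each $g \in G$, let $\Fix_{\Omega \times \Omega}(g)$ denote the set of pairs fixed by $g$ under the diagonal action. The key observation is that a pair $(xH, yH)$ is fixed by $g$ if and only if $g\,xH = xH$ \emph{and} $g\,yH = yH$; that is, both coordinates must be fixed independently. Hence the set of fixed pairs is the Cartesian product of the set of fixed cosets with itself, which yields
\[
	|\Fix_{\Omega \times \Omega}(g)| = \fix_H^G(g)^2.
\]

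Applying Lemma~\ref{lem:orbit-counting-lemma} to the diagonal action of $G$ on $\Omega \times \Omega$ then gives that the number of orbitals equals
\[
	\frac{1}{|G|} \sum_{g \in G} |\Fix_{\Omega \times \Omega}(g)| = \frac{1}{|G|} \sum_{g \in G} \fix_H^G(g)^2,
\]
which is exactly the claimed formula.

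There is essentially no serious obstacle here: the only point requiring care is the factorization of the fixed-point count, which follows immediately from the componentwise definition of the diagonal action. Phrased character-theoretically, this is the familiar fact that the permutation character of the product action is the pointwise product $\fix_H^G \cdot \fix_H^G$ of the original permutation character with itself, so the corollary is simply the Orbit Counting lemma applied to this product action rather than to the original one.
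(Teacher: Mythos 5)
Your proof is correct and follows exactly the route the paper intends: the paper presents this corollary as an immediate consequence of Lemma~\ref{lem:orbit-counting-lemma} applied to the induced diagonal action on $\Omega \times \Omega$, which is precisely your argument, with the factorization $|\Fix_{\Omega\times\Omega}(g)| = \fix_H^G(g)^2$ being the only computation needed.
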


Let us now write these two formulae in terms of the natural inner product on the space of characters. Let $G$ be a finite group. We will denote the complete set of complex irreducible characters of $G$ by $\operatorname{Irr}(G)$. Recall that given $\phi,\psi \in \operatorname{Irr}(G)$, a natural inner product between $\phi$ and $\psi$ is given by
\begin{align*}
	\langle \phi,\psi \rangle = \frac{1}{|G|} \sum_{g\in G} \phi(g) \overline{\psi(g)}.
\end{align*}

It is not hard to see that, in fact, $\fix_H^G = \ind{H}{G}$. Using Lemma~\ref{lem:orbit-counting-lemma}, it is clear that the trivial character $\mathbf{1}_G$ of $G$ is always a constituent of $\ind{H}{G}$. Moreover, its multiplicity is equal to $1$ since
\begin{align*}
	\left\langle \ind{H}{G},\triv{G} \right\rangle = \frac{1}{|G|} \sum_{g\in G} \fix(g) = 1.
\end{align*}
 Using Corollary~\ref{cor:number-of-orbitals}, it is easy to see that the rank of the transitive group $G$ (or the corresponding association scheme) is $\left\langle\ind{H}{G},\ind{H}{G} \right\rangle$.

\subsection{Equitable partitions}
Let $X= (V,E)$ be a graph. A partition $\pi = \{ V_1,V_2,\ldots,V_k \}$ of the vertex set of $X$ is called \emph{equitable} if for any $1\leq i,j\leq k$ there exists a number $a_{ij}$ such that the number of neighbours in $V_j$ of any vertex of $ V_i$ is equal to $a_{ij}$. If $\pi$ is an equitable partition of $X$, then the \emph{quotient graph } $X/\pi$ is the directed multigraph with vertex set equal to the elements of $\pi$ and for any $i,j\in \{1,2,\ldots,k\}$, there are exactly $a_{ij}$ arcs from $V_i$ to $V_j$. The \emph{quotient matrix} of an equitable partition $\pi$ is the $k\times k$ matrix $A(X/\pi)$ indexed in its rows and columns by $\{1,2,\ldots,k\}$ and whose $ij$-entry is $a_{ij}$.

Equitable partitions are important in spectral graph theory due to the following result.
\begin{thm}\cite[Lemma~2.2.2]{godsil2016erdos}
	If $\pi$ is an equitable partition of $X$, then the characteristic polynomial of $A(X/\pi)$ divides the characteristic polynomial of the adjacency matrix of $X$. In particular, an eigenvalue of $A(X/\pi)$ is an eigenvalue of $X$.
\end{thm}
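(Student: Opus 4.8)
The plan is to encode the partition by its characteristic matrix and then reduce the statement to a standard fact about invariant subspaces. Write $V(X) = \{1,\ldots,n\}$, let $A = A(X)$ be the adjacency matrix of $X$, and set $B = A(X/\pi)$, so that $B$ is the $k \times k$ matrix with entries $a_{ij}$. First I would introduce the $n \times k$ characteristic matrix $P$ whose $i$-th column $p_i$ is the indicator vector of the cell $V_i$; that is, $P_{v,i} = 1$ if $v \in V_i$ and $P_{v,i} = 0$ otherwise. Since the cells of $\pi$ are nonempty and pairwise disjoint, the columns of $P$ have disjoint supports and are therefore linearly independent, so $P$ has full column rank $k$ and trivial kernel.

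The key step is the matrix identity $AP = PB$. To verify it I would compare the $(v,j)$-entries of the two sides: $(AP)_{v,j} = \sum_{w \in V_j} A_{v,w}$ counts the neighbours of $v$ lying in the cell $V_j$, which by the equitability hypothesis equals $a_{ij}$ whenever $v \in V_i$; on the other hand $(PB)_{v,j} = \sum_l P_{v,l} B_{l,j} = a_{ij}$ for the unique index $i$ with $v \in V_i$. Hence the two matrices agree. This single identity drives both conclusions.

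For the ``in particular'' statement, I would take any eigenvalue $\theta$ of $B$ with eigenvector $u \neq 0$ and compute $A(Pu) = (AP)u = (PB)u = P(Bu) = \theta(Pu)$; since $P$ has trivial kernel, $Pu \neq 0$, so $\theta$ is an eigenvalue of $A$. For the stronger divisibility statement, I would read $AP = PB$ as saying that the column space $W = \mathrm{col}(P)$ is $A$-invariant and that $Ap_i = \sum_j a_{ji}\,p_j$, so that in the basis $\{p_1,\ldots,p_k\}$ of $W$ the operator $A|_W$ is represented exactly by $B$. Extending $\{p_1,\ldots,p_k\}$ to a basis of $\mathbb{C}^n$ conjugates $A$ to a block upper-triangular matrix whose top-left block is $B$, whence $\det(xI - A) = \det(xI - B)\cdot\det(xI - M)$ for the complementary quotient block $M$. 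This yields $\det(xI - B) \mid \det(xI - A)$, as required.

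I expect the only genuine subtlety to be the invariant-subspace bookkeeping in the divisibility step: with the row/column conventions coming from the paper's definition of $a_{ij}$, one must check carefully that the matrix representing $A|_W$ in the basis of columns of $P$ is precisely $B$, so that it is $B$ — and not some unrelated matrix — that appears as the diagonal block. Everything else reduces to the direct verification of $AP = PB$ and to the standard factorization of the characteristic polynomial across an invariant subspace.
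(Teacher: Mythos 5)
Your proof is correct: the identity $AP=PB$ for the characteristic matrix $P$, the lifting of eigenvectors via the trivial kernel of $P$, and the block upper-triangularization of $A$ over the $A$-invariant column space of $P$ (in which $A$ restricted to that subspace is represented exactly by $B=A(X/\pi)$, thanks to the convention check $Ap_i=\sum_j a_{ji}p_j$) together give both the divisibility of characteristic polynomials and the eigenvalue statement. The paper itself offers no proof of this theorem — it is quoted from Lemma~2.2.2 of the Godsil--Meagher monograph — and your argument is precisely the standard one given in that reference, so the proposal matches the intended proof.
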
\label{thm:eigenvalues-equitable-partitions}

The above theorem enables one to determine certain eigenvalues of the graph $X$ through $X/\pi$. Given an eigenvalue of $A(X/\pi)$ and a corresponding eigenvector $v$, one can lift $v$ into an eigenvector of $X$. For any equitable partition $\pi = \{ V_1,V_2,\ldots,V_k \}$, define the $|V(X)|\times k$ matrix $P_\pi$ whose rows and columns are respectively indexed by vertices of $X$ and the partition $\pi$, and whose $(x,V_j)$-entry is equal to $1$ if $x\in V_j$ and $0$ otherwise. The matrix $P_\pi$ is called the \emph{characteristic matrix} of $\pi$. The next lemma shows the importance of the matrix $P_\pi$.
\begin{lem}
	If $v$ is an eigenvector of $A(X/\pi)$ corresponding to the eigenvalue $\theta$, then $P_\pi v$ is an eigenvector of $X$ with the same eigenvalue.\label{lem:eigenvector-lift}
\end{lem}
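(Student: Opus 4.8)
The plan is to reduce everything to a single intertwining identity between the adjacency matrix $A$ of $X$ and the quotient matrix $A(X/\pi)$, and then to apply both sides to $v$. Writing $B = A(X/\pi)$ for brevity, I claim that $A P_\pi = P_\pi B$. Granting this, if $Bv = \theta v$ then
\begin{align*}
A(P_\pi v) = (A P_\pi) v = (P_\pi B) v = P_\pi(\theta v) = \theta (P_\pi v),
\end{align*}
so $P_\pi v$ is an eigenvector of $A$ with eigenvalue $\theta$, provided that $P_\pi v \neq 0$.

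First I would verify the identity $A P_\pi = P_\pi B$ entrywise, indexing the columns of both sides by $\{1,2,\ldots,k\}$. Fix a vertex $x$ with $x \in V_i$ and an index $j$. The $(x,j)$-entry of $A P_\pi$ is $\sum_{y \in V(X)} A_{x,y}(P_\pi)_{y,j} = \sum_{y \in V_j} A_{x,y}$, which is precisely the number of neighbours of $x$ lying in $V_j$; since $\pi$ is equitable and $x \in V_i$, this number equals $a_{ij}$. On the other hand, the row of $P_\pi$ indexed by $x$ is the $i$-th standard basis vector, so the $(x,j)$-entry of $P_\pi B$ is $B_{ij} = a_{ij}$. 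As $x$ and $j$ were arbitrary, the two matrices coincide.

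It then remains only to confirm that $P_\pi v$ is genuinely nonzero, so that it is an honest eigenvector rather than the zero vector. By the definition of $P_\pi$, for any vertex $x \in V_i$ we have $(P_\pi v)_x = v_i$; that is, $P_\pi v$ assigns to each vertex the coordinate of $v$ attached to its part. Since $v$ is an eigenvector it is nonzero, so some coordinate $v_i$ is nonzero, and the corresponding part $V_i$ is nonempty because $\pi$ is a partition; hence $P_\pi v$ has a nonzero entry. Combining this with the computation above completes the argument.

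There is no substantial obstacle in this proof: its entire content is the intertwining relation $A P_\pi = P_\pi A(X/\pi)$, whose verification is a direct consequence of the equitable property, and the only subtlety worth flagging is the nonvanishing of $P_\pi v$.
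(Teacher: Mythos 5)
Your proof is correct: the intertwining identity $AP_\pi = P_\pi A(X/\pi)$, verified entrywise from the equitable property, together with the nonvanishing of $P_\pi v$, is exactly the standard argument, and it is the same one given in the reference the paper cites for this lemma (Godsil--Meagher, Lemma~2.2.1), since the paper itself offers no inline proof. Your explicit check that $P_\pi v \neq 0$ is a nice touch that the standard treatment often leaves implicit.
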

The proof of this lemma is given in \cite[Lemma~2.2.1]{godsil2016erdos}.

\subsection{Hoffman's Ratio Bound}
We recall the well-known Ratio Bound which is due to Hoffman. Let $X = (V,E)$ be a $k$-regular graph. It is well known that $k$ is an eigenvalue of $X$. The vector of all-ones, denoted by $\mathbf{1}$, is an eigenvector of $X$ corresponding to the eigenvalue $k$ and it is an easy exercise to prove that the multiplicity of $k$ is the number of components of $X$. For any $S\subset V(X)$, the \emph{characteristic vector} of $S$ is the vector $v_S \in \RR^{|V(X)|}$ which is indexed by vertices of $X$ and such that the $x$-entry of $v_S$ is equal to $1$ if $x\in S$, and $0$ otherwise. 

We state the Ratio Bound in the next theorem.
\begin{thm}[Ratio Bound]
	If $X = (V,E)$ is a $k$-regular graph with least eigenvalue $\tau$, then the independence number of $X$ satisfies
	\begin{align*}
		\alpha(X) \leq \frac{|V(X)|}{1-\frac{k}{\tau}}.
	\end{align*}
	A set $S$ is a coclique for which equality holds if and only if $v_S-\frac{|S|}{|V(X)|}\mathbf{1}$ is an eigenvector of the eigenvalue $\tau$.\label{thm:ratiobound}
\end{thm}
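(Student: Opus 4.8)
The plan is to run the standard variational argument on the adjacency matrix $A$ of $X$. Write $N = |V(X)|$ and $s = |S|$ for a coclique $S$, and let $v_S$ be its characteristic vector. Because $X$ is $k$-regular, $\mathbf{1}$ is an eigenvector of $A$ with eigenvalue $k$, and $k$ is in fact the largest eigenvalue of $A$ (the spectral radius of a $k$-regular graph); so fix an orthonormal basis $u_0 = \mathbf{1}/\sqrt{N}, u_1, \ldots, u_{N-1}$ of eigenvectors of $A$ with eigenvalues $k = \theta_0 \ge \theta_1 \ge \cdots \ge \theta_{N-1} = \tau$, and expand $v_S = \sum_{i=0}^{N-1} c_i u_i$. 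Three elementary facts drive the proof: first, $c_0 = \langle v_S, u_0\rangle = s/\sqrt{N}$; second, $\sum_i c_i^2 = \|v_S\|^2 = s$ since $v_S$ has entries in $\{0,1\}$ with exactly $s$ ones; and third, $v_S^\top A v_S = 0$ because $S$ is independent, which in the eigenbasis reads $\sum_i \theta_i c_i^2 = 0$.

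Next I would isolate the $u_0$-term and bound the rest from below. Splitting off $i = 0$ gives $k\,s^2/N + \sum_{i\ge 1}\theta_i c_i^2 = 0$. Since $\theta_i \ge \tau$ for every $i$ and each $c_i^2 \ge 0$, we have $\sum_{i \ge 1}\theta_i c_i^2 \ge \tau \sum_{i\ge 1} c_i^2 = \tau\left(s - s^2/N\right)$, where $\sum_{i\ge1}c_i^2 = s - s^2/N$ follows from the first two facts. Combining, $-k\,s^2/N \ge \tau\left(s - s^2/N\right)$, i.e. $(\tau-k)s^2/N \ge \tau s$. Here one must keep track of signs: $\tau < 0$ (as $X$ has an edge) and $\tau - k < 0$, so after clearing $s > 0$ we get $(\tau-k)s \ge \tau N$, and dividing by the negative quantity $\tau - k$ reverses the inequality to give $s \le \tau N/(\tau - k) = N/(1 - k/\tau)$, which is the claimed bound on $\alpha(X)$.

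Finally, for the equality characterization I would trace exactly where the single inequality above was used. Equality holds if and only if $\sum_{i\ge1}(\theta_i - \tau)c_i^2 = 0$; since every summand is nonnegative, this forces $c_i = 0$ whenever $\theta_i \ne \tau$. Equivalently, $v_S - \tfrac{s}{N}\mathbf{1} = \sum_{i\ge 1} c_i u_i$ is supported entirely on the $\tau$-eigenspace, that is, it is an eigenvector of $A$ for $\tau$ (nonzero precisely when $S$ is a nonempty proper coclique); conversely, if $v_S - \tfrac{s}{N}\mathbf{1}$ is a $\tau$-eigenvector then all inequalities become equalities and the bound is attained. I do not expect a genuine obstacle here, as this is a classical result; the only real care needed is the sign bookkeeping when dividing by the negative quantities $\tau$ and $\tau - k$, and noting that the argument presumes $X$ has at least one edge, so that $\tau < 0$ and the stated ratio is meaningful.
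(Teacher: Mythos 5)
Your proof is correct and complete: the three identities ($c_0=s/\sqrt{N}$, $\|v_S\|^2=s$, $v_S^{\top}Av_S=0$ for a coclique), the estimate $\theta_i\geq\tau$ applied to the tail of the expansion, and the sign-aware division by $\tau-k<0$ give exactly the stated bound, and your equality analysis correctly shows that tightness forces $v_S-\frac{s}{N}\mathbf{1}$ into the $\tau$-eigenspace and conversely. Note, however, that the paper itself contains no proof of Theorem~\ref{thm:ratiobound}; it is quoted as a classical result with the proof deferred to the cited note of Haemers, so there is nothing internal to compare against. Your argument is the standard spectral-decomposition (Rayleigh-quotient) proof, essentially Hoffman's original one. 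A genuinely different route, worth knowing in the context of this paper, is to apply eigenvalue interlacing to the $2\times 2$ quotient matrix of the partition $\{S, V\setminus S\}$: that proof meshes directly with the equitable-partition machinery the paper develops in Lemma~\ref{lem:eigenvalues-equitable-partition} and Theorem~\ref{thm:cocliques-module}, where the same quotient matrix and the lifted eigenvector $v_{\mathcal{S}}-\frac{|\mathcal{S}|}{|\Omega|}\mathbf{1}$ reappear. Two trivial points of bookkeeping in your write-up: the case $s=0$ should be dismissed before dividing by $s$, and, as you observe, the theorem implicitly assumes $X$ has at least one edge so that $\tau<0$ and the ratio is defined.
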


A proof and the history about the Ratio Bound can be found in \cite{haemers2021hoffman}.

\subsection{Graph products}

In this subsection, we will recall some important graph products that are used later in this work.
\begin{defn}
	Let $X = (V(X),E(X))$ and $Y = (V(Y),E(Y))$ be two graphs. The direct product $X \times Y$ of the graphs $X$ and $Y$ is the graph whose vertex set is $V(X) \times V(Y)$ and
	\begin{align}
		(x,y)\sim_{X\times Y} (u,v) \Leftrightarrow
			x \sim_X u \mbox{ and } y\sim_Y v.
	\end{align} 
\end{defn}
\begin{defn}
	Let $X = (V(X),E(X))$ and $Y = (V(Y),E(Y))$ be two graphs. The strong product $X \boxtimes Y$ of the graphs $X$ and $Y$ is the graph whose vertex set is $V(X) \times V(Y)$ and
	\begin{align}
		(x,y)\sim_{X\boxtimes Y} (u,v) \Leftrightarrow
		\begin{cases}
			x = u \mbox{ and } y\sim_Y v\\
			y = v \mbox{ and } x\sim_X u\\
			x \sim_X u \mbox{ and } y\sim_Y v.
		\end{cases}
	\end{align} 
\end{defn}

For any $n\geq 2$, let $I_n$ be the $n\times n$ identity matrix and $J_n$ be the $n\times n$ matrix whose entries consist of $1$. The next proposition gives the adjacency matrices of the graph products defined above.

\begin{prop}
	Let $X$ and $Y$ be two graphs with adjacency matrices $A$ and $B$, respectively. 
	\begin{enumerate}[(a)]
		\item The adjacency matrix of $X\times Y$ is $A\otimes B$. In particular, the eigenvalues of $X \times Y$ are of the form $ab$, where $a$ and $b$ are eigenvalues of $A$ and $B$, respectively.
		\item The adjacency matrix of $X\boxtimes Y$ is $(A+I_{|V(X)|})\otimes (B+I_{|V(Y)|}) - I_{|V(X)||V(Y)|}$. In particular, the eigenvalues of $X\boxtimes Y$ are of the form $(1+a)(1+b)-1$.
	\end{enumerate}
\end{prop}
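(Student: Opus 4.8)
The plan is to prove both statements directly from the definition of the Kronecker product, recalling that for matrices $A$ and $B$ the entry of $A \otimes B$ indexed by the pair of pairs $\left((x,y),(u,v)\right)$ equals $A_{xu}\,B_{yv}$. Since $A$ and $B$ are $0$--$1$ matrices (adjacency matrices of simple graphs, hence with zero diagonal), this product is $1$ precisely when $A_{xu} = 1$ and $B_{yv} = 1$, that is, when $x \sim_X u$ and $y \sim_Y v$. Comparing this with the definition of the direct product, I would conclude that $A \otimes B$ is exactly the adjacency matrix of $X \times Y$, which settles the matrix claim in (a).

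For the eigenvalue claim in (a), I would use the standard fact that if $Av = av$ and $Bw = bw$, then $(A\otimes B)(v\otimes w) = (Av)\otimes(Bw) = ab\,(v\otimes w)$, so $v\otimes w$ is an eigenvector of $A\otimes B$ with eigenvalue $ab$. Since adjacency matrices of (undirected) graphs are symmetric, each of $A$ and $B$ admits an orthonormal eigenbasis; the $|V(X)|\cdot|V(Y)|$ tensor products of these basis vectors are linearly independent and hence form an eigenbasis of $A\otimes B$. This accounts for every eigenvalue, so the spectrum of $X\times Y$ consists precisely of the products $ab$.

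For part (b), I would repeat the entry-by-entry analysis, this time for $(A+I_{|V(X)|})\otimes(B+I_{|V(Y)|})$. The $\left((x,y),(u,v)\right)$-entry equals $(A+I_{|V(X)|})_{xu}(B+I_{|V(Y)|})_{yv}$, and since $A+I_{|V(X)|}$ and $B+I_{|V(Y)|}$ are again $0$--$1$ matrices, this is $1$ exactly when $\left[x=u \text{ or } x\sim_X u\right]$ and $\left[y=v \text{ or } y\sim_Y v\right]$. Expanding, these cases are precisely the three adjacency conditions defining $X\boxtimes Y$ together with the single extra case $x=u$ and $y=v$. The latter contributes exactly the diagonal, so subtracting $I_{|V(X)||V(Y)|}$ deletes these loops and leaves the adjacency matrix of $X\boxtimes Y$, proving the matrix claim. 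The eigenvalue claim then follows because the eigenvalues of $A+I_{|V(X)|}$ are $1+a$ over the eigenvalues $a$ of $A$, so by part (a) the eigenvalues of $(A+I_{|V(X)|})\otimes(B+I_{|V(Y)|})$ are the products $(1+a)(1+b)$, and subtracting $I_{|V(X)||V(Y)|}$ shifts the entire spectrum down by $1$.

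The argument is almost entirely routine bookkeeping; the only point that requires a moment of care is the diagonal correction in (b), where one must verify that the extra $x=u$, $y=v$ case introduced by the identity shifts corresponds exactly to the diagonal removed by $-I_{|V(X)||V(Y)|}$, which relies on the assumption that $X$ and $Y$ are loopless. A secondary subtlety is that the passage from ``$ab$ is an eigenvalue'' to ``these are all the eigenvalues'' uses the symmetry (hence diagonalizability) of $A$ and $B$; for general digraphs one would instead invoke the multiplicativity of the Kronecker product on spectra.
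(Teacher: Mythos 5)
Your proof is correct and complete. The paper actually states this proposition without proof, treating it as a standard fact about Kronecker products; your entry-wise verification of $(A\otimes B)_{(x,y),(u,v)} = A_{xu}B_{yv}$, the diagonal correction for the strong product, and the tensor eigenbasis argument (using symmetry of $A$ and $B$ to account for all eigenvalues) constitute exactly the standard argument the authors are implicitly invoking, including the correct observation that looplessness is what makes the $-I_{|V(X)||V(Y)|}$ correction in part (b) work.
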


Let us now introduce a new graph that is crucial to the main results of this paper.
\begin{defn}
	Let $X = (V(X),E(X))$ be a graph. The graph product $X\bowtie K_2$ is defined to be the graph obtained by taking two disjoint copies of $X$, and a vertex from one copy is adjacent to a vertex from the other copy if they are adjacent in $X \times K_2$.\label{eq:defn-graph-product}
\end{defn}

Note that the graph $X \bowtie K_2$ is exactly the graph $X \boxtimes K_2$ in which the perfect matching $\{(x,1)\sim (x,0): x \in V(X)\}$ is removed. 

The proof of the next proposition is straightforward, so we omit it.
\begin{prop}
	Let $X$ be a graph with adjacency matrix $A$. The adjacency matrix of $X\bowtie K_2$ is $J_2 \otimes A$.
\end{prop}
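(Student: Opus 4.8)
The plan is to read off the $2|V(X)| \times 2|V(X)|$ adjacency matrix of $X \bowtie K_2$ block by block, after fixing a convenient ordering of the vertices. Writing $n := |V(X)|$ and $V(X \bowtie K_2) = V(X) \times \{0,1\}$, I would list all vertices of the first copy (second coordinate $0$) before all vertices of the second copy (second coordinate $1$), so that the adjacency matrix splits naturally into four $n \times n$ blocks indexed by the two copies.

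Next I would identify each block directly from Definition~\ref{eq:defn-graph-product}. Since $X \bowtie K_2$ is built from two disjoint copies of $X$, each retaining the edges of $X$, the two diagonal blocks (edges internal to a copy) are both equal to $A$. For the off-diagonal blocks, a vertex $(x,0)$ of the first copy is adjacent to a vertex $(u,1)$ of the second precisely when $(x,0) \sim_{X \times K_2} (u,1)$, which by the definition of the direct product means $x \sim_X u$ and $0 \sim_{K_2} 1$; the latter always holds, so the cross-edges are exactly the pairs with $x \sim_X u$, giving off-diagonal blocks equal to $A$ as well. Assembling the four blocks yields $\left[ \begin{array}{cc} A & A \\ A & A \end{array} \right] = J_2 \otimes A$.

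As an independent check, I would rederive the same formula through the identity recorded after Definition~\ref{eq:defn-graph-product}, namely that $X \bowtie K_2$ is $X \boxtimes K_2$ with the perfect matching $\{(x,0) \sim (x,1) : x \in V(X)\}$ deleted. Taking $B = J_2 - I_2$ as the adjacency matrix of $K_2$, the preceding proposition on strong products gives the adjacency matrix of $X \boxtimes K_2$ as $(A + I_n) \otimes J_2 - I_{2n}$, while the matching has matrix $I_n \otimes (J_2 - I_2)$. Subtracting and expanding the Kronecker products, every term except $A \otimes J_2$ cancels, confirming the claim.

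There is no genuine obstacle here; the only point requiring care is the ordering convention for the Kronecker product. Ordering the vertices by copy yields the stated form $J_2 \otimes A$, whereas ordering them by the $X$-coordinate (as the strong-product formula of the previous proposition implicitly does) yields $A \otimes J_2$; these are permutation-similar and describe the same graph, so I would fix the "by copy" ordering at the outset to land precisely on $J_2 \otimes A$.
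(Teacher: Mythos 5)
Your proof is correct: the block decomposition (diagonal blocks $A$ from the two disjoint copies of $X$, off-diagonal blocks $A$ from the $X \times K_2$ cross-edges) gives exactly $J_2 \otimes A$, and your cross-check via the strong-product formula with the matching $I_n \otimes (J_2 - I_2)$ subtracted off is also sound, including the observation that $A \otimes J_2$ and $J_2 \otimes A$ differ only by the vertex-ordering convention. The paper itself omits this proof as straightforward, and your argument is precisely the straightforward verification intended.
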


\section{The association scheme arising from a Gelfand pair $(\sym(n),H)$}\label{sect:association-scheme}

\subsection{Eigenvalues}

Let $G$ be a group and $H\leq G$ be a subgroup. In this subsection, we recall the formula giving the eigenvalues of each digraph in the association scheme arising from the Gelfand pair $(G,H)$. Assume that $k = \langle \ind{H}{G},\ind{H}{G}  \rangle$ and let $\mathcal{O} = \{O_0,O_1,\ldots,O_{k-1}\}$ be the set of all orbitals of $G$ acting on $G/H$. 
For any $0\leq i\leq k-1$, let $X_i$ be the orbital digraph determined by $O_i$ and  let $A(X_i)$ be its adjacency matrix. Define
\begin{align*}
	\Lambda(G,H):= \left\{ \phi \in \irr{G}:\ \left\langle \ind{H}{G},\phi \right\rangle = 1 \right\}.
\end{align*}

For any $\phi \in \Lambda(G,H)$, let $\Phi_\phi$ be the complex matrix representation that corresponds to the irreducible character $\phi$. Define
\begin{align*}
	\Psi := \sum_{\phi \in \Lambda(G,H)} \Phi_\phi.
\end{align*}
The representation $\Psi$ is the \emph{permutation representation} of $G$ acting on $G/H$. By  \cite[Theorem~3.2.2, Lemma~13.4.1]{godsil2016erdos}, the adjacency matrix $A(X_i)$ of $X_i$ commutes with $\Psi(g)$, for all $g\in G$. Note that for all $0\leq i\leq k-1$, the matrices $A(X_i)$ are diagonalizable since they can be expressed as sums of simultaneously diagonalizable matrices. Let $E_i$ be a matrix whose columns consist of pairwise linearly independent basic eigenvectors of $A(X_i)$. Then, $E_i$ is a polynomial in $A(X_i)$ and therefore, it commutes with any $\Psi(g)$, for all $g\in G$. From this, the column space of $E_i$ is an invariant subspace, i.e., a $\CC G$-module.
Further, one can also prove that every eigenspace of $A(X_i)$ is a sum of irreducible $\CC G$-modules whose characters appear in $\Lambda(G,H)$. Therefore, the eigenvalues of the graphs $(X_i)_{0\leq i\leq k-1}$ are indexed by $\Lambda(G,H)$.
For any $0\leq i\leq k-1$, let 
\begin{align}
	k_i := |\left\{ yH \in G/H  :\ (H,yH) \in O_i \right\}|.\label{eq:ki}
\end{align}
 The next theorem gives the eigenvalues of $X_i$.

\begin{thm}{\cite[Lemma~13.8.3]{godsil2016erdos}}
	The eigenvalues of $X_i$ are uniquely determined by the irreducible characters in $\Lambda(G,H)$. For any $\phi \in \Lambda(G,H)$, the eigenvalue of $X_i$ afforded by $\phi$ is given by
	\begin{align*}
		\xi_\phi(X_i) := \frac{k_i}{|H|} \sum_{h\in H} \phi(x_\ell h),
	\end{align*}
	where $x_\ell \in G$ such that $(H,x_\ell H) \in O_i$.\label{thm:general-eigenvalues}
\end{thm}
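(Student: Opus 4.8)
The plan is to build on what the surrounding discussion has already set up. Because $(G,H)$ is a Gelfand pair, the permutation module $M=\mathbb{C}[G/H]$ decomposes without multiplicity as $M=\bigoplus_{\phi\in\Lambda(G,H)}W_\phi$, where $W_\phi$ affords the character $\phi$, and each adjacency matrix $A(X_i)$ lies in the commutant of $\Psi$. By Schur's Lemma, $A(X_i)$ therefore acts as a scalar $\xi_\phi(X_i)$ on every irreducible constituent $W_\phi$; this already delivers the first assertion that the eigenvalues are indexed by $\Lambda(G,H)$, with each $\phi$ affording exactly one. The whole problem thus reduces to computing this scalar, and I would do so by a trace argument.

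To isolate $\xi_\phi(X_i)$ I would pair $A(X_i)$ against the projection onto $W_\phi$. Writing $E_\phi=\frac{\phi(1)}{|G|}\sum_{g\in G}\overline{\phi(g)}\,\Psi(g)$ for the projection of $M$ onto its $\phi$-isotypic component (which, by multiplicity-freeness, is just $W_\phi$), the commutation of $A(X_i)$ with $\Psi$ gives $A(X_i)E_\phi=\xi_\phi(X_i)E_\phi$, and since $\tr(E_\phi)=\phi(1)$ we obtain
\[
\xi_\phi(X_i)=\frac{1}{\phi(1)}\tr\bigl(A(X_i)E_\phi\bigr)=\frac{1}{|G|}\sum_{g\in G}\overline{\phi(g)}\,\tr\bigl(A(X_i)\Psi(g)\bigr).
\]
Everything then hinges on $\tr(A(X_i)\Psi(g))$, which a direct expansion in the coset basis identifies as $|\{\,xH\in G/H:(xH,gxH)\in O_i\,\}|$.

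The key combinatorial step is to reorganize the resulting double sum into a single sum over $H$. For this I would use two facts: the orbital $O_i$ is the $G$-orbit of its base pair $(H,x_\ell H)$, and $(H,cH)\in O_i$ precisely when $c$ lies in the double coset $Hx_\ell H$; hence $(xH,gxH)\in O_i$ if and only if $x^{-1}gx\in Hx_\ell H$. Substituting $c=x^{-1}gx$ and invoking that $\phi$ is a class function makes the inner sum $\sum_{g:\,x^{-1}gx\in Hx_\ell H}\overline{\phi(g)}=\sum_{c\in Hx_\ell H}\overline{\phi(c)}$ independent of the coset $xH$. Summing this constant over the $[G:H]$ cosets and cancelling yields
\[
\xi_\phi(X_i)=\frac{1}{|H|}\sum_{c\in Hx_\ell H}\overline{\phi(c)}.
\]

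It remains to convert the double-coset sum into the sum over $H$ appearing in the statement. The double coset $Hx_\ell H$ is the disjoint union of the $k_i$ left cosets in the $H$-orbit of $x_\ell H$, so the multiplication map $H\times H\to Hx_\ell H$, $(h,h')\mapsto hx_\ell h'$, is exactly $|\stab{H}{x_\ell H}|$-to-one, with $\stab{H}{x_\ell H}=H\cap x_\ell Hx_\ell^{-1}$. Combined with cyclicity of the character, $\phi(hx_\ell h')=\phi(x_\ell h'h)$, this collapses $\sum_{c\in Hx_\ell H}\overline{\phi(c)}$ to $\tfrac{|H|}{|\stab{H}{x_\ell H}|}\sum_{h\in H}\overline{\phi(x_\ell h)}$. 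Since orbit-stabilizer gives $k_i=|H|/|\stab{H}{x_\ell H}|$, the prefactors combine to $\tfrac{k_i}{|H|}$, and because every irreducible character of $\sym(n)$ is real-valued we may drop the conjugate to reach $\xi_\phi(X_i)=\frac{k_i}{|H|}\sum_{h\in H}\phi(x_\ell h)$. I expect the main obstacle to be exactly this final bookkeeping: tracking the $H\times H$-to-double-coset multiplicities and deploying the class-function and cyclicity identities carefully, so that the conjugations introduced by the running variable $x$ genuinely cancel and the double sum reduces to the single sum over $H$.
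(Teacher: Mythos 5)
The paper does not actually prove this theorem: it is imported verbatim from Godsil and Meagher's monograph (Lemma~13.8.3 of \cite{godsil2016erdos}), so there is no internal proof to compare yours against. Your argument is a correct, self-contained derivation of the cited formula, and it follows what is essentially the standard route hidden behind the citation: multiplicity-freeness plus Schur's lemma shows that $A(X_i)$ acts as a scalar $\xi_\phi(X_i)$ on each irreducible constituent $W_\phi$ (using the fact, recorded just before the theorem, that $A(X_i)$ commutes with every $\Psi(g)$); pairing against the isotypic projection $E_\phi$ and identifying $\tr\bigl(A(X_i)\Psi(g)\bigr)$ as the number of cosets $xH$ with $(xH,gxH)\in O_i$ turns that scalar into a character sum; and the double-coset bookkeeping is sound, since $(H,cH)\in O_i$ if and only if $c\in Hx_\ell H$, the map $H\times H\to Hx_\ell H$, $(h,h')\mapsto hx_\ell h'$, is $|H\cap x_\ell Hx_\ell^{-1}|$-to-one, and orbit--stabilizer gives $k_i=|H|/|\stab{H}{x_\ell H}|$, which is exactly the number of left cosets inside $Hx_\ell H$. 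The one point worth flagging is the last step: your computation yields $\xi_\phi(X_i)=\frac{k_i}{|H|}\sum_{h\in H}\overline{\phi(x_\ell h)}$ in general, and discarding the conjugate is legitimate only because the paper applies the theorem with $G=\sym(n)$, whose irreducible characters are all real-valued. For a general Gelfand pair with non-real characters the conjugate is not cosmetic: it amounts to exchanging $\phi$ with $\overline{\phi}$ (both lie in $\Lambda(G,H)$, since $\ind{H}{G}$ is a real character) or, equivalently, to the choice of orientation convention in the adjacency matrix of the digraph $X_i$; one can see the discrepancy already for $G=\mathbb{Z}_3$ acting on itself, where the directed cycle has eigenvalue $\overline{\phi(1)}$ on the summand affording $\phi$. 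So your proof establishes the statement in the generality in which the paper actually uses it, with a convention-level discrepancy, not a gap, in the fully general case.
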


The formula for the eigenvalues given in the previous theorem is usually hard to manipulate since there is no way to control the elements of the coset $x_\ell H$, which in turn makes the corresponding character value difficult to determine. 
It is sometimes possible to compute certain eigenvalues via special techniques. In the next subsection, we will compute certain eigenvalues using equitable partitions for the case when $G = \sym(n)$.

Let $G = \sym(n)$. It is well known that the representations of $G$ are indexed by the partitions of the integer $n$. For any $\lambda \vdash n$, the corresponding irreducible $\CC G$-module is the \emph{Specht module} $S^\lambda$ and we denote its irreducible character by $\chi^\lambda$. If $(G,H)$ is a Gelfand pair, then the eigenvalues of a graph in the corresponding orbital scheme are indexed by the partitions in $\Lambda(n,H)$. For any $\lambda \in \Lambda(n,H)$, we let $\xi_\lambda$ be the eigenvalue afforded by $\chi^\lambda$. In addition, we will refer to the irreducible $\CC G$-module $S^\lambda$ as the $\lambda$\emph{-module}.

\subsection{Equitable partitions}

We assume henceforth that $G = \sym(n)$ and let $H\leq G$ such that $(G,H)$ is a Gelfand pair. Let $k$ be the rank of the group $G$ acting on $G/H$. Recall that $\mathcal{O} = \{O_0,O_1,\ldots,O_{k-1}\}$ is the set of all orbitals of $G$ acting on $G/H$ and
for any $0\leq i\leq k-1$, the digraph $X_i$ is the orbital digraph determined by $O_i$ and  the matrix $A(X_i)$ is its adjacency matrix. Recall also that $k_i$ is the degree of the orbital graph $X_i$, for $0\leq i \leq k-1$ (see \eqref{eq:ki}). Let $\lambda \vdash n$ be the partition which is the second largest in dominance ordering in $ \Lambda(n,H)$. Let $K = \sym(\lambda)$. We state the following theorem whose proof is given in  \cite[Theorem~13.9.1]{godsil2016erdos}.

\begin{lem}
	The action of $K$ on $G/H$ by left multiplication admits exactly two orbits.\label{lem:two-orbits}
\end{lem}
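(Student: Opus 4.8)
The plan is to count the orbits of $K=\sym(\lambda)$ on $\Omega = G/H$ directly and to show that this count equals $2$. The engine is Burnside's lemma (the Orbit Counting Lemma~\ref{lem:orbit-counting-lemma}) combined with Frobenius reciprocity, which converts the orbit count into an inner product of two induced trivial characters of $G=\sym(n)$ that can be read off from the dominance order.

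First I would note that, since $\fix_H^G = \ind{H}{G}$, the number of $K$-orbits on $\Omega$ is
\[
\frac{1}{|K|}\sum_{g\in K}\fix_H^G(g) = \left\langle \Res^G_K \ind{H}{G}, \triv{K} \right\rangle_K = \left\langle \ind{H}{G}, \ind{K}{G} \right\rangle_G,
\]
the last equality being Frobenius reciprocity. Thus it suffices to compute this inner product over $G$. Here both factors decompose cleanly: because $H$ is multiplicity-free, $\ind{H}{G} = \sum_{\mu\in\Lambda(n,H)}\chi^\mu$, while $\ind{K}{G} = \ind{\sym(\lambda)}{G}$ is the character of the Young permutation module $M^\lambda$, whose irreducible decomposition is given by Young's rule as $\ind{\sym(\lambda)}{G} = \sum_{\mu}K_{\mu\lambda}\,\chi^\mu$, where $K_{\mu\lambda}$ denotes the Kostka number. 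The relevant facts about these numbers are that $K_{\mu\lambda}\neq 0$ exactly when $\mu\trianglerighteq\lambda$, and that $K_{\lambda\lambda}=1=K_{[n]\lambda}$. Using orthonormality of the irreducible characters, the inner product collapses to
\[
\left\langle \ind{H}{G}, \ind{K}{G}\right\rangle_G = \sum_{\mu\in\Lambda(n,H)}K_{\mu\lambda} = \sum_{\substack{\mu\in\Lambda(n,H)\\ \mu\trianglerighteq\lambda}}K_{\mu\lambda}.
\]

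The decisive step is then to invoke the hypothesis that $\lambda$ is the second largest element of $\Lambda(n,H)$ in dominance order. This forces the only partitions $\mu\in\Lambda(n,H)$ with $\mu\trianglerighteq\lambda$ to be $[n]$ and $\lambda$ itself, so the sum equals $K_{[n]\lambda}+K_{\lambda\lambda}=1+1=2$, giving exactly two orbits. I expect the only delicate point to be making the phrase \emph{second largest in dominance ordering} precise and confirming that it excludes every other dominating constituent; in the situations treated in this paper, where $\lambda=[n-1,1]$, this is automatic, since $[n-1,1]$ is covered only by $[n]$ in the dominance lattice, so no partition other than $[n]$ and $[n-1,1]$ can dominate it. The remaining ingredients, namely the two character decompositions and the evaluation of the two boundary Kostka numbers, are standard facts from the representation theory of the symmetric group.
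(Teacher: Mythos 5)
Your proposal is correct, and it is essentially the paper's approach: the paper does not prove this lemma itself but cites Theorem~13.9.1 of Godsil and Meagher, whose proof is exactly your computation --- orbit counting plus Frobenius reciprocity turns the orbit count into $\langle \ind{H}{G},\ind{K}{G}\rangle$, and Young's rule together with the maximality of $\lambda$ in $\Lambda(n,H)\setminus\{[n]\}$ leaves only $K_{[n]\lambda}+K_{\lambda\lambda}=1+1=2$. Your closing remark correctly identifies the only point needing care: ``second largest in dominance ordering'' must be read as saying that no constituent other than $[n]$ strictly dominates $\lambda$, which is automatic in the case $\lambda=[n-1,1]$ treated in this paper.
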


Using these orbits, we can sometimes easily compute certain eigenvalues of the digraphs $X_i$ defined in the previous section. Let $\Omega = G/H$. First, we note that the set of orbits $\pi = \{\mathcal{S},\Omega\setminus \mathcal{S}\}$ forms a partition of $\Omega$ which is equitable, for all orbital digraphs in the orbital scheme. The quotient matrix of each orbital digraph in the orbital scheme is given in the next lemma.
\begin{lem}
	For any $0\leq i\leq k-1$, there exists a number $0\leq a_i\leq k_i-1$ such that the quotient matrix corresponding to $X_i/\pi$ is
	\begin{align*}
		A(X_i/\pi) = 
		\begin{bmatrix}
			a_i & k_i -a_i\\
			\frac{(k_i-a_i)|\mathcal{S}|}{|\Omega|-|\mathcal{S}|} & \frac{d|\Omega|+a_i|\mathcal{S}| - 2k_i |\mathcal{S}|}{|\Omega|-|\mathcal{S}|}
		\end{bmatrix}.
	\end{align*}\label{lem:eigenvalues-equitable-partition}
\end{lem}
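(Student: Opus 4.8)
The plan is to read off the four entries of the quotient matrix directly from the combinatorial data of the partition, using regularity of $X_i$ together with a single edge-counting identity across the cut. First I would justify that $\pi=\{\mathcal{S},\Omega\setminus\mathcal{S}\}$ is equitable for every orbital digraph $X_i$. The group $K=\sym(\lambda)$ acts on $\Omega=G/H$ by left multiplication, and because each orbital relation $O_i$ is $G$-invariant it is in particular $K$-invariant; hence every element of $K$ acts as an automorphism of $X_i$. By Lemma~\ref{lem:two-orbits} the orbits of $K$ on $\Omega$ are exactly $\mathcal{S}$ and $\Omega\setminus\mathcal{S}$. Since the orbits of any group of automorphisms of a digraph form an equitable partition (an automorphism sending $x$ to $x'$ maps the out-neighbours of $x$ lying in a given part bijectively onto those of $x'$), the partition $\pi$ is equitable for each $X_i$. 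This is precisely the assertion made just before the statement, and it lets me speak of the constant entries of $A(X_i/\pi)$.

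Next I would use regularity to fix the first row. Each $X_i$ is regular of valency $k_i$, and its in- and out-valencies coincide because paired orbitals have equal valency $k_i=k_{i'}$ in a commutative association scheme; thus $X_i$ is a $k_i$-regular digraph with equal in- and out-degrees. Setting $a_i$ to be the number of out-neighbours inside $\mathcal{S}$ of a fixed vertex of $\mathcal{S}$ (a constant, by equitability), regularity forces the number of its out-neighbours lying in $\Omega\setminus\mathcal{S}$ to equal $k_i-a_i$. This gives the first row $\begin{bmatrix} a_i & k_i-a_i\end{bmatrix}$ together with the constraint $0\le a_i\le k_i$.

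The second row then follows from one counting identity. Let $e_{12}$ and $e_{21}$ denote the numbers of arcs of $X_i$ from $\mathcal{S}$ to $\Omega\setminus\mathcal{S}$ and back. Summing out-degrees and in-degrees over $\mathcal{S}$ and subtracting, the equality of in- and out-valency gives $e_{12}=e_{21}$; this is the one place where the equal-valency remark is essential, and it is the only genuine subtlety in the argument. Counting $e_{12}=|\mathcal{S}|(k_i-a_i)$ from the first row and $e_{21}=(|\Omega|-|\mathcal{S}|)\,a_{21}$ from the second yields the $(2,1)$-entry $a_{21}=\frac{(k_i-a_i)|\mathcal{S}|}{|\Omega|-|\mathcal{S}|}$. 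Finally, regularity at the vertices of $\Omega\setminus\mathcal{S}$ gives $a_{22}=k_i-a_{21}$, and clearing denominators produces $a_{22}=\frac{k_i|\Omega|+a_i|\mathcal{S}|-2k_i|\mathcal{S}|}{|\Omega|-|\mathcal{S}|}$, matching the displayed entry once the symbol $d$ there is identified with $k_i$.

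It remains to sharpen $0\le a_i\le k_i$ to $0\le a_i\le k_i-1$. If $a_i=k_i$ then $e_{12}=0$, hence $e_{21}=0$, so $\mathcal{S}$ is a union of connected components of $X_i$; for the non-trivial (connected) orbital digraphs this is impossible, since $\mathcal{S}$ is a proper non-empty subset of $\Omega$, and the strict bound follows (the diagonal digraph $X_0$ is the degenerate exception, with $a_0=k_0=1$). I expect no serious obstacle here: the whole argument is bookkeeping, the only point requiring care being the identity $e_{12}=e_{21}$, which reduces to the standard fact that an orbital digraph and its paired digraph share the same valency.
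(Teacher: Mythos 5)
Your derivation of the matrix itself is correct and follows essentially the same route as the paper: regularity of $X_i$ gives the first row, a double count of the arcs crossing the cut $\{\mathcal{S},\Omega\setminus\mathcal{S}\}$ gives the $(2,1)$-entry, and $A(X_i/\pi)_{22}=k_i-A(X_i/\pi)_{21}$ finishes (and yes, the symbol $d$ in the display is a typo for $k_i$). You are in fact more careful than the paper on the one delicate point: the paper's double count silently identifies the number of in-neighbours of a vertex of $\mathcal{S}$ lying outside $\mathcal{S}$ with the number of its out-neighbours lying outside $\mathcal{S}$, which for a digraph requires exactly your identity $e_{12}=e_{21}$; your derivation of it from the equality of in- and out-valency (paired suborbits having equal length) supplies the justification the paper omits. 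Likewise, your proof that $\pi$ is equitable, via orbits of the automorphism group $K=\sym(\lambda)$, is the intended argument behind the paper's unproved assertion preceding the lemma.

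The flaw is in your final step, the sharpening to $a_i\le k_i-1$. You assume every non-trivial orbital digraph is connected, and this is false; this very paper contains disconnected non-trivial orbital graphs (e.g.\ $\mathcal{K}^+(n,k)$ in the scheme of line~5, a disjoint union of two Kneser graphs). Worse, the bound itself can fail, so no argument can rescue it: in the quasi-Johnson scheme $\mathcal{J}^{+\pm}(n,k)$ of Section~\ref{sect:line2-3}, the orbital $\mathcal{O}_0^-$ gives the perfect matching $J^-(n,k,0)$ with edges $A^+\sim A^-$, and the orbit $\mathcal{S}=\left\{ A^\pm :\ n\in A \right\}$ of $\sym([n-1,1])$ is a union of components of this graph, since each pair $\{A^+,A^-\}$ lies entirely inside or entirely outside $\mathcal{S}$; hence $a_i=k_i=1$ there, violating $a_i\le k_i-1$. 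So the ``degenerate exception'' is not confined to the diagonal orbital $X_0$. For comparison, the paper's own proof never addresses the bound at all --- it only derives the matrix --- and note that the matrix formula remains valid when $a_i=k_i$ (both off-diagonal entries vanish), so nothing downstream is affected; but as a proof of the statement as literally written, your last step is not a repairable gap so much as evidence that the stated bound on $a_i$ is itself erroneous and should be weakened to $0\le a_i\le k_i$.
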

\begin{proof}
	Since $\pi$ is equitable with two parts, let $a_i$ be the number of vertices in $\mathcal{S}$ adjacent to a given vertex in $\mathcal{S}$. If the rows and columns of $A(X_i/\pi)$ are arranged with respect to $\{\mathcal{S},\Omega \setminus \mathcal{S}\}$, then it is clear that $A(X_i/\pi)_{11} = a_i$ and $A(X_i/\pi)_{12} = k_i-a_i$, by regularity of $X_i$. The entry $A(X_i/\pi)_{21}$ can be computed using double counting. Consider the set
	\begin{align*}
		V = \{ (xH,yH):\ xH\in \Omega\setminus \mathcal{S} \mbox{ and } yH \in \mathcal{S} \}.
	\end{align*}
	On the one hand, given $xH \in \Omega\setminus \mathcal{S}$ we have
	\begin{align*}
		A(X_i/\pi)_{21} =\left| \left\{ (xH,yH) :\ yH \in \mathcal{S} \right\} \right|.
	\end{align*}
	Moreover, 
	\begin{align*}
		|V| = \left| \bigcup_{xH\in \Omega\setminus \mathcal{S}} \{ (xH,yH):\ y H \in \mathcal{S} \} \right| = A(X_i/\pi)_{12} |\Omega\setminus \mathcal{S}| = A(X_i/\pi)_{21} \left(|\Omega|-|\mathcal{S}|\right).
	\end{align*}
	On the other hand, we have
	\begin{align*}
		|V| = \left|\bigcup_{yH\in \mathcal{S}} \left\{(xH,yH):\ xH\in \Omega\setminus \mathcal{S} \right\}\right| = |\mathcal{S}|A(X_i/\pi)_{12} = |\mathcal{S}|(k_i-a_i).
	\end{align*}
	Therefore, we have $A(X_i/\pi)_{21} = \frac{|\mathcal{S}|(k_i-a_i)}{|\Omega| - |\mathcal{S}|}$. The entry $A(X_i/\pi)_{22} $ can be easily deduced since it is equal to $k_i - A(X_i/\pi)_{21}$.
\end{proof}

By Theorem~\ref{thm:eigenvalues-equitable-partitions}, the eigenvalues of $A(X_i/\pi)$ are also eigenvalues of the digraph $X_i$, for any $0\leq i\leq k-1$. The eigenvalues of $A(X_i/\pi)$ are   
\begin{align*}
	k_i\mbox{ and } -\frac{(k_i-a_i)|\mathcal{S}|}{|\Omega|-|\mathcal{S}|}.
\end{align*}
We now present some consequences of Lemma~\ref{lem:eigenvalues-equitable-partition} that are crucial to the main result of this paper.
\begin{lem}
	Let $\Omega = G/H$ and $\pi = \left\{ \mathcal{S},\Omega \setminus \mathcal{S} \right\}$ be the set of orbits of $K=\sym(\lambda)$. For $0\leq i \leq k-1$, the set $\mathcal{S}$ is a coclique of $X_i$ if and only if $a_i = 0$.
\end{lem}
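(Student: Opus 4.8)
The plan is to reduce the statement to the definition of the quantity $a_i$ introduced in the proof of Lemma~\ref{lem:eigenvalues-equitable-partition}. There, $a_i$ is exactly the $(\mathcal{S},\mathcal{S})$-entry of the quotient matrix $A(X_i/\pi)$, namely the number of out-neighbours lying in $\mathcal{S}$ of an arbitrarily chosen vertex of $\mathcal{S}$. The crucial feature is that this number does not depend on the chosen vertex, precisely because $\pi$ is an equitable partition for $X_i$; I would begin by recording this fact explicitly, since it is what makes the argument uniform over all of $\mathcal{S}$.

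For the forward direction, suppose $\mathcal{S}$ is a coclique of $X_i$. Then no two distinct vertices of $\mathcal{S}$ are joined by an arc of $X_i$, so in particular any fixed $xH \in \mathcal{S}$ has no out-neighbour inside $\mathcal{S}$; hence $a_i = 0$. For the converse, suppose $a_i = 0$. Then every vertex of $\mathcal{S}$ has no out-neighbour in $\mathcal{S}$, that is, for all $xH \in \mathcal{S}$ there is no $yH \in \mathcal{S}$ with $(xH,yH) \in O_i$. I would then observe that this already rules out every arc of $X_i$ with both endpoints in $\mathcal{S}$: any such arc has the form $(xH,yH)$ with $xH,yH \in \mathcal{S}$, and its tail $xH$ lies in $\mathcal{S}$, so it would be an out-going arc from a vertex of $\mathcal{S}$ into $\mathcal{S}$, which $a_i = 0$ forbids. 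Thus $\mathcal{S}$ is a coclique.

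The only point that requires a little care — and which I would flag as a subtlety rather than a genuine obstacle — is the directed nature of $X_i$. Since $a_i$ counts out-neighbours, one must check that the vanishing of out-degrees within $\mathcal{S}$ really does preclude all arcs inside $\mathcal{S}$ in both directions. This holds because the out-degree condition is imposed on every vertex of $\mathcal{S}$ simultaneously (again by equitability), so the tail of any hypothetical internal arc is itself a vertex of $\mathcal{S}$ subject to that constraint. With this observation the equivalence is immediate, and no computation beyond invoking Lemma~\ref{lem:eigenvalues-equitable-partition} is needed.
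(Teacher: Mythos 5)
Your proof is correct and is precisely the ``straightforward'' argument that the paper omits: you unpack the definition of $a_i$ from Lemma~\ref{lem:eigenvalues-equitable-partition} as the common number of out-neighbours in $\mathcal{S}$ of each vertex of $\mathcal{S}$ (well defined by equitability), and observe that any arc with both endpoints in $\mathcal{S}$ has its tail in $\mathcal{S}$, so $a_i=0$ forbids all such arcs regardless of orientation. Your explicit treatment of the directed case is a sensible refinement of the intended argument, not a deviation from it.
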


The proof of this lemma is straightforward, thus it is omitted. 

The next theorem gives more details on the relation between cocliques from orbit partitions and the $\lambda$-module, whenever $\lambda = [n-1,1]$ is the second largest in dominance ordering in $\Lambda(n,H)$.

\begin{thm}
	Let $G = \sym(n)$ and $(G,H)$ be a Gelfand pair such that $[n-1,1]$ is the second largest in dominance ordering in $\Lambda(n,H)$. Let $\Omega = G/H$ and $\pi = \left\{ \mathcal{S},\Omega \setminus \mathcal{S} \right\}$ be the set of orbits of $K = \sym([n-1,1])$. For any $0\leq i \leq k-1$, if the set $\mathcal{S}$ is a coclique of $X_i$, then $-\frac{k_i|\mathcal{S}|}{|\Omega| - |\mathcal{S}|}$ is the eigenvalue afforded by the $[n-1,1]$-module.\label{thm:cocliques-module}
\end{thm}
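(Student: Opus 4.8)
The plan is to exhibit an explicit eigenvector, coming from the equitable partition $\pi$, that lies in the $[n-1,1]$-module and carries the eigenvalue in question; Schur's lemma then identifies this eigenvalue with the one afforded by $[n-1,1]$.

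First I would record the relevant eigendata. Since $\mathcal{S}$ is a coclique of $X_i$, the lemma preceding this theorem gives $a_i = 0$, so by Lemma~\ref{lem:eigenvalues-equitable-partition} the quotient matrix $A(X_i/\pi)$ has eigenvalue $k_i$ with eigenvector $(1,1)$ and eigenvalue $\theta := -\frac{k_i|\mathcal{S}|}{|\Omega|-|\mathcal{S}|}$ with eigenvector proportional to $(|\Omega|-|\mathcal{S}|,-|\mathcal{S}|)$. By Theorem~\ref{thm:eigenvalues-equitable-partitions} and Lemma~\ref{lem:eigenvector-lift}, $\theta$ is an eigenvalue of $X_i$, and lifting the latter eigenvector through the characteristic matrix $P_\pi$ produces (up to scalar) the vector $w := v_{\mathcal{S}} - \frac{|\mathcal{S}|}{|\Omega|}\mathbf{1}$, the part of the characteristic vector $v_{\mathcal{S}}$ orthogonal to $\mathbf{1}$.

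Next (and this is the crux) I would prove $w \in S^{[n-1,1]}$. The point is that $K = \sym([n-1,1])$ is a point stabilizer $\sym(n-1)$ inside $\sym(n)$, so $\ind{K}{G}$ is the permutation character of $\sym(n)$ on $n$ points, which decomposes as $\chi^{[n]} + \chi^{[n-1,1]}$. Writing $\CC\Omega = \bigoplus_{\mu \in \Lambda(n,H)} S^\mu$ (each $S^\mu$ occurring once, as $(G,H)$ is a Gelfand pair) and restricting to $K$, Frobenius reciprocity gives $\dim (S^\mu)^K = \langle \chi^\mu, \ind{K}{G}\rangle$, which equals $1$ for $\mu \in \{[n],[n-1,1]\}$ and $0$ otherwise. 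Hence the space $(\CC\Omega)^K$ of $K$-fixed vectors is two-dimensional (in agreement with Lemma~\ref{lem:two-orbits}), with its two one-dimensional summands lying in $S^{[n]}$ and $S^{[n-1,1]}$. Now $v_{\mathcal{S}}$ is constant on the $K$-orbit $\mathcal{S}$ and hence $K$-fixed, as is $\mathbf{1}$; therefore $w$ is $K$-fixed. Since $\langle w, \mathbf{1}\rangle = 0$ and $\mathbf{1}$ spans the trivial module $S^{[n]}$, the vector $w$ has no component in $S^{[n]}$ and so must lie in the unique copy of $S^{[n-1,1]}$.

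Finally, because $S^{[n-1,1]}$ has multiplicity one in $\CC\Omega$ and $A(X_i)$ commutes with the $G$-action, Schur's lemma forces $A(X_i)$ to act on $S^{[n-1,1]}$ as a scalar, and by definition this scalar is $\xi_{[n-1,1]}$. As $w$ is a nonzero vector of $S^{[n-1,1]}$ with $A(X_i)w = \theta w$, we conclude $\xi_{[n-1,1]} = \theta = -\frac{k_i|\mathcal{S}|}{|\Omega|-|\mathcal{S}|}$. The main obstacle is the middle step: everything hinges on the $K$-fixed space splitting exactly between the two partitions $[n]$ and $[n-1,1]$, which is special to $\lambda = [n-1,1]$; for a larger $\lambda$ the fixed space would be higher-dimensional and the orthogonal complement of $\mathbf{1}$ would no longer be forced into a single module.
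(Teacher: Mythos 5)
Your proposal is correct, and its first step (deducing $a_i=0$, reading off the quotient-matrix eigenvalue $\theta=-\frac{k_i|\mathcal{S}|}{|\Omega|-|\mathcal{S}|}$, and lifting to $w = v_{\mathcal{S}}-\frac{|\mathcal{S}|}{|\Omega|}\mathbf{1}$) is exactly how the paper begins; the divergence is in how $w$ gets tied to the $[n-1,1]$-module. The paper turns the $G$-orbit of $w$ into a cyclic $\CC G$-module $V=\operatorname{span}\left\{v_{\sigma(\mathcal{S})}-\frac{|\mathcal{S}|}{|\Omega|}\mathbf{1} : \sigma\in\sym(n)\right\}$, builds an explicit homomorphism $f:V\to W$, where $W\cong S^{[n]}\oplus S^{[n-1,1]}$ is the permutation module on $G/K$ (checking well-definedness of $f$ through the orbit partition), and then identifies $V\cong S^{[n-1,1]}$ by a dimension argument; since $V$ lies in the $\theta$-eigenspace, the theorem follows. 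You instead work entirely inside $\CC\Omega$: Frobenius reciprocity applied to $\ind{K}{G}=\chi^{[n]}+\chi^{[n-1,1]}$ shows the $K$-fixed subspace of $\CC\Omega$ is two-dimensional, split one-dimensionally between $S^{[n]}$ and $S^{[n-1,1]}$; since $w$ is $K$-fixed and orthogonal to $\mathbf{1}$ (which spans $S^{[n]}$), orthogonality of distinct isotypic components places $w$ in the unique copy of $S^{[n-1,1]}$, and Schur's lemma together with multiplicity-freeness forces $\xi_{[n-1,1]}=\theta$. Both routes rest on the same character identity for $\ind{K}{G}$, but they use it on opposite sides: the paper on the target of a map out of the cyclic module, you on the ambient module via fixed-vector dimensions. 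Your version avoids constructing and verifying $f$, and it also handles explicitly a point the paper leaves implicit (a submodule of $W$ of dimension greater than one could a priori be all of $W$; excluding this needs $V\perp\mathbf{1}$, which is your orthogonality step); conversely, the paper's construction buys an explicit spanning set for the entire $[n-1,1]$-module by shifted characteristic vectors, not just the single vector $w$. Your closing observation about why the argument is special to $\lambda=[n-1,1]$ — for larger $\lambda$ the $K$-fixed space spreads over more than two Specht modules — is also accurate and is precisely why the theorem is stated only in this case.
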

\begin{proof}

	We will use the same idea as the proof in \cite{godsil2016algebraic}. We assume that $\mathcal{S}$ is a coclique of $X_i$, for some $0\leq i\leq k-1$. Let $P = P_\pi$ be the characteristic matrix of the equitable partition $\pi$. That is, $P$ is the $|\Omega| \times 2$ matrix whose rows are indexed by $\Omega$ and whose columns are indexed by $\pi$ such that $P(x,X) = 1$ if $x\in X$ and $0$ otherwise, for $x\in \Omega$ and $X \in \{\mathcal{S},\Omega\setminus \mathcal{S}\}$. By Lemma~\ref{lem:eigenvector-lift}, if $v$ is an eigenvector of $A(X_i/\pi)$ with eigenvalue $-\frac{k_i|\mathcal{S}|}{|\Omega|-|\mathcal{S}|}$, then $Pv$ is an eigenvector of $A(X_i)$ with the same eigenvalue. A basic eigenvector of $A(X_i/\pi)$ corresponding to $-\frac{k_i|\mathcal{S}|}{|\Omega|-|\mathcal{S}|}$ is
	\begin{align*}
		v = 
		\begin{bmatrix}
			1-\frac{|\mathcal{S}|}{|\Omega|}\\
			-\frac{|\mathcal{S}|}{|\Omega|}
		\end{bmatrix}.
	\end{align*}
	For any $xH \in \Omega$, it is not hard to verify that 
	\begin{align*}
		Pv(xH)
		=
		\begin{cases}
			1-\frac{|\mathcal{S}|}{|\Omega|} &\mbox{ if } xH\in \mathcal{S}\\
			- \frac{|\mathcal{S}|}{|\Omega|} &\mbox{ otherwise}
		\end{cases}.
	\end{align*}
	In other words, $w:=Pv = v_\mathcal{S} - \frac{|\mathcal{S}|}{|\Omega|}\mathbf{1}$, where $v_\mathcal{S}$ is the characteristic vector of $\mathcal{S}$. For any $\sigma \in G = \sym(n)$, define $\sigma(w):= v_{\sigma(\mathcal{S})}-\frac{|\mathcal{S}|}{|\Omega|} \mathbf{1}$. This action is well defined and it makes the subspace 
	\begin{align*}
		V := \left\{ \sigma(w) :\ \sigma \in \sym(n) \right\}
	\end{align*}
	into a $\CC G$-module. Moreover, the set $V$ is a subspace of the eigenspace of $X_i$ corresponding to the eigenvalue $-\frac{k_i|\mathcal{S}|}{|\Omega|-|\mathcal{S}|}$. We claim that $V$ is in fact the $[n-1,1]$-module. 
	
	Let $W$ be the $\CC G$-module corresponding to the character $\ind{K}{G} = \chi^{[n]} + \chi^{[n-1,1]}$. It is clear that $W = S^{[n]} \oplus S^{[n-1,1]}$. The $\CC G$-module $W$ can be identified with the set of all functions $L(\sym(n))$ that are constant on $K$. A basis of the vector space $L(\sym(n))$ can be easily determined. For any $xK \in G/K$, define
	\begin{align*}
		\delta_{xK}(\sigma) :=
		\begin{cases}
			1 &\mbox{ if }\sigma \in xK\\
			0 & \mbox{ otherwise}.
		\end{cases}
	\end{align*}
	
	One can verify that $\{ \delta_{xK} :\ xK\in G/K \}$ is a basis of $W$. Now, define the map $f:V \to W$ by
	\begin{align*}
		 f\left(\sigma(w)\right) := \delta_{\sigma K} - \frac{|\mathcal{S}|}{|\Omega|} \sum_{xK \in G/K} \delta_{\sigma xK}.
	\end{align*}
	The map $f$ is well defined since 
	\begin{align*}
		\mbox{$\sigma (w) = \tau(w) \Leftrightarrow$  $v_{\sigma(\mathcal{S})} = v_{\tau(S)} \Leftrightarrow \tau^{-1}\sigma (\mathcal{S}) = \mathcal{S}$.}
	\end{align*}
	Hence, $\tau^{-1}\sigma$ leaves the partition $\pi$, which consists of orbits of $K$, invariant. Therefore, $\sigma K = \tau K$, which translates into $f$ being well defined. Moreover, the map $f$ is a $\CC G$-module homomorphism. Hence, $V$ is isomorphic to a submodule of $W = S^{[n]}\oplus S^{[n-1,1]}$. As $V\neq 0$ and $\dim (V) >1$, it is clear that $V$ is isomorphic to $S^{[n-1,1]}$.
\end{proof}

\section{Proof of Theorem~\ref{thm:main1}}\label{sect:proof1}
Let $G = \sym(2k)$ and $H = \sym(k) \wr \sym(2)$. The rank of the group $G$ acting on $G/H$ is $\lfloor\frac{k}{2}\rfloor+1$ since by \cite{godsil2010multiplicity}, we have
\begin{align*}
	\ind{H}{G} = \sum_{i = 0}^{\lfloor \frac{k}{2} \rfloor} \chi^{[2k-2i,2i]}.
\end{align*}
Therefore, the second largest in dominance ordering in $\Lambda(n,H)$ is $[2k-2,2]$. We note that the combinatorial objects that correspond to the cosets of $H$ in $G$ are the \emph{uniform partitions} of $[2k]$ into $2$ blocks of size $k$. That is, the collections
\begin{align*}
	U_{k} := \left\{ \{B_1,B_2\} :\ B_1\cup B_2 = [2k], \ B_1\cap B_2 = \varnothing,\  |B_1| = |B_2| = k \right\}.
\end{align*}
The action of the symmetric group $\sym(2k)$ on $[2k]$ induces an action on $U_k$. It is not hard to see that $\sym(k) \wr \sym(2)$ is the stabilizer of the partition $\{\{1,2,\ldots,k\},\{k+1,k+2,\ldots,2k\}\} \in U_k$. 

For any $A \in \binom{[2k]}{k}$, we let $\underline{A}$ be the complement of $A$ in $[2k]$.

\begin{lem}
	The orbitals of $G = \sym(2k)$ in its action on $U_k$ are of the form
		\begin{align*}
			\mathcal{O}_i = \left\{ \left(\mathcal{B},\mathcal{B}^\prime\right) \in U_{k} \times U_k :\ \mbox{there exists $B\in \mathcal{B},\ B^\prime \in \mathcal{B}^\prime$ such that $|B\cap B^\prime| = k-i$} \right\},
		\end{align*}
		for $0\leq i \leq \lfloor \frac{k}{2} \rfloor$.
\end{lem}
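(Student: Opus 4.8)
The plan is to use the common refinement of two partitions to reduce the claim to the transitivity of $\sym(2k)$ on ordered set-partitions of $[2k]$ with prescribed block sizes. Fix $\mathcal{B} = \{B_1,B_2\}$ and $\mathcal{B}' = \{B_1',B_2'\}$ in $U_k$ and set $a = |B_1\cap B_1'|$. Since $[2k] = B_1\sqcup B_2 = B_1'\sqcup B_2'$ and $|B_1| = |B_1'| = k$, a short count on the four cells of the common refinement gives
\begin{align*}
	|B_1\cap B_1'| = |B_2\cap B_2'| = a, \qquad |B_1\cap B_2'| = |B_2\cap B_1'| = k-a.
\end{align*}
Thus the unordered pair $(\mathcal{B},\mathcal{B}')$ determines the unordered pair of intersection sizes $\{a,k-a\}$. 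First I would record this computation and set $i := \min(a,k-a)$, so that $i\in\{0,1,\ldots,\lfloor k/2\rfloor\}$ and $k-i = \max(a,k-a)$ is realised by some block pair; hence every $(\mathcal{B},\mathcal{B}')$ lies in $\mathcal{O}_i$.

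Next I would verify that $\mathcal{O}_0,\ldots,\mathcal{O}_{\lfloor k/2\rfloor}$ are pairwise disjoint. A pair belongs to $\mathcal{O}_j$ exactly when $k-j\in\{a,k-a\}$, i.e.\ when $j\in\{i,\,k-i\}$. As $k-i\ge\lceil k/2\rceil$, the only index lying in the admissible range $\{0,\ldots,\lfloor k/2\rfloor\}$ is $i$ itself (the two values coincide precisely when $k$ is even and $i = k/2$). Together with the previous paragraph this gives $U_k\times U_k = \bigsqcup_{i=0}^{\lfloor k/2\rfloor}\mathcal{O}_i$; non-emptiness of each $\mathcal{O}_i$ follows by exhibiting one pair, e.g.\ by swapping $i$ symbols across the standard partition $\{\{1,\ldots,k\},\{k+1,\ldots,2k\}\}$.

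The heart of the argument is to show that each $\mathcal{O}_i$ is a single orbit. Given $(\mathcal{B},\mathcal{B}'),(\mathcal{C},\mathcal{C}')\in\mathcal{O}_i$, I relabel the blocks of each (using that the partitions are unordered) so that $|B_1\cap B_1'| = |C_1\cap C_1'| = k-i$. Both common refinements then consist of four cells of sizes $k-i,i,i,k-i$, the cells of size $k-i$ being $B_1\cap B_1',\,B_2\cap B_2'$ in the first and $C_1\cap C_1',\,C_2\cap C_2'$ in the second. Since $\sym(2k)$ acts transitively on ordered partitions of $[2k]$ into blocks of prescribed sizes, there is $g\in\sym(2k)$ carrying $B_1\cap B_1'\mapsto C_1\cap C_1'$, $B_1\cap B_2'\mapsto C_1\cap C_2'$, $B_2\cap B_1'\mapsto C_2\cap C_1'$ and $B_2\cap B_2'\mapsto C_2\cap C_2'$. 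This $g$ sends $B_1\mapsto C_1$, $B_2\mapsto C_2$, $B_1'\mapsto C_1'$, $B_2'\mapsto C_2'$, whence $\mathcal{B}\mapsto\mathcal{C}$ and $\mathcal{B}'\mapsto\mathcal{C}'$ as unordered partitions, so $g\cdot(\mathcal{B},\mathcal{B}') = (\mathcal{C},\mathcal{C}')$.

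Combining disjointness, covering, and the single-orbit property shows the orbitals are exactly $\mathcal{O}_0,\ldots,\mathcal{O}_{\lfloor k/2\rfloor}$; the count $\lfloor k/2\rfloor+1$ matches the rank read off from $\ind{H}{G} = \sum_{i=0}^{\lfloor k/2\rfloor}\chi^{[2k-2i,2i]}$, a useful consistency check. The only delicate points are relabelling the blocks consistently (legitimate because the partitions are unordered, and symmetric because the intersection-size multiset $\{a,k-a\}$ is itself symmetric) and confirming that the admissible index is forced into $\{0,\ldots,\lfloor k/2\rfloor\}$; this index bookkeeping is where I expect the main, though modest, care to be needed.
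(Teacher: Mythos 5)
Your proof is correct, but it follows a genuinely different route from the paper's. The paper does not argue directly on the relations $\mathcal{O}_i$ at all: it fixes the representatives $\mathcal{B}_i = \{X_i,\underline{X}_i\}$ with $X_i = \{1,\ldots,k-i,k+1,\ldots,k+i\}$, shows by a case analysis on the stabilizer $\sym(k)\wr\sym(2)$ of $\mathcal{B}_0$ that these $\lfloor k/2\rfloor+1$ points lie in pairwise distinct suborbits, and then concludes by a counting argument: since the rank of the action is already known to be $\lfloor k/2\rfloor+1$ (from the decomposition $\ind{H}{G}=\sum_{i=0}^{\lfloor k/2\rfloor}\chi^{[2k-2i,2i]}$ cited from the multiplicity-free classification), the exhibited suborbits must exhaust all of them. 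You instead prove the statement from scratch: the common-refinement computation shows the $\mathcal{O}_i$ partition $U_k\times U_k$, and the transitivity of $\sym(2k)$ on ordered set-partitions with prescribed block sizes shows each $\mathcal{O}_i$ lies in a single orbit; together with the (immediate) $G$-invariance of each $\mathcal{O}_i$ under permutations, this forces each $\mathcal{O}_i$ to be exactly one orbital. What your approach buys is self-containedness and slightly more information: you never invoke the representation-theoretic rank, and in fact you recover it as a corollary, whereas the paper's argument is shorter precisely because it leans on that external input and thereby avoids having to prove that each relation is a single orbit (the fusion question), getting it for free from the cardinality match. One small point to make explicit in your write-up: your final inference (disjointness, covering, and single-orbit containment imply the $\mathcal{O}_i$ are the orbitals) also uses that each $\mathcal{O}_i$ is preserved by the action, since otherwise two distinct $\mathcal{O}_i$ could a priori sit inside one orbit; this is immediate because permutations preserve intersection cardinalities, but it deserves a sentence.
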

\begin{proof}
	It is worth noting that if two partitions $\mathcal{B}= \{B,\underline{B}\}$ and $\mathcal{B}^\prime = \{ B^\prime,\underline{B}^\prime \}$ of $U_{k}$ are such that $|B\cap B^\prime| = k-i$, then we must have $|B\cap \underline{B}^\prime | = i$. 
	
	For any $0\leq i \leq k$, let $X_i = \{ 1,2,\ldots,k-i,k+1,k+2,\ldots,k+i \}$. Note that $X_0 = \{ 1,2,\ldots,k \}$ and $X_k = \{ k+1,k+2,\ldots,2k \}$. Let $\ell = \lfloor\frac{k}{2}\rfloor$ and for $0 \leq i \leq \ell$, define $\mathcal{B}_i := \{ X_i,\underline{X}_i \}$. We claim that $\{ \mathcal{B}_i: 0\leq i\leq \ell \}$ consists of a complete list of representatives from distinct suborbits of $\stab{\sym(2k)}{\mathcal{B}_{0}} = \sym(k) \wr \sym(2)$. It is clear that $X_0$ is in a single orbit of $\sym(k)\wr \sym(2)$. Assume that there exists $i< j$ in the set $\{1,\ldots,\ell\}$ such that $\mathcal{B}_i$ and $\mathcal{B}_j$ are in the same orbit of $\sym(k) \wr \sym(2)$. If $\sigma\in \sym(k) \wr \sym(2)$ such that $\sigma(\mathcal{B}_i) = \mathcal{B}_j$, then either $\sigma(X_i) = X_j$ or $\sigma(X_i) = \underline{X_j}$. 
	\\ \\
	{\it Case~1:} Assume that $\sigma(X_i) = X_j$. If $\sigma(X_0) = X_0$, then $\sigma$ must map the elements $X_i\cap X_0 = \{1,2,\ldots,k-i\}$ to $X_j\cap X_0 =\{ 1,2,\ldots,k-j \}$. By the Pigeonhole Principle, this cannot happen since $i<j$.
	Similarly, if $\sigma(X_0) = \underline{X_0} = X_k$, then $\sigma$ has to map $\{k+1,k+2,\ldots,k+i\}$ to $\{ 1,2,\ldots,k-j \}$. This is only possible if $k-j = i$. As $i$ and $j$ are distinct and at most $\ell \leq\frac{k}{2}$, we conclude this case also cannot happen.
	\\ \\
	{\it Case~2:} Assume that $\sigma(X_i) = \underline{X_j}$. Note that $\underline{X_j} = \{ k-j+1,\ldots,k,k+j+1,\ldots,2k \}$. By a careful analysis of the image of $X_0$ by $\sigma$, we also conclude that this case is not possible. 
	
	Consequently, the set $\{ \mathcal{B}_i : 0\leq i\leq \ell \}$ consists of elements in different orbits of $\sym(k) \wr \sym(2)$. Since the cardinality of this set and the rank of $G$ acting on $U_k$ coincide, we conclude that the orbitals of $G$ on $U_k$ are those listed in the statement of the lemma. We obtain the rest by making $\sym(2k)$ act on every representative of the orbitals.
\end{proof}

Consequently, all orbital digraphs are symmetric, implying that they are all undirected graphs. Using this lemma, we can show that the orbits of $K = \sym([2k-2,2])$ are not cocliques of any orbital graph in this association scheme. The orbits of $K$ are $\mathcal{S}$ and $U_{k} \setminus \mathcal{S} $, where 
\begin{align*}
	\mathcal{S} = \left\{ \{B,\underline{B}\} :\ 2k-1,2k \in B \right\}.
\end{align*}
In the next theorem, we claim that neither of $\mathcal{S}$ nor $U_k\setminus\mathcal{S}$ is a coclique of a graph in the orbital scheme corresponding to $\sym(2k)$ acting on $U_k$.
\begin{thm}
	The collection $\mathcal{S}$ is not a coclique of the orbital graph corresponding to $\mathcal{O}_i$, for any $0\leq i \leq \lfloor\frac{k}{2}\rfloor$.
\end{thm}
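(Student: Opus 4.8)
The plan is to apply the coclique criterion for equitable partitions established just before the theorem: $\mathcal{S}$ is a coclique of $X_i$ if and only if $a_i=0$, where (in the notation of Lemma~\ref{lem:eigenvalues-equitable-partition}) $a_i$ is the number of elements of $\mathcal{S}$ that are $\mathcal{O}_i$-related to a fixed element of $\mathcal{S}$. Since $\pi=\{\mathcal{S},U_k\setminus\mathcal{S}\}$ is equitable for every orbital digraph, this count does not depend on the chosen base vertex, so it suffices to exhibit, for each $i$, a single edge of $X_i$ whose two endpoints both lie in $\mathcal{S}$; that alone forces $a_i\ge 1\neq 0$. For $i=0$ the relation $\mathcal{O}_0$ is the diagonal, so a fixed vertex is $\mathcal{O}_0$-related to itself and $a_0=1\neq 0$; hence the case $i=0$ is immediate, and from now on I would assume $1\le i\le\lfloor k/2\rfloor$.

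First I would fix the base partition $\mathcal{B}_0=\{B_0,\underline{B_0}\}\in\mathcal{S}$ with $B_0=\{k+1,k+2,\ldots,2k\}$, so that $\{2k-1,2k\}\subseteq B_0$ and $\underline{B_0}=\{1,2,\ldots,k\}$. Given $i$, I would then build a block $B'$ of size $k$ by taking $2k-1$ and $2k$, together with $k-i-2$ further elements of $B_0\setminus\{2k-1,2k\}=\{k+1,\ldots,2k-2\}$ and $i$ elements of $\underline{B_0}=\{1,\ldots,k\}$. Setting $\mathcal{B}'=\{B',\underline{B'}\}$, both marked points $2k-1$ and $2k$ lie in the same block $B'$, so $\mathcal{B}'\in\mathcal{S}$.

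It then remains to verify three routine points. \emph{(1)} The construction is well defined precisely when the prescribed block sizes are nonnegative, i.e. $k-i-2\ge 0$ and $i\le k$; the binding constraint is $i\le k-2$, which holds because $i\le\lfloor k/2\rfloor\le k-2$ whenever $k\ge 3$ (the value $k=2$ is degenerate, since then $\mathcal{S}$ consists of a single partition). \emph{(2)} By construction $|B_0\cap B'|=2+(k-i-2)=k-i$, so $(\mathcal{B}_0,\mathcal{B}')\in\mathcal{O}_i$; moreover $i\le\lfloor k/2\rfloor$ gives $k-i\ge i$, so $k-i$ is the larger of the two intersection sizes and the orbital index equals exactly $i$, with no collapse to another $\mathcal{O}_j$. \emph{(3)} $\mathcal{B}'\neq\mathcal{B}_0$, because $|B_0\cap B'|=k-i<k$ forces $B'\neq B_0$, while $B'\neq\underline{B_0}$ since $\underline{B_0}$ contains neither $2k-1$ nor $2k$. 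Hence $\mathcal{B}_0$ and $\mathcal{B}'$ are two distinct elements of $\mathcal{S}$ adjacent in $X_i$, so $a_i\ge 1$ and $\mathcal{S}$ is not a coclique of $X_i$.

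Since the argument reduces to an explicit construction, there is no deep obstacle; the one point needing care is the feasibility of the block sizes at the extreme value $i=\lfloor k/2\rfloor$, where $B'$ must still keep both marked points $2k-1,2k$ inside $B_0$ while transferring $i$ of the other points across the partition. This is exactly the inequality $k-i\ge 2$, which is where the bound $i\le\lfloor k/2\rfloor$ (equivalently $k\ge 3$) enters, and it is also the reason the genuinely small case $k=2$ must be set aside.
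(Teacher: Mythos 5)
Your proof is correct and takes essentially the same approach as the paper: both arguments exhibit an explicit pair of partitions in $\mathcal{S}$ whose blocks containing $\{2k-1,2k\}$ meet in exactly $k-i$ points, i.e.\ an edge of $X_i$ with both endpoints in $\mathcal{S}$. If anything, your version is the more careful one, since the paper's explicit sets $A,B$ use the piece $\{k+i+1,\ldots,k+2i\}$, which collides with $\{2k-1,2k\}$ at the extreme index $i=\lfloor k/2\rfloor$, and the paper silently skips the degenerate cases $i=0$ and $k=2$ that you handle explicitly.
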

\begin{proof}
	Let $1\leq i\leq \ell = \lfloor\frac{k}{2} \rfloor$. In the orbital graph of $\mathcal{O}_i$, by definition, there is an edge between $\{ B_1,B_2 \}$ and $\{B_1^\prime,B_2^\prime\}$ if $|B_1 \cap B_1^\prime| \in \{i,k-i\}$. Consider the sets 
	\begin{align*}
		A &= \{ 1,2,\ldots,k-i-2,k+1,k+2,\ldots,k+i,2k-1,2k \} \\
		B &= \{1,2,\ldots,k-i -2,k+i+1,k+i+2,\ldots,k+2i,2k-1,2k \}.
	\end{align*}
	Since $|A\cap B| = k-i$, the partitions $\{A,\underline{A}\}$ and $\{B,\underline{B}\}$ are adjacent in the orbital graph of $\mathcal{O}_i$. As these partitions are in $\mathcal{S}$, we conclude that $\mathcal{S}$ is not a coclique of the orbital graph of $\mathcal{O}_i$, for any $1\leq i\leq \ell$. {This completes the proof.}
\end{proof}
\begin{rmk}
	The orbital graph of $\mathcal{O}_1$ is called the folded Johnson graph $\overline{J}(2k,k)$ (see \cite[Section~9.1]{brouwer1989distance} for details). This graph encodes certain combinatorial objects as its cocliques. Two partitions $\mathcal{B}$ and $\mathcal{B}^\prime$ of $U_k$ are called partially $2$-intersecting if there exists $B \in \mathcal{B}$ and $B^\prime \in \mathcal{B}^\prime$ such that $|B\cap B^\prime| \geq 2$. It is straightforward to verify that a maximum partially $2$-intersecting family of $U_k$ is a coclique in the orbital graph of $\mathcal{O}_1$ (i.e., the folded Johnson graph $\overline{J}(2k,k)$), and vice versa. As far as we know, there is nothing known about the cocliques of this graph.  
\end{rmk}

\section{The orbital scheme from line~1}\label{sect:line1}
In this section, we will consider the Schurian association scheme obtained from the action of $\sym(n)$ on the cosets of $\sym(k) \times \sym(n-k)$. We will exhibit an action on certain combinatorial objects that is permutation equivalent to the latter. To this end, we will recall a way of constructing the $k$-subsets of $[n]$ with an equivalence relation.

Let $n$ be a positive integer and $2\leq k\leq n$. Consider the $\mathcal{I}_{n,k}:= \left\{ f:[k] \to [n] :\ f\mbox{ is injective} \right\}$. An element of $\mathcal{I}_{n,k}$ can be represented as a sequence of the form $(a_1,a_2,\ldots,a_k)$. Define the relation $\mathcal{R}$ on $\mathcal{I}_{n,k}$ such that $(a_1,a_2,\ldots,a_k) \mathcal{R} (b_1,b_2,\ldots,b_k)$ if there exists $\sigma \in \sym(k)$ such that
\begin{align*}
	 a_i = b_{\sigma(i)}, \mbox{ for } i\in \{1,2,\ldots,k\} .
\end{align*}
It is not hard to prove that the relation $\mathcal{R}$ is in fact an equivalence relation. The equivalence classes of $\mathcal{R}$ are of the form
\begin{align*}
	\overline{(a_1,a_2,\ldots,a_k)} = \left\{ (a_{\sigma(1)},a_{\sigma(2)},\ldots,a_{\sigma(k)}) \in \mathcal{I}_{n,k} :\ \sigma \in \sym(k) \right\}.
\end{align*}
These equivalence classes are exactly the $k$-subsets of $[n]$. We will denote the equivalence class $\overline{(a_1,a_2,\ldots,a_k)}$ by $\{a_1,a_2,\ldots,a_k\}$ from now on. We will also define $\binom{[n]}{k}$ to be the set of all $k$-subsets of $[n]$.

Given a $k$-subset $A = \{a_1,a_2,\ldots,a_k\}$ of $[n]$, it is clear that the setwise stabilizer of $A$ in the symmetric group $\sym(n)$ is equal to $\sym(A) \times \sym([n]\setminus A ) \cong \sym(k) \times \sym(n-k)$. In fact, it is not hard to see that the action of $\sym(n)$ on the cosets of $\sym(k) \times \sym(n-k)$ is permutation equivalent to the action of $\sym(n)$ on the $k$-subsets of $[n]$.

The association scheme which arises from the action of $\sym(n)$ on the $k$-subsets of $[n]$ is well known and well studied. The orbitals of this action are
\begin{align}
	\mathcal{O}_i = \left\{ (A,B) :\ \mbox{$A,B \in \binom{[n]}{k}$ such that } |A\cap B| = k-i \right\},
\end{align}
for $0\leq i\leq k$. The corresponding association scheme is the \emph{Johnson scheme} $\mathcal{J}(n,k)$. The permutation character corresponding to the action of $\sym(n)$ on the $k$-subsets is given by
\begin{align}
	\ind{\sym(k) \times \sym(n-k)}{\sym(n)} = \sum_{i=0}^k \chi^{[n-i,i]}.
\end{align}
Hence, the second largest partition of $\Lambda(n,\sym(k)\times \sym(n-k))$ is equal to $[n-1,1]$. The orbits of the Young subgroup $\sym([n-1,1])$ on the $k$-subsets are $S$ and $\binom{[n]}{k}\setminus S$, where
\begin{align}
	S&= \left\{ A \in \binom{[n]}{k} :\ n\in A \right\}.\label{eq:max-coclique1}
\end{align}

The graph obtained from the orbital $\mathcal{O}_k$ has the property that two $k$-subsets of $[n]$, $A$ and $B$, are adjacent if $A\cap B = \varnothing$. This graph is known as the \emph{Kneser graph} $K(n,k)$. By the well-known Erd\H{o}s-Ko-Rado theorem, the set $S$ in \eqref{eq:max-coclique1} is a maximum coclique of $K(n,k)$. It is also well known that the eigenvalue afforded by the $[n-1,1]$-module is the least eigenvalue of $K(n,k)$, which is equal to 
\begin{align*}
	-\frac{\binom{n-k}{k}\binom{n-1}{k-1}}{\binom{n-1}{k}}.
\end{align*}
\section{The orbital scheme from line~2 and line~3}\label{sect:line2-3}
In this section, we prove that the answer to Question~\ref{qst:main} is affirmative for the Gelfand pair $(G,H)$, where $G = \sym(n)$ and $H \in \left\{ \alt(k) \times \sym(n-k),\sym(k) \times \alt(n-k) \right\}$. We will only give the proof for the case $H = \alt(k) \times \sym(n-k)$ since the other case is similar.

\subsection{The quasi $k$-subsets of $[n]$}
Let $n$ be a positive integer and $k\leq n$. Recall that $\mathcal{I}_{n,k}$ is the set of all injective maps from $[k]$ to $[n]$. Define the relation on $\mathcal{I}_{n,k}$ by 
\begin{align}
	(a_1,a_2,\ldots,a_k) \mathcal{R} (b_1,a_2,\ldots,b_k) \Leftrightarrow \exists \sigma \in \alt(k) \mbox{ such that } b_i = a_{\sigma(i)},
\end{align}
for all $i\in \{1,2,\ldots,k\}$. It is clear that $\mathcal{R}$ is an equivalence relation on $\mathcal{I}_{n,k}$. For any $(a_1,a_2,\ldots,a_k)\in \mathcal{I}_{n,k}$, define the corresponding equivalence class of $\mathcal{R}$ to be
\begin{align*}
	\overline{\left(a_1,a_2,\ldots,a_k\right)} &:= \left\{ \left(a_{\sigma(1)},a_{\sigma(2)},\ldots,a_{\sigma(k)}\right) :\ \sigma \in \alt(k) \right\}.
\end{align*}

Next, let us compute the number of equivalence classes of $\mathcal{R}$. We claim that each $k$-subset $\{a_1,a_2,\ldots,a_k\}$ determines two equivalence classes of $\mathcal{R}$. To see this, let $T$ be the set of all injective maps from $[k]$ to $A = \{a_1,a_2,\ldots,a_k\}$ (i.e., bijections). The group $\sym(k)$ acts naturally on $T$ by permuting the entries. By the orbit-counting lemma on the action of $\alt(k)$ on $T$, we  know that
\begin{align*}
	\frac{2}{k!} \sum_{\sigma \in \alt(k)} \left|\left\{ (b_1,b_2,\ldots,b_k) \in T :\ b_{\sigma(i)} = b_i, \forall i\in \{ 1,2,\ldots,k \} \right\}\right| = \frac{2}{k!} k! = 2.
\end{align*}
In other words, the alternating group $\alt(k)$ admits two orbits on $T$. The two equivalence classes of $\mathcal{R}$ determined by $A = \{a_1,a_2,\ldots,a_k\}$ are
\begin{align*}
	\{a_1,a_2,a_3\ldots,a_k\}^+:=\overline{(a_1,a_2,a_3\ldots,a_k)} \mbox{ and } \{a_1,a_2,a_3\ldots,a_k\}^- := \overline{(a_2,a_1,a_3,\ldots,a_k)}.
\end{align*}
 We will call the collection of all equivalence classes of $\mathcal{R}$ the \emph{quasi $k$-subset} of $[n]$ and we will denote it by $\binom{[n]}{k}^{\pm}$. The classes $\{a_1,a_2,a_3\ldots,a_k\}^+$ and $\{a_1,a_2,a_3\ldots,a_k\}^-$ are called the even and odd quasi $k$-subsets of $\{a_1,a_2,a_3\ldots,a_k\}$, respectively. Note that the number of quasi $k$-subsets of $[n]$ is $2\binom{n}{k}$.

\subsection{Action of the $\sym(n)$ on quasi $k$-subsets}

Let $n\geq 3$. Given a $k$-tuple $A = (a_1,a_2,\ldots,a_k)\in \mathcal{I}_{n,k}$ and $\sigma \in \sym(n)$, the permutation $\sigma$ acts naturally on $A$ via $\sigma (A):= \left(\sigma(a_1),\sigma(a_2),\ldots,\sigma(a_k)\right)$. Assume that the elements of $A$ are ordered in an increasing way, i.e., $a_1<a_2<\ldots<a_k$.
\begin{itemize}
	\item For any $i\in \{1,2,\ldots,k\}$, define 
	\begin{align}
		\mathsf{M} (\sigma(A),i):= \min_{i\leq j\leq k} \left\{ \sigma(a_j) :\ \sigma(a_j) < \sigma(a_i) \right\}.
	\end{align}
	\item Consider now the sequences
	\begin{align*}
		\begin{cases}
			\tau_0(A) &= \sigma(A)\\
			\tau_i(A) &= \mu(A,i) \left(\tau_{i-1}(A)\right), \mbox{ for } 1\leq i\leq k,
		\end{cases}
	\end{align*}
	where $\mu(A,i):=\left(\mathsf{M}(\tau_{i-1}(A),i)\ \tau_{i-1}(a_i)\right)$ is the transposition that interchanges $\mathsf{M}(\tau_{i-1}(A),i) $ and $ \tau_{i-1}(a_i)$. 
	Now, define the product of transpositions
	\begin{align}
		\mathsf{sort}(\sigma(A)) := \mu(A,k) \ldots \mu(A,2)\mu(A,1). 
	\end{align}
	It is not hard to see that the permutation $\mathsf{sort}(\sigma(A))$ sorts the entries of $\sigma(A)$ into an increasing order. 
	\item
	Recall that the homomorphism $\mathsf{sgn}$ is the unique homomorphism from $\sym(n)$ to $\{1,-1\}$ which assigns even permutations to $1$ and odd permutations to $-1$. For any $\sigma\in\sym(n)$ and $A = (a_1,a_2,\ldots,a_k)$, define
	\begin{align}
		\sgn{\sigma(A)} &:= 
		\begin{cases}
			+ & \mbox{ if }\sgn{\mathsf{sort}(\sigma(A))} = 1,\\
			- & \mbox{ otherwise.}
		\end{cases}
	\end{align}
	\item Given a set $S = \{a_1,a_2,\ldots,a_k \}$, define $\hat{S} := (a_{i_1},a_{i_2},\ldots,a_{i_k}) \in \mathcal{I}_{n,k}$ such that $\{i_1,i_2,\ldots,i_k\} = \{1,2,\ldots,k\}$ and $a_{i_j}< a_{i_{j+1}}$, for any $j\in \{1,2,\ldots,k-1\}$.
\end{itemize}

We can make $\sym(n)$ act on quasi $k$-subsets of $[n]$ as follows. For any $\sigma\in \sym(n)$ and a $k$-subset $S\in \binom{[n]}{k}$, we let
\begin{align}
	\sigma(S^*) := 
	\begin{cases}
		\sigma(S)^{\sgn{\sigma(  \hat{S}     )}} & \mbox{ if }*=+,\\
		\sigma(S)^{-\sgn{\sigma(\hat{S})}} & \mbox{ if }*=-, 
	\end{cases}\label{eq:action}
\end{align}
with the convention that $(-)(-) =+, \ (+)(+) = +,\ (-)(+) = (+)(-) = -$. We let the reader verify that this induces a group action of $\sym(n)$ on quasi $k$-subsets of $[n]$.

Let $A = \{1,2,\ldots,k\}$. Next, we determine the stabilizer of $A^+$ in $\sym(n)$. If $\sigma\in\sym(n)$ fixes $A^+$, then it must fix the set $A = \{1,2,\ldots,k\}$ setwise. Hence, $\sigma \in \sym(k) \times \sym(n-k)$. By definition, we have $\sigma(A^+) = (\sigma(A))^{\sgn{\sigma(\hat{A})}}$. On the one hand, we have
\begin{align*}
		\sigma(\hat{A})=	\sigma((1,2,\ldots,k)) = \sigma_{|\{1,2\ldots,k\}}((1,2,\ldots,k)) =  (\sigma(1),\sigma(2),\ldots,\sigma(k)).
\end{align*}
On the other hand, if $\tau = \mathsf{sort}(\sigma(\hat{A}))$, then we have 
\begin{align*}
	 \tau((\sigma(1),\sigma(2),\ldots,\sigma(k))) =  (1,2,\ldots,k) = \hat{A}.
\end{align*}
Since $\sigma_{|\{1,2,\ldots,k\}}$ and $\tau$ are permutations of $\{1,2,\ldots,k\}$, we have that $\sigma_{|\{1,2,\ldots,k\}} = \tau^{-1}$.
If $\sigma_{|\{1,2,\ldots,k\}}$ is an odd permutation, then it is clear that $\sigma(A^+) = A^-$ and $\sigma(A^-) = A^+$. If $\sigma_{|\{1,2,\ldots,k\}}$ is an even permutation, then $\sigma(A^+) = A^+$ and $\sigma(A^-) = A^-$. Therefore, 
\begin{align*}
	\stab{\sym(n)}{A^+} = \stab{\sym(n)}{A^-}=  \alt(k) \times \sym(n-k).
\end{align*}
We state the following result whose proof is omitted.
\begin{lem}
	The action of $\sym(n)$ on the cosets of $\alt(k)\times \sym(n-k)$ is permutation equivalent to the action of $\sym(n)$ on quasi $k$-subsets of $[n]$.
\end{lem}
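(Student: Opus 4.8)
The plan is to use the standard correspondence between a transitive group action and a coset action: if a finite group $G$ acts transitively on a set $\Omega$ and $\omega\in\Omega$ has stabilizer $G_\omega = H$, then $gH\mapsto g(\omega)$ is a well-defined $G$-equivariant bijection $G/H\to\Omega$, so the two actions are permutation equivalent. Here $G=\sym(n)$, the set $\Omega$ is $\binom{[n]}{k}^{\pm}$ equipped with the action \eqref{eq:action}, and the base point is $A^+$ with $A=\{1,2,\ldots,k\}$, whose stabilizer was just shown to equal $\alt(k)\times\sym(n-k)$. Thus it remains only to check that \eqref{eq:action} is a genuine action and that it is transitive.

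For the first point, which the text leaves to the reader, the cleanest route is to reinterpret the sign. For $\sigma\in\sym(n)$ and a $k$-subset $S$, let $\iota_S\colon\{1,\ldots,k\}\to S$ and $\iota_{\sigma(S)}\colon\{1,\ldots,k\}\to\sigma(S)$ be the increasing enumerations; then $\iota_{\sigma(S)}^{-1}\circ(\sigma|_S)\circ\iota_S$ is a permutation of $\{1,\ldots,k\}$ whose sign is exactly $\sgn{\sigma(\hat S)}$. With this description, well-definedness reduces to the identity
\begin{align*}
	\iota_{\sigma\tau(S)}^{-1}\circ(\sigma\tau)|_S\circ\iota_S = \bigl(\iota_{\sigma\tau(S)}^{-1}\circ\sigma|_{\tau(S)}\circ\iota_{\tau(S)}\bigr)\circ\bigl(\iota_{\tau(S)}^{-1}\circ\tau|_S\circ\iota_S\bigr),
\end{align*}
and, since $\mathsf{sgn}$ is multiplicative, this yields the cocycle relation $\sgn{(\sigma\tau)(\hat S)} = \sgn{\tau(\hat S)}\cdot\sgn{\sigma(\widehat{\tau(S)})}$, which is precisely what makes \eqref{eq:action} associative.

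For transitivity, take two quasi $k$-subsets $B_1^{\ast_1}$ and $B_2^{\ast_2}$. Since $\sym(n)$ is transitive on $\binom{[n]}{k}$, there is $\sigma\in\sym(n)$ with $\sigma(B_1)=B_2$, so $\sigma$ sends $B_1^{\ast_1}$ to $B_2^{\ast}$ for one of the two signs. If $\ast\neq\ast_2$, compose afterwards with a transposition $\tau$ of two elements of $B_2$: such a $\tau$ fixes $B_2$ setwise and is odd, so by the same reasoning as in the stabilizer computation for $A^+$ it interchanges $B_2^+$ and $B_2^-$, and $\tau\sigma$ then carries $B_1^{\ast_1}$ to $B_2^{\ast_2}$. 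Hence the action is transitive, and together with the stabilizer of $A^+$ being $\alt(k)\times\sym(n-k)$ the standard correspondence delivers the asserted permutation equivalence.

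The main obstacle is exactly the first step: all the content lies in verifying that the sign rule defines an honest action, i.e.\ that the sign assignment is multiplicative (a $1$-cocycle) under composition of permutations. The reinterpretation via the order-preserving enumerations $\iota_S$ makes this transparent, after which transitivity and the stabilizer identification are routine and the equivalence is immediate.
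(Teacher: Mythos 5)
Your proof is correct and follows exactly the route the paper intends: the paper states this lemma with proof omitted, having just computed $\stab{\sym(n)}{A^+} = \alt(k)\times\sym(n-k)$ and left the verification that \eqref{eq:action} is a group action to the reader, so that the standard correspondence between a transitive action and the action on cosets of a point stabilizer yields the equivalence. Your sign-cocycle identity via the increasing enumerations and the transposition argument for transitivity correctly supply precisely the details the paper leaves unverified.
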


\subsection{The quasi Johnson scheme}
In this subsection, we describe the association scheme obtained from the action of $\alt(k) \times \sym(n-k)$. We first characterize all orbitals of the action of $\sym(n)$ on quasi $k$-subsets of $[n]$. 

\begin{thm}
	An orbital of $\sym(n)$ acting on the quasi $k$-subsets is one of the following
	\begin{enumerate}[1)]
		\item $\mathcal{O}_0^+ := \left\{(A^+,A^+) :\ A\in \binom{[n]}{k} \right\}$,
		\item $\mathcal{O}_0^-:=\left\{(A^+,A^-):\ A\in \binom{[n]}{k}\right\}$,
		\item $\mathcal{O}_1^-:=\left\{ (A^+,B^-) \in \binom{[n]}{k}^\pm \times \binom{[n]}{k}^\pm :\ |A\cap B| = k-1 \right\}$,
		\item $\mathcal{O}_1^+:=\left\{ (A^+,B^+) \in \binom{[n]}{k}^\pm \times \binom{[n]}{k}^\pm :\ |A\cap B| = k-1 \right\}$, 
		\item or one of 
		\begin{align*}
			\mathcal{O}_i = \left\{ (A^+,B^+) \in \binom{[n]}{k}^\pm \times \binom{[n]}{k}^\pm :\ |A\cap B| = k-i \right\},
		\end{align*}
	for	$2\leq i\leq k$.
	\end{enumerate}\label{thm:orbitals}
\end{thm}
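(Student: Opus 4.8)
The plan is to pass from orbitals to suborbits. Since the action of $\sym(n)$ on the quasi $k$-subsets is permutation equivalent to its action on the cosets of $H=\alt(k)\times\sym(n-k)$, hence transitive, the bijection between orbitals and orbits of a point stabiliser reduces the problem to describing the orbits of $\stab{\sym(n)}{A^+}=\alt(k)\times\sym(n-k)$ on $\binom{[n]}{k}^{\pm}$, where $A=\{1,2,\ldots,k\}$. The coarse invariant of a quasi $k$-subset $C^*$ under this stabiliser is the intersection number $j:=|A\cap C|$. First I would record that $\stab{\sym(n)}{A^+}$ is transitive on the underlying $k$-subsets $C$ of each fixed intersection size: for $k\ge 3$ the factor $\alt(k)$ is transitive on the $j$-subsets of $A$, and $\sym(n-k)$ is transitive on the $(k-j)$-subsets of $[n]\setminus A$. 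Thus the only remaining question is how the two signs $C^+,C^-$ lying above a fixed set $C$ are distributed among the orbits.

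The heart of the argument is this sign analysis, organised according to $i:=k-j=|C\setminus A|$. If $i\ge 2$, then $C\setminus A$ contains two distinct elements, and the transposition swapping them lies in $\sym([n]\setminus A)\le\stab{\sym(n)}{A^+}$; by the definition of the action in \eqref{eq:action} it fixes $A^+$ while reversing the parity of $C$, so it interchanges $C^+$ and $C^-$. Hence for every $2\le i\le k$ the fibre collapses and the quasi $k$-subsets with $|A\cap C|=k-i$ form a single orbit, namely $\mathcal{O}_i$. If $i=1$, the opposite happens: any element of $\stab{\sym(n)}{A^+}$ fixing $C$ setwise must fix the unique point of $C\setminus A$, and its restriction to $A$ is an even permutation of $A$ preserving $C\cap A$ and hence fixing the unique point of $A\setminus C$, so it is even on $C\cap A$; consequently it acts on $C$ as an even permutation and fixes $C^+$ and $C^-$ separately. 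Therefore the fibre does not collapse and the sets with $|A\cap C|=k-1$ split into exactly two orbits. Finally, if $i=0$ then $C=A$ and both $A^+$ and $A^-$ are fixed points of $\stab{\sym(n)}{A^+}$, giving the two singleton orbits $\mathcal{O}_0^+$ (the identity relation) and $\mathcal{O}_0^-$.

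The hard part will be to describe the two orbits in the case $i=1$ correctly, since a direct computation shows they are \emph{not} separated by the absolute sign $\pm$: an element of $\stab{\sym(n)}{A^+}$ can send $C^+$ to $D^-$ for a different set $D$. What is invariant is the relative orientation of the ordered pair. Concretely, fixing an ordering of the common $(k-1)$-set $A\cap B$ and extending it to $A$ and to $B$ by appending the unique elements of $A\setminus B$ and $B\setminus A$, one compares the resulting orderings with the orientations recorded by $A^{\pm}$ and $B^{\pm}$; the product of the two signs is independent of the chosen ordering and is preserved by $\sym(n)$ because relabelling transforms both extensions in the same way (equivalently, this is the compatibility of the two boundary orientations induced on $A\cap B$). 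This relative orientation is the datum encoded by the superscripts of $\mathcal{O}_1^{+}$ and $\mathcal{O}_1^{-}$, and the parity computation above shows it genuinely distinguishes them, since $(A^+,B^+)$ and $(A^+,B^-)$ lie in different orbits. To conclude I would count: the analysis produces $2$ orbits with $j=k$, $2$ orbits with $j=k-1$, and one orbit $\mathcal{O}_i$ for each $2\le i\le k$, for a total of $2+2+(k-1)=k+3$, which matches the known rank of $(\sym(n),\alt(k)\times\sym(n-k))$ recorded in Table~\ref{table:intrans2}. As the listed relations are pairwise disjoint and exhaust $\binom{[n]}{k}^{\pm}\times\binom{[n]}{k}^{\pm}$ while numbering exactly the rank, each must be a single orbital, which finishes the proof.
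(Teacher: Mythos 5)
Your proposal is correct, but its logical structure is genuinely different from the paper's proof, and in fact stronger. The paper argues by \emph{representatives plus counting}: it exhibits the $k+3$ pairs $(X^+,Y)$ with $Y\in\{X^+,X^-,X_1^+,X_1^-,X_2^+,\ldots,X_k^+\}$, shows these lie in pairwise distinct suborbits of $\alt(k)\times\sym(n-k)$ (using the intersection-size invariant and the same parity computation you use for $i\in\{0,1\}$), and then deduces completeness of the list purely from the fact that the rank equals $k+3$, which is imported from the character decomposition of \cite{godsil2010multiplicity}. In particular, the paper never shows that all quasi $k$-subsets at distance $i\geq 2$ from $X$ form a single suborbit; that is only an indirect by-product of the count. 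You instead determine the suborbits directly: transitivity of $\alt(k)\times\sym(n-k)$ on underlying sets of fixed intersection size, sign collapse for $i\geq 2$ via a transposition inside $C\setminus A$, and fibre rigidity for $i\in\{0,1\}$. This makes the rank a consistency check rather than an essential ingredient, and it yields more information, e.g.\ that for $i\geq 2$ the orbital contains pairs with all sign combinations. Your observation about the $i=1$ case is also a genuine and correct sharpening: the sets $\mathcal{O}_1^{\pm}$ as literally written are \emph{not} invariant under the diagonal action (your example of a stabiliser element sending $C^+$ to $D^-$ is right; e.g.\ with $A=\{1,2,3\}$, the $3$-cycle $1\mapsto 3,\ 3\mapsto 2,\ 2\mapsto 1$ fixes $A^+$ and sends $\{1,2,4\}^+$ to $\{1,3,4\}^-$), so the correct invariant is the relative orientation you describe; the paper's closing sentence ``it is straightforward to verify that the elements of this set yields the five lists of orbitals'' glosses over exactly this point.

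Two minor caveats, neither of which is a gap in establishing the theorem as intended. First, your concluding claim that the listed relations ``are pairwise disjoint and exhaust'' $\binom{[n]}{k}^{\pm}\times\binom{[n]}{k}^{\pm}$ holds only under your corrected reading of the superscripts (relative orientation for $i=1$, all signs for $i\geq 2$, full diagonal for $\mathcal{O}_0^+$), not under the literal one --- which is consistent with your own earlier analysis but should be stated as such. Second, the $\sym(n)$-invariance of the relative orientation is asserted rather than proved; it does hold, because the action on signed sets is induced by the action on ordered tuples, which commutes with permuting positions, so a one-line verification would close this.
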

\begin{proof}
	By \cite{godsil2010multiplicity}, the permutation character of the group $\sym(n)$ acting on quasi $k$-subsets of $[n]$ is
	\begin{align*}
		\ind{\alt(k) \times \sym(n-k)}{\sym(n)} &= \sum_{i=0}^{k} \chi^{[n-i,i]} + \chi^{[n-k,1^{k}]} + \chi^{[n-k+1,1^{k-1}]}.
	\end{align*}
	Hence, the number of orbitals of the corresponding group action is $k+3.$ 
	
	Let $X = \{1,2,3,\ldots,k\}$. Consider some distinct elements $x_1<x_2<\ldots<x_{k}$ of the set $[n] \setminus \{1,2,\ldots,k\}$. For any $1\leq i\leq k$, we define $$X_i = \{1,2,\ldots,k-i,x_1,x_2,\ldots,x_{i}  \}.$$ 
	
	It is not hard to verify that $(X^+,X^+),\ (X^+,X_1^+),\ (X^+,X_1^-),$ $(X^+,X^-)$, and $(X^+,X_i^+)$, for $2\leq i\leq k$, are elements of the five types of orbitals listed in the statement of the theorem.
	To prove the conclusion of the theorem, it is enough to prove that $S = \{X^+,X_1^+,X_1^-,X^-\} \cup \{X_2^+,X_3^+,\ldots,X_k^+\}$ are in different suborbits of $\stab{\sym(n)}{X^+} = \alt(k) \times \sym(n-k)$. 
	
	It is clear that $X^+$ cannot be in the same orbit of $\stab{\sym(n)}{X^+}$ as any other element of $S$. We claim that for any $A^*,B^\dagger\in S$, where $*,\dagger \in \{+,-\}$, such that $|A\cap X| \neq |B\cap X|$, there exist no $\sigma \in \alt(k) \times \sym(n-k)$ such that $\sigma(A^*) = B^\dagger$. This is clear since assuming that $|A\cap X| < |B\cap X|$, there would be an element of $X\setminus A$ that is mapped to an element of $X$ by such permutation, which is not possible for any element $\sigma$ of $\alt(k) \times \sym(n-k)$.
	
	By this claim, it is enough to verify the cases where the intersections with $X$ have the same size, i.e., those with the same index.
	
	We claim that $X_1^+$ and $X_1^-$ are not in the same suborbit. To see this, assume that there exists $\sigma \in \alt(k) \times \sym(n-k)$ such that $\sigma(X_1^+) = X_1^-$. As $\sigma(X^+) = X^+$, we must have that $\sigma_{|X}$ is an even permutation and also that $\sigma(x_1) = x_1$ and $\sigma(k) = k$. Hence, $\sigma_{|\{1,2,\ldots,k-1\}} = \sigma_{|X_1}$ is an even permutation, which makes $\sigma(X_1^+) = X_1^-$ impossible. 
	
	Consequently, the set $\{ (X^+,A) :\ A\in S \}$ is a complete set of representatives of orbitals of $\sym(n)$ acting on the quasi $k$-subsets of $[n]$. It is straightforward to verify that the elements of this set yields the five lists of orbitals. This completes the proof.
\end{proof}

It is not hard to see that the orbitals are all self-paired. From Theorem~\ref{thm:orbitals}, we can explicitly define the Schurian association scheme corresponding to the action of $\sym(n)$ on the cosets of the multiplicity-free subgroup $\alt(k) \times \sym(n-k)$. The association scheme corresponding to this action is the \emph{quasi Johnson scheme}, denoted by $\mathcal{J}^{+\pm}(n,k)$, which is determined by the relations
\begin{align*}
	\left\{ \mathcal{O}_0^\pm,\mathcal{O}_1^\pm,\mathcal{O}_2,\mathcal{O}_3,\ldots,\mathcal{O}_k \right\}.
\end{align*}

\subsection{The graphs in $\mathcal{J}^{+\pm}(n,k)$}
For $2\leq i\leq k$, we define the graph $J^\pm(n,k,i)$ to be the graph on $\binom{[n]}{k}^\pm$ whose edge set is $\mathcal{O}_i$. We will call the graph $\mathcal{K}^\pm(n,k) := J^\pm(n,k,k)$ the \emph{quasi Kneser graph}. The graph determined by $\mathcal{O}_0^-$ is a perfect matching. We will denote the latter by $J^-(n,k,0)$. The graphs determined by $\mathcal{O}_1^+$ and $\mathcal{O}_1^-$ are called the \emph{quasi Johnson graph of type $+$} and \emph{quasi Johnson graph of type $-$}, respectively and we will denote them by $J^+(n,k,1)$ and $J^-(n,k,1)$, respectively.

In this subsection, we determine the structure of some  graphs in the quasi Johnson scheme $\mathcal{J}^{+\pm}(n,k)$ in terms of a graph product.
We prove the next proposition.
\begin{prop}
	For any $2\leq i\leq k$, we have $J^\pm(n,k,i) = K_2 \bowtie J(n,k,i)$.
\end{prop}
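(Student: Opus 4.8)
The plan is to prove the isomorphism by exhibiting an explicit vertex bijection and reducing the whole statement to a clean description of the orbital $\mathcal{O}_i$ when $i\ge 2$. Define $\phi\colon \binom{[n]}{k}^{\pm}\to \binom{[n]}{k}\times\{0,1\}$ by $\phi(A^+)=(A,0)$ and $\phi(A^-)=(A,1)$; this is a well-defined bijection of vertex sets, sending the two quasi-orientations of each $k$-set to the two copies of $J(n,k,i)$. Since the adjacency matrix of $K_2\bowtie J(n,k,i)$ is $J_2\otimes A$ with $A=A(J(n,k,i))$, its edge relation is $(A,a)\sim (B,b)$ if and only if $|A\cap B|=k-i$, independently of $a,b$ (and in particular no edge joins $(A,0)$ to $(A,1)$, as $|A\cap A|=k\ne k-i$). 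Thus $\phi$ will be a graph isomorphism exactly when $A^\ast\sim_{J^\pm(n,k,i)} B^\dagger$ holds, for every choice of signs $\ast,\dagger$, precisely when $|A\cap B|=k-i$. Everything therefore reduces to proving that for $2\le i\le k$ the orbital $\mathcal{O}_i$ equals the set of all pairs $(A^\ast,B^\dagger)$, over all four sign combinations, with $|A\cap B|=k-i$.

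The crux — and the place where the hypothesis $i\ge 2$ is essential — is showing that the two quasi-orientations can be flipped independently inside $\mathcal{O}_i$. Fix $A,B$ with $|A\cap B|=k-i$, so that $|A\setminus B|=|B\setminus A|=i\ge 2$. Since $|B\setminus A|\ge 2$, choose distinct $p,q\in B\setminus A$: the transposition $(p\,q)$ fixes $A$ pointwise, hence fixes $A^\ast$, while it is an odd permutation of the underlying set $B$ and therefore reverses the quasi-orientation of $B$, sending $B^\dagger$ to $B^{-\dagger}$. That an odd permutation of the underlying set flips the orientation while an even one preserves it is exactly the content of the stabilizer computation $\stab{\sym(n)}{A^+}=\alt(k)\times\sym(n-k)$ carried out above, read off from \eqref{eq:action}. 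Symmetrically, since $|A\setminus B|\ge 2$, a transposition of two elements of $A\setminus B$ fixes $B^\dagger$ and reverses the orientation of $A$. Applying these two transpositions in all combinations shows that $(A^\ast,B^\dagger)$, $(A^{-\ast},B^\dagger)$, $(A^\ast,B^{-\dagger})$ and $(A^{-\ast},B^{-\dagger})$ all lie in one orbital. For $i=0$ one of the blocks $A\setminus B$, $B\setminus A$ is empty and for $i=1$ both have size $1$, so neither flip is available; this is precisely why the orbitals split as $\mathcal{O}_0^\pm$ and $\mathcal{O}_1^\pm$ and why the proposition is restricted to $i\ge 2$.

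Finally I would assemble the description of $\mathcal{O}_i$. Any element of $\mathcal{O}_i$ has underlying intersection $k-i$, because the action of $\sym(n)$ on quasi $k$-subsets projects onto its action on $k$-subsets, which preserves intersection sizes; hence $\mathcal{O}_i$ is contained in the claimed set. Conversely, the claimed set is a single orbit: given two of its pairs, the (standard) transitivity of $\sym(n)$ on ordered pairs of $k$-subsets with a prescribed intersection size carries the underlying $k$-sets of the first onto those of the second, mapping the quasi-pair to one over the correct underlying sets, and the sign-flips of the previous paragraph then correct the two quasi-orientations. Therefore $\mathcal{O}_i=\{(A^\ast,B^\dagger):\ \ast,\dagger\in\{+,-\},\ |A\cap B|=k-i\}$, which by the first paragraph makes $\phi$ the desired isomorphism $J^\pm(n,k,i)\cong K_2\bowtie J(n,k,i)$. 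The only genuinely delicate point is the orientation bookkeeping for the sign-flip transpositions, which I would keep honest by appealing directly to \eqref{eq:action} and the stabilizer computation rather than re-deriving the signs by hand.
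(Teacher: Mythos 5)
Your proof is correct, but it reaches the key fact by a genuinely different route than the paper. The paper's own proof is a two-line deduction from Theorem~\ref{thm:orbitals}: since that theorem already identifies $\mathcal{O}_i$, for $i\geq 2$, as the set of all pairs whose underlying $k$-sets meet in $k-i$ points (in all four sign combinations), the subgraphs induced by $V^+=\{A^+:\ A\in\binom{[n]}{k}\}$ and $V^-=\{A^-:\ A\in\binom{[n]}{k}\}$ are each copies of $J(n,k,i)$, the cross edges are exactly those of $K_2\times J(n,k,i)$, and this is precisely the definition of $K_2\bowtie J(n,k,i)$. You instead re-derive the orbital description from scratch: your transposition argument (swap two elements of $B\setminus A$ to flip the orientation of $B$ while fixing $A^\ast$ pointwise, and symmetrically for $A\setminus B$) shows directly that the four sign combinations lie in a single orbit, and transitivity of $\sym(n)$ on ordered pairs of $k$-sets with prescribed intersection size completes the orbit description; the final assembly via the explicit bijection $\phi$ then matches the paper's conclusion. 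The two routes buy different things. The paper's Theorem~\ref{thm:orbitals} is proved by counting — the rank $k+3$ from the character decomposition plus $k+3$ representatives in distinct suborbits — so it never exhibits the group elements realizing the sign flips; your argument is constructive, independent of the character-theoretic rank computation, and makes transparent exactly why $i\geq 2$ is needed (for $i\leq 1$ the blocks $A\setminus B$ and $B\setminus A$ are too small to carry an odd permutation), a point the paper records only implicitly through the splitting of $\mathcal{O}_0$ and $\mathcal{O}_1$ into $\pm$ orbitals. The cost is redundancy within the paper's logical structure, where the orbital description is already available when the proposition is stated; within that structure your second and third paragraphs amount to an alternative, more elementary proof of the $i\geq 2$ part of Theorem~\ref{thm:orbitals}.
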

{
\begin{proof}
	Let $i\in \{2,3,\ldots,k\}$, $V^+ = \{ A^+ :\ A\in \binom{[n]}{k}  \}$ and $V^- = \{ A^- :\ A\in \binom{[n]}{k} \}$. Since $(A^+,B^+),(A^-,B^-) \in \mathcal{O}_i$ for any $A,B\in \binom{[n]}{k}$ such that $|A\cap B| = k-i$, it is clear that the subgraph induced by $V^+$ and $V^-$ are both isomorphic to $J(n,k,i)$. The edges of the form $(A^+,B^-)$ are exactly those in $K_2 \times J(n,k,i)$. 
	Hence, $J^\pm(n,k,i) = K_2 \bowtie J(n,k,i)$.
\end{proof}}

\begin{cor}
	Let $2\leq i\leq k$. The eigenvalues of $J^\pm(n,k,i)$ are equal to $0$ or of the form $2\lambda$, where $\lambda$ is an eigenvalue of $J(n,k,i)$.
\end{cor}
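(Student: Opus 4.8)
The plan is to read off the eigenvalues directly from the preceding proposition together with the adjacency-matrix formula for the $\bowtie$ product. By the proposition, for $2 \le i \le k$ we have $J^\pm(n,k,i) = K_2 \bowtie J(n,k,i)$, and the earlier \textbf{Proposition} on graph products tells us that the adjacency matrix of $X \bowtie K_2$ is $J_2 \otimes A$, where $A = A(J(n,k,i))$ is the adjacency matrix of the ordinary Johnson graph. Since $K_2 \bowtie J(n,k,i)$ is the same graph as $J(n,k,i) \bowtie K_2$ (the construction is symmetric: two disjoint copies of $J(n,k,i)$ with cross-edges given by $J(n,k,i) \times K_2$), the adjacency matrix of $J^\pm(n,k,i)$ is $J_2 \otimes A$.

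From here the computation is immediate. The eigenvalues of a Kronecker product $J_2 \otimes A$ are precisely the products $\mu \lambda$, where $\mu$ ranges over the eigenvalues of $J_2$ and $\lambda$ over those of $A$. The $2 \times 2$ all-ones matrix $J_2$ has eigenvalues $2$ and $0$. Hence every eigenvalue of $J^\pm(n,k,i)$ is either $2\lambda$ for some eigenvalue $\lambda$ of $J(n,k,i)$ (coming from the eigenvalue $2$ of $J_2$), or $0 \cdot \lambda = 0$ (coming from the eigenvalue $0$ of $J_2$). This is exactly the stated dichotomy.

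I would therefore write the corollary's proof as a two-sentence argument: first invoke the proposition to identify $A(J^\pm(n,k,i)) = J_2 \otimes A$, then apply the spectral rule for Kronecker products together with $\operatorname{Spec}(J_2) = \{2, 0\}$. The only point that requires a word of care is matching the orientation of the product, i.e.\ noting that $K_2 \bowtie J(n,k,i)$ and $J(n,k,i) \bowtie K_2$ yield the same graph so that the formula $J_2 \otimes A$ (stated for $X \bowtie K_2$) applies with $X = J(n,k,i)$; but since both $J_2 \otimes A$ and $A \otimes J_2$ have the same spectrum, even this is inessential for the eigenvalue statement.

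There is no real obstacle here. The substance of the result lives entirely in the preceding proposition (the structural identification $J^\pm(n,k,i) = K_2 \bowtie J(n,k,i)$) and in the already-established adjacency-matrix formula for the $\bowtie$ product; the corollary is a routine consequence, which is presumably why it is flagged as a corollary rather than a standalone theorem. The one thing I would double-check before committing the argument is that the eigenvalues of $J(n,k,i)$ themselves are not needed in any more detailed form — and indeed they are not, since the statement only asserts the shape $2\lambda$ (or $0$) without computing $\lambda$ explicitly.
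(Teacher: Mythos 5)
Your proposal is correct and follows essentially the same route as the paper: both identify the adjacency matrix of $J^\pm(n,k,i)$ as $J_2 \otimes A_{n,k,i}$ (the paper writes it explicitly as a $2\times 2$ block matrix with all blocks equal to $A_{n,k,i}$) and then conclude from the Kronecker-product spectral rule together with $\operatorname{Spec}(J_2)=\{0,2\}$. Your extra remark about the symmetry $K_2 \bowtie X = X \bowtie K_2$ is a harmless bit of care that the paper leaves implicit.
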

\begin{proof}
	Let $A_{n,k,i}$ and $A_{n,k,i}^\pm$ be the adjacency matrices of the graphs $J(n,k,i)$ and $J^\pm(n,k,i)$, respectively. Since $J^\pm (n,k,i) = K_2 \bowtie J(n,k,i)$, the adjacency matrix of $J^\pm (n,k,i)$ is 
	\begin{align*}
		A^\pm_{n,k,i} =
		\begin{bmatrix}
			A_{n,k,i} & A_{n,k,i}\\
			A_{n,k,i} & A_{n,k,i}
		\end{bmatrix}
		= 
		\begin{bmatrix}
			1 & 1 \\
			1 & 1
		\end{bmatrix} 
		\otimes 
		A_{n,k,i}.
	\end{align*}
	Since the spectrum of $2\times 2$ all-ones matrix is $\{0,2\}$, the result follows.
\end{proof}
\begin{cor}
	The eigenvalues of the quasi Kneser graph $\mathcal{K}^\pm(n,k)$ are $0$ or
	\begin{align*}
		2(-1)^j\binom{n-k-j}{k-j},
	\end{align*}
	for $0\leq j\leq k$.\label{cor:eigenvalues-quasi-kneser}
\end{cor}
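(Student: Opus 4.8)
The plan is to combine the preceding corollary with the classical spectrum of the Kneser graph. By definition $\mathcal{K}^\pm(n,k) = J^\pm(n,k,k)$, so the relevant Johnson-type graph is $J(n,k,k)$, in which two $k$-subsets $A,B$ are adjacent precisely when $|A\cap B| = k-k = 0$; that is, $J(n,k,k)$ is exactly the Kneser graph $K(n,k)$. Applying the preceding corollary with $i=k$ therefore reduces the problem entirely to the known eigenvalues of $K(n,k)$: every eigenvalue of $\mathcal{K}^\pm(n,k)$ is either $0$ or of the form $2\lambda$, where $\lambda$ ranges over the eigenvalues of $K(n,k)$.

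It then remains only to insert the eigenvalues of the Kneser graph. It is a classical fact (for instance, from the spectral decomposition of the Johnson scheme $\mathcal{J}(n,k)$ already discussed in Section~\ref{sect:line1}) that the eigenvalues of $K(n,k)$ are
\begin{align*}
	(-1)^j \binom{n-k-j}{k-j}, \qquad 0\leq j\leq k.
\end{align*}
Substituting each such $\lambda$ into the factor $2\lambda$ yields the claimed list $2(-1)^j\binom{n-k-j}{k-j}$ for $0\leq j\leq k$, while the additional eigenvalue $0$ arises from the null space of the singular $2\times 2$ all-ones matrix $J_2$, since the adjacency matrix of $J^\pm(n,k,k)$ is $J_2 \otimes A_{n,k,k}$ and $J_2$ has spectrum $\{0,2\}$.

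I do not anticipate any genuine obstacle here, as the only external ingredient is the spectrum of $K(n,k)$, which is standard and is in any case consistent with the least-eigenvalue computation recorded in Section~\ref{sect:line1} (that value corresponds to $j=1$, since $-\frac{\binom{n-k}{k}\binom{n-1}{k-1}}{\binom{n-1}{k}} = -\binom{n-k-1}{k-1}$). The only points worth stating carefully are that, for the statement as phrased, no multiplicities need be tracked since it merely lists the distinct possible eigenvalues, and that the value $0$ genuinely occurs precisely because $J_2$ is singular.
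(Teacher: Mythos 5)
Your proposal is correct and matches the paper's intended derivation: the corollary is stated there without proof precisely because it follows, as you argue, from the preceding corollary with $i=k$ (noting $J(n,k,k)=K(n,k)$) together with the classical Kneser spectrum $(-1)^j\binom{n-k-j}{k-j}$. Your added consistency check against the least eigenvalue $-\binom{n-k-1}{k-1}$ from Section~\ref{sect:line1} is accurate as well.
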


We could not determine the eigenvalues of the quasi Johnson graph $J^\pm(n,k,1)$, and we do not know whether these graphs have appeared in the literature already. 

\subsection{The $[n-1,1]$-module}

Let $K = \sym([n-1,1])$ and let $\Omega = \binom{[n]}{k}^\pm$. The group $K$ has two orbits on the quasi $k$-subsets of $[n]$. These two orbits of $K$ are
\begin{align*}
	\mathcal{S} =  \left\{ A^\pm :\ A\in \binom{[n]}{k} \mbox{ and } n\in A \right\}\mbox{ and } \Omega \setminus \mathcal{S}.
\end{align*}

\begin{prop}
	The family $\mathcal{S}$ is a maximum coclique of $\mathcal{K}^\pm (n,k)$.
\end{prop}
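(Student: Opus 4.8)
The plan is to show that $\mathcal{S}$ is a coclique of the quasi Kneser graph $\mathcal{K}^\pm(n,k)$ and that it meets the Ratio Bound (Theorem~\ref{thm:ratiobound}), so that no larger coclique can exist. First I would verify that $\mathcal{S}$ is independent: two quasi $k$-subsets $A^\pm, B^\pm$ are adjacent in $\mathcal{K}^\pm(n,k) = J^\pm(n,k,k)$ precisely when $|A\cap B| = 0$, but every element of $\mathcal{S}$ contains $n$, so any two of them share the point $n$ and hence cannot be disjoint. This immediately gives that $\mathcal{S}$ is a coclique. Note that $|\mathcal{S}| = 2\binom{n-1}{k-1}$, since each of the $\binom{n-1}{k-1}$ $k$-subsets containing $n$ yields both an even and an odd quasi $k$-subset.

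Next I would apply the Ratio Bound to $\mathcal{K}^\pm(n,k)$, whose order is $|\Omega| = 2\binom{n}{k}$ and whose valency is $k_k = \binom{n-k}{k}$ (the number of quasi $k$-subsets disjoint from a fixed one, which matches the valency of the Kneser graph $K(n,k)$ by the $K_2 \bowtie J(n,k,k)$ structure). By Corollary~\ref{cor:eigenvalues-quasi-kneser}, the eigenvalues of $\mathcal{K}^\pm(n,k)$ are $0$ together with $2(-1)^j\binom{n-k-j}{k-j}$ for $0\le j\le k$; the least eigenvalue is therefore $\tau = -2\binom{n-k-1}{k-1}$, attained at $j=1$, which is exactly twice the least eigenvalue of the Kneser graph. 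Substituting $|V|$, $k_k$, and $\tau$ into the Ratio Bound and simplifying the resulting binomial expression should yield the upper bound $\frac{|V|}{1 - k_k/\tau} = 2\binom{n-1}{k-1} = |\mathcal{S}|$, matching the size of $\mathcal{S}$ exactly. This simultaneously confirms independence is optimal and identifies $\mathcal{S}$ as a maximum coclique.

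The routine-but-essential computation is the binomial identity showing the Ratio Bound evaluates to $2\binom{n-1}{k-1}$; this is the same manipulation that underlies the classical Erd\H{o}s–Ko–Rado argument via the Kneser graph recalled in Section~\ref{sect:line1}, now scaled by the factor of $2$ coming from the $K_2\bowtie(-)$ construction, so the factor cancels in the ratio and the bound reduces to the classical one multiplied by $2$. I expect the main conceptual point — rather than obstacle — to be recognizing that passing from $J(n,k,k)$ to $K_2\bowtie J(n,k,k)$ doubles both the valency and the least eigenvalue, so the Hoffman ratio $k_k/\tau$ is unchanged while $|V|$ doubles, forcing the maximum coclique size to double as well. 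Since the classical EKR theorem guarantees that $S=\{A : n\in A\}$ meets the Ratio Bound for $K(n,k)$, its quasi analogue $\mathcal{S}$ inherits tightness automatically.

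\begin{proof}
	By Theorem~\ref{thm:orbitals}, two quasi $k$-subsets $A^\pm$ and $B^\pm$ are adjacent in $\mathcal{K}^\pm(n,k) = J^\pm(n,k,k)$ if and only if $|A\cap B| = 0$. Since every element of $\mathcal{S}$ is a quasi $k$-subset of a $k$-set containing $n$, any two $A^\pm, B^\pm \in \mathcal{S}$ satisfy $n\in A\cap B$, hence are non-adjacent. Therefore $\mathcal{S}$ is a coclique of $\mathcal{K}^\pm(n,k)$, and $|\mathcal{S}| = 2\binom{n-1}{k-1}$.

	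We now show this is best possible using Theorem~\ref{thm:ratiobound}. The graph $\mathcal{K}^\pm(n,k) = K_2 \bowtie J(n,k,k)$ has order $|\Omega| = 2\binom{n}{k}$ and valency $k_k = \binom{n-k}{k}$. By Corollary~\ref{cor:eigenvalues-quasi-kneser}, its least eigenvalue is $\tau = -2\binom{n-k-1}{k-1}$. The Ratio Bound gives
	\begin{align*}
		\alpha\left(\mathcal{K}^\pm(n,k)\right) \leq \frac{2\binom{n}{k}}{1 + \frac{\binom{n-k}{k}}{2\binom{n-k-1}{k-1}}} = 2\binom{n-1}{k-1},
	\end{align*}
	where the final equality follows from the same binomial simplification as in the classical Erd\H{o}s-Ko-Rado argument for the Kneser graph $K(n,k)$. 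Since $|\mathcal{S}| = 2\binom{n-1}{k-1}$ attains this bound, $\mathcal{S}$ is a maximum coclique of $\mathcal{K}^\pm(n,k)$.
\end{proof}
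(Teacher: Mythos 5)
Your strategy --- verify that $\mathcal{S}$ is a coclique directly, then show it meets the Ratio Bound --- is exactly the paper's, and the coclique half of your argument is fine. However, your written proof contains a concrete numerical error that breaks the key step: the valency of $\mathcal{K}^\pm(n,k)$ is not $\binom{n-k}{k}$ but $2\binom{n-k}{k}$. Since $\mathcal{K}^\pm(n,k) = K_2 \bowtie K(n,k)$ has adjacency matrix $J_2 \otimes A\bigl(K(n,k)\bigr)$, a fixed vertex $A^+$ is adjacent to \emph{both} $B^+$ and $B^-$ for every $k$-set $B$ disjoint from $A$, giving degree $2\binom{n-k}{k}$; this also matches the largest eigenvalue $2\binom{n-k}{k}$ in Corollary~\ref{cor:eigenvalues-quasi-kneser}, as it must for a regular graph. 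With your value of the valency, the Hoffman denominator is $1 + \binom{n-k}{k}\big/\bigl(2\tbinom{n-k-1}{k-1}\bigr) = 1 + \tfrac{n-k}{2k} = \tfrac{n+k}{2k}$, so your displayed inequality actually yields $\alpha\bigl(\mathcal{K}^\pm(n,k)\bigr) \le \tfrac{4k}{n+k}\binom{n}{k}$, which is strictly larger than $2\binom{n-1}{k-1} = \tfrac{2k}{n}\binom{n}{k}$ whenever $n > k$ (and here $n \ge 2k > k$). The final equality you assert is therefore false, and the bound you actually derive is too weak to certify that $\mathcal{S}$ is maximum.

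The slip is also internally inconsistent with your own preliminary discussion, where you correctly observe that passing from $K(n,k)$ to $K_2 \bowtie K(n,k)$ doubles \emph{both} the valency and the least eigenvalue, leaving the ratio unchanged; your formal proof then doubles only the eigenvalue. Carrying your own observation through fixes everything and recovers the paper's computation: with valency $2\binom{n-k}{k}$ and $\tau = -2\binom{n-k-1}{k-1}$ one gets $1 - \tfrac{k}{\tau} = 1 + \tfrac{n-k}{k} = \tfrac{n}{k}$, hence $\alpha\bigl(\mathcal{K}^\pm(n,k)\bigr) \le \tfrac{2\binom{n}{k}}{n/k} = 2\binom{n-1}{k-1} = |\mathcal{S}|$, which is precisely the paper's proof.
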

\begin{proof}
	Since $|A\cap B| = 1$, we conclude that $(A^\pm,B^\pm) \not \in \mathcal{O}_k$, for any $A,B\in \binom{[n]}{k}$. Therefore, $\mathcal{S}$ is a coclique of $\mathcal{K}^\pm(n,k)$.
	
	To prove that $\mathcal{S}$ is a maximum coclique, we use the Ratio Bound. By Corollary~\ref{cor:eigenvalues-quasi-kneser}, the largest and the least eigenvalues of $\mathcal{K}^\pm(n,k)$ are respectively
	\begin{align*}
		2\binom{n-k}{k} \mbox{ and } -2\binom{n-k-1}{k-1}.
	\end{align*}
	By the Ratio Bound, we have
	\begin{align*}
		\alpha(\mathcal{K}^\pm (n,k)) \leq \frac{2\binom{n}{k}}{1-\frac{2\binom{n-k}{k}}{-2\binom{n-k-1}{k-1}}} = \frac{2\binom{n}{k}}{1+\frac{n-k}{k}} = 2\binom{n-1}{k-1} = |\mathcal{S}|.
	\end{align*}
	
\end{proof}

By Theorem~\ref{thm:cocliques-module}, we deduce the following corollary.
\begin{cor}
	The eigenvalue $-2\binom{n-k-1}{k-1}$ is afforded by the $[n-1,1]$-module.
\end{cor}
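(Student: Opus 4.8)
The plan is to deduce the corollary as an immediate application of Theorem~\ref{thm:cocliques-module}, specialised to $H = \alt(k)\times\sym(n-k)$ and to the orbital graph $X_i$ taken to be the quasi Kneser graph $\mathcal{K}^\pm(n,k)=J^\pm(n,k,k)$, whose edge set is $\mathcal{O}_k$. To invoke that theorem I first check its two hypotheses. The requirement that $[n-1,1]$ be the second largest partition in dominance ordering of $\Lambda(n,H)$ is read off from the decomposition $\ind{\alt(k)\times\sym(n-k)}{\sym(n)}=\sum_{i=0}^k\chi^{[n-i,i]}+\chi^{[n-k,1^{k}]}+\chi^{[n-k+1,1^{k-1}]}$: the partition $[n]$ is the unique maximal one, and $[n-1,1]$ dominates every other constituent, namely the two-row partitions $[n-i,i]$ with $i\geq 2$ and the two hooks $[n-k,1^{k}]$, $[n-k+1,1^{k-1}]$ (the latter because $n-1\geq n-k+1$). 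The requirement that $\mathcal{S}$ be a coclique of $\mathcal{K}^\pm(n,k)$ is precisely the content of the preceding proposition.

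With the hypotheses verified, Theorem~\ref{thm:cocliques-module} tells me that the eigenvalue afforded by the $[n-1,1]$-module is exactly $-k_i|\mathcal{S}|/(|\Omega|-|\mathcal{S}|)$, so it only remains to evaluate this quantity. The three ingredients are $|\Omega|=2\binom{n}{k}$ (two quasi $k$-subsets for each $k$-subset), $|\mathcal{S}|=2\binom{n-1}{k-1}$ (those quasi $k$-subsets whose underlying set contains $n$), and the degree $k_i=2\binom{n-k}{k}$ of $\mathcal{K}^\pm(n,k)$. The last value follows from the structure $\mathcal{K}^\pm(n,k)=K_2\bowtie J(n,k,k)$, equivalently from the adjacency matrix $J_2\otimes A_{n,k,k}$: a vertex $A^{+}$ is joined to $B^{+}$ and to $B^{-}$ for each of the $\binom{n-k}{k}$ sets $B$ disjoint from $A$.

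Substituting these values and simplifying will produce the claimed eigenvalue. Using Pascal's identity $\binom{n}{k}-\binom{n-1}{k-1}=\binom{n-1}{k}$ for the denominator, and then the identities $\binom{n-1}{k-1}/\binom{n-1}{k}=k/(n-k)$ and $\binom{n-k}{k}\cdot\frac{k}{n-k}=\binom{n-k-1}{k-1}$, collapses the fraction to $-2\binom{n-k-1}{k-1}$. I would also note that this matches the least eigenvalue of $\mathcal{K}^\pm(n,k)$ recorded in Corollary~\ref{cor:eigenvalues-quasi-kneser} (its $j=1$ term), so the corollary simultaneously confirms that the Ratio Bound is met at the smallest eigenvalue. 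There is no real obstacle in this argument: it is a direct application of Theorem~\ref{thm:cocliques-module} followed by a short binomial simplification, and the only point needing care is the consistent bookkeeping of the factor $2$ appearing in $|\Omega|$, $|\mathcal{S}|$ and $k_i$, which cancels out of the final ratio.
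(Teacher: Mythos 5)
Your proof is correct and follows the paper's own route exactly: the paper likewise deduces this corollary by applying Theorem~\ref{thm:cocliques-module} to the coclique $\mathcal{S}$ of $\mathcal{K}^\pm(n,k)$ established in the preceding proposition, with the values $|\Omega|=2\binom{n}{k}$, $|\mathcal{S}|=2\binom{n-1}{k-1}$, $k_i=2\binom{n-k}{k}$ and the same binomial simplification (which the paper leaves implicit). Your verification of the dominance-ordering hypothesis and the bookkeeping of the factors of $2$ are accurate, so there is nothing to correct.
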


Therefore, the answer to Question~\ref{qst:main} is affirmative for association scheme corresponding to the Gelfand pair $(G,H)$, where $G = \sym(n)$ and $H = \alt(k) \times \sym(n-k)$.

In the next theorem, we find the maximum cocliques of $\mathcal{K}^\pm(n,k)$.
\begin{thm}
	The maximum cocliques of $\mathcal{K}^\pm (n,k)$ are of the form 
	\begin{align*}
		\left\{ A^\pm :\ A\in \binom{[n]}{k} \mbox{ and } x\in A \right\},
	\end{align*}
	for some $x \in [n]$.
\end{thm}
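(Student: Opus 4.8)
The plan is to show that every maximum coclique of the quasi Kneser graph $\mathcal{K}^\pm(n,k)$ is a canonical ``star'' determined by a single point $x\in[n]$. Since the quasi Kneser graph decomposes as $\mathcal{K}^\pm(n,k) = K_2 \bowtie K(n,k)$ with adjacency matrix $J_2 \otimes A$ (where $A$ is the adjacency matrix of the ordinary Kneser graph $K(n,k)$), the combinatorial structure is essentially inherited from $K(n,k)$. The key observation is the adjacency rule: $A^\pm$ and $B^\pm$ are adjacent whenever $A\cap B = \varnothing$, regardless of the signs. First I would exploit the projection map $\Omega = \binom{[n]}{k}^\pm \to \binom{[n]}{k}$ sending $A^\pm \mapsto A$. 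If $\mathcal{C}$ is a coclique in $\mathcal{K}^\pm(n,k)$, then its image $\overline{\mathcal{C}}$ under this projection is an intersecting family of $k$-subsets, i.e.\ a coclique in the ordinary Kneser graph $K(n,k)$, because disjointness of the underlying sets forces adjacency in $\mathcal{K}^\pm$ irrespective of signs.

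The crucial counting step is a fibre argument. Each fibre of the projection has size $2$ (namely $A^+$ and $A^-$), and within a single fibre the two vertices $A^+,A^-$ are \emph{not} adjacent (since $A\cap A = A \neq \varnothing$), so a coclique may freely contain both lifts of a given set. Since the Ratio Bound shows $\alpha(\mathcal{K}^\pm(n,k)) = 2\binom{n-1}{k-1} = 2\,\alpha(K(n,k))$, a coclique $\mathcal{C}$ of maximum size $2\binom{n-1}{k-1}$ must have a projection $\overline{\mathcal{C}}$ of size exactly $\binom{n-1}{k-1}$ with every fibre above $\overline{\mathcal{C}}$ fully included; otherwise $|\mathcal{C}| \le 2|\overline{\mathcal{C}}| < 2\binom{n-1}{k-1}$ unless $\overline{\mathcal{C}}$ already attains the Kneser bound and all fibres are complete. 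Thus $\overline{\mathcal{C}}$ is a \emph{maximum} intersecting family of $k$-subsets of $[n]$, and $\mathcal{C} = \{A^+, A^- : A \in \overline{\mathcal{C}}\}$.

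Now I would invoke the equality case of the classical EKR theorem (Theorem~\ref{thm:EKR} with $t=1$, valid since $n \ge 2k$): for $n > 2k$ the maximum intersecting families of $k$-subsets are exactly the stars $\{A : x\in A\}$ for some fixed $x \in [n]$. Combining this with the full-fibre conclusion yields precisely
\begin{align*}
	\mathcal{C} = \left\{ A^\pm :\ A\in \binom{[n]}{k} \mbox{ and } x\in A \right\},
\end{align*}
as claimed. The main obstacle I anticipate is handling the boundary case $n = 2k$, where the Kneser graph $K(2k,k)$ is a perfect matching and its maximum cocliques are \emph{not} all stars (any transversal choosing one set from each complementary pair attains the bound $\binom{2k-1}{k-1}$). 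One must check whether the theorem as stated is intended only for $n > 2k$, or whether the sign structure of the quasi Kneser graph rules out the non-canonical transversals; I would examine whether requiring \emph{both} signed lifts of each set to lie in a maximum coclique, together with the constraint $2k \le n$ appearing in Table~\ref{table:intrans2}, forces $n > 2k$ or otherwise eliminates the degenerate transversals. The remaining steps — the projection is well defined, fibres have size $2$, and within-fibre non-adjacency — are routine verifications from the adjacency rule and require no delicate computation.
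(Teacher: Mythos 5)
Your proposal is correct for $n > 2k$ and is close in spirit to the paper's proof, though the decomposition is different. The paper splits a maximum coclique $\mathcal{S}$ by sign into $\mathcal{S}_+$ and $\mathcal{S}_-$; since these are cocliques in the two induced copies of $K(n,k)$ inside $K_2 \bowtie K(n,k)$ and $|\mathcal{S}_+|+|\mathcal{S}_-| = 2\binom{n-1}{k-1}$, each must be a maximum coclique of $K(n,k)$, hence by the uniqueness part of the EKR theorem a star with some centre $a$ (for $\mathcal{S}_+$) and $b$ (for $\mathcal{S}_-$), and the cross-edges of the $\bowtie$ product then force $a=b$. You instead project onto the underlying $k$-sets, observe that the projection $\overline{\mathcal{C}}$ is intersecting because adjacency in $\mathcal{K}^\pm(n,k)$ ignores signs, and use the fibre bound $|\mathcal{C}|\leq 2|\overline{\mathcal{C}}|$ together with $\alpha(\mathcal{K}^\pm(n,k)) = 2\binom{n-1}{k-1}$ to conclude that $\overline{\mathcal{C}}$ is a maximum intersecting family with every fibre fully included. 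Both arguments consume exactly the same inputs (the Ratio Bound value, the Kneser bound, and EKR uniqueness); yours is marginally cleaner in that it invokes uniqueness once and needs no centre-matching step, while the paper's makes the two-copies structure more visible.

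Your worry about the boundary case $n = 2k$ is justified, and it in fact exposes a gap in the paper's own proof, which invokes the star characterization of maximum cocliques of $K(n,k)$ without restriction even though line~2 of Table~\ref{table:intrans2} allows $2k \leq n$. When $n = 2k$, the graph $\mathcal{K}^\pm(2k,k)$ is a disjoint union of copies of $K_{2,2}$ on the vertex sets $\left\{A^+,A^-,\underline{A}^+,\underline{A}^-\right\}$, so any transversal $T$ choosing one set from each complementary pair $\left\{A,\underline{A}\right\}$ yields a maximum coclique $\left\{A^\pm : A\in T\right\}$ of size $\binom{2k}{k} = 2\binom{2k-1}{k-1}$, and such cocliques are generally not stars. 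The sign structure does not rule these out, so the theorem as stated genuinely requires $n > 2k$; neither your argument nor the paper's can rescue the case $n = 2k$, and the correct resolution is to add that hypothesis rather than to look for a stronger proof.
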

\begin{proof}
	 Let $\mathcal{S}$ be a maximum coclique of $\mathcal{K}^\pm (n,k)$. 
	 Let 
	 \begin{align*}
	 	\mbox{$\mathcal{S}_+ = \left\{ A^+ \in \mathcal{S}:\ A\in \binom{[n]}{k} \right\}$ and $\mathcal{S}_- = \left\{ A^- \in \mathcal{S}:\ A\in \binom{[n]}{k} \right\}$.}
	 \end{align*}
 	 Note that $|\mathcal{S}| = 2\binom{n-1}{k-1}$. Moreover, since $K^\pm (n,k) = K_2\bowtie K(n,k)$ and the maximum cocliques of $K(n,k)$ have size $\binom{n-1}{k-1}$, we have $|\mathcal{S}_+| = |\mathcal{S}_-|  = \binom{n-1}{k-1}$. That is, $\mathcal{S}_-$ and $\mathcal{S}_+$ are maximum cocliques in the two copies of $K(n,k)$ in $K^\pm(n,k)$. Therefore, there exists $a,b\in [n]$ such that
 	 \begin{align*}
 	 		\mbox{$\mathcal{S}_+ = \left\{ A^+ \in \mathcal{S}:\ A\in \binom{[n]}{k} \mbox{ and } a\in A \right\}$ and $\mathcal{S}_- = \left\{ A^- \in \mathcal{S}:\ A\in \binom{[n]}{k}  \mbox{ and } b\in A\right\}$.}
 	 \end{align*}
  	 It is clear that $\mathcal{S} = \mathcal{S}_- \cup \mathcal{S}_+$ is a coclique of $K^\pm(n,k)$ if and only if $a=b$. This completes the proof.
\end{proof}

\section{The orbital scheme from line~4}\label{sect:line4}

Consider the Gelfand pair $(G,H) = (\sym(n),\alt(k) \times \alt(n-k))$. First, we will determine certain combinatorial objects that correspond to $G/H$. Then, we will prove the main result.

By \cite{godsil2010multiplicity}, we first recall that
\begin{align*}
	\ind{H}{G} = \sum_{i=0}^k\left( \chi^{[n-i,i]} + \chi^{[2i,1^{n-2i}]} \right) + \chi^{[n-k,1^k]}+ \chi^{[k,1^{n-k}]} + \chi^{[n-k+1,1^{k-1}]} + \chi^{[k+1,1^{n-k-1}]}.
\end{align*}
Hence, $[n-1,1]$ is the second largest in dominance ordering in $\Lambda(n,H)$.
\subsection{Combinatorial objects}

For any $k$-subset $A$, let $\underline{A}$ be the complement of $A$ in $[n]$. Consider the set
\begin{align*}
	\Omega_{n,k}=\left\{ (A^\pm,\underline{A}^\pm) : \ A\in \binom{[n]}{k} \right\}.
\end{align*}
Note that $|\Omega| = 4\binom{n}{k}$.
We claim that the action of $G$ on $G/H$ is permutation equivalent to a certain action of $\sym(n)$ on $\Omega_{n,k}$. For any $\sigma\in \sym(n)$ and $(A^{*_1},\underline{A}^{*_2}) \in \Omega_{n,k}$, define 
\begin{align}
	\sigma\left( (A^{*_1},\underline{A}^{*_2})\right) := \left( \sigma(A^{*_1}),\sigma(\underline{A}^{*_2}) \right).
\end{align}
Note that this is an induced action from the action given in \eqref{eq:action}. This gives an action of $\sym(n)$ on $\Omega_{n,k}$. Let us now prove that the stabilizer of an element of $\Omega_{n,k}$ is conjugate to $H$. Consider the element $(A^+,\underline{A}^+) \in \Omega_{n,k}$, where $A = \{1,2,\ldots,k\}$. If $\sigma \in \sym(n)$ fixes $(A^+,\underline{A}^+)$, then we have
\begin{align*}
	\sigma(A^+)= A^+ \mbox{ and } \sigma(\underline{A}^+) = \underline{A}^+.
\end{align*}
In other words, $\sigma_{|A}$ and $\sigma_{|\underline{A}}$ must be even permutations. Consequently, the stabilizer of $(A^+,\underline{A}^+)$ is equal to $H$. One can also prove that $H$ is also the stabilizer of $(A^-,\underline{A}^-),\ (A^+,\underline{A}^-),\ \mbox{and } (A^-,\underline{A}^+)$.

We omit the proof of the following proposition since it straightforward.
\begin{prop}
	The action of $G$ on $G/H$ is permutation isomorphic to the action of $G$ on $\Omega_{n,k}$.
\end{prop}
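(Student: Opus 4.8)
The plan is to deduce this from the general correspondence recalled in Section~\ref{sect:background}: a transitive action of $G$ on a set $\Omega$ with point stabilizer $H_0$ is permutation isomorphic to the action of $G$ on $G/H_0$. The preceding paragraph has already shown that, for $A = \{1,2,\ldots,k\}$, the stabilizer $\stab{\sym(n)}{(A^+,\underline{A}^+)}$ equals $H = \alt(k)\times\alt(n-k)$, so the only remaining point to verify is that $G$ acts transitively on $\Omega_{n,k}$.

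First I would establish transitivity by counting. With the stabilizer of $(A^+,\underline{A}^+)$ identified as $H$, of order $\tfrac{k!}{2}\cdot\tfrac{(n-k)!}{2}$, the orbit-stabilizer theorem gives
\begin{align*}
	\frac{|G|}{|H|} = \frac{n!}{\tfrac{k!}{2}\cdot\tfrac{(n-k)!}{2}} = 4\binom{n}{k} = |\Omega_{n,k}|.
\end{align*}
Since this orbit is a subset of $\Omega_{n,k}$ of full size, it must be all of $\Omega_{n,k}$, so the action is transitive; combined with the stabilizer computation, the desired permutation isomorphism follows immediately.

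If an explicit transitivity argument is wanted instead, I would proceed as follows. Given two elements $(A^{*_1},\underline{A}^{*_2})$ and $(B^{\dagger_1},\underline{B}^{\dagger_2})$, first pick $\tau\in\sym(n)$ with $\tau(A)=B$ as sets, so that $\tau(\underline{A})=\underline{B}$ as well; this aligns the underlying $k$-subsets and leaves only the signs to match. Because $B$ and $\underline{B}$ each contain at least two elements, one may then use a transposition supported inside $B$, which toggles the sign attached to $B$ while fixing $\underline{B}$ pointwise (hence its sign), and symmetrically a transposition inside $\underline{B}$; composing with at most one such transposition in each block adjusts the two remaining signs independently to their target values.

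The stabilizer computation being already in place, the only genuinely new ingredient is transitivity, and the counting route makes it instantaneous. The main thing I would take care over in the explicit version is the sign bookkeeping of the action~\eqref{eq:action} — confirming that a within-block transposition flips exactly one of the two signs — but the counting argument avoids this altogether, so I would lead with it.
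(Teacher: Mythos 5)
Your proof is correct. The paper in fact omits this proof entirely (``We omit the proof of the following proposition since it straightforward''), and what you supply is precisely the intended argument: the stabilizer computation $\stab{\sym(n)}{(A^+,\underline{A}^+)} = \alt(k)\times\alt(n-k)$ is already established in the preceding paragraph, and transitivity follows immediately from orbit--stabilizer counting, since $|G|/|H| = 4\binom{n}{k} = |\Omega_{n,k}|$ forces the orbit of $(A^+,\underline{A}^+)$ to be all of $\Omega_{n,k}$; your explicit alternative (a transposition inside a block toggles that block's sign and fixes the other block's sign) is also sound under the paper's standing hypotheses $k\geq 3$, $2k\leq n-2$.
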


\subsection{The orbital scheme $\mathcal{J}^{++}(n,k)$}
In this subsection, we determine the graphs in the orbital scheme corresponding to $(G,H)$.
\begin{thm}
	An orbital of $\sym(n)$ in its action on $\Omega_{n,k}$ is one of the following
	\begin{enumerate}[1)]
		\item $\mathcal{O}_0^{+,+} := \left\{\left((A^+,\underline{A}^+),(A^+,\underline{A}^+)\right):\ A\in  \binom{[n]}{k} \right\}$
		\item $\mathcal{O}_0^{-,-}:= \left\{ \left((A^+,\underline{A}^+),(A^-,\underline{A}^-)\right) :\ A\in \binom{[n]}{k} \right\}$
		\item $\mathcal{O}_0^{+,-} := \left\{\left((A^+,\underline{A}^+),(A^+,\underline{A}^-)\right):\ A\in  \binom{[n]}{k} \right\}$
		\item $\mathcal{O}_0^{-,+}:= \left\{ \left((A^+,\underline{A}^+),(A^-,\underline{A}^+)\right) :\ A\in \binom{[n]}{k} \right\}$
		\item $\mathcal{O}_1^{+,+} := \left\{\left((A^+,\underline{A}^+),(B^+,\underline{B}^+)\right):\ A,B\in  \binom{[n]}{k} \mbox{ and } |A\cap B| = k-1 \right\}$
		\item $\mathcal{O}_1^{-,-}:= \left\{ \left((A^+,\underline{A}^+),(B^-,\underline{B}^-)\right) :\ A,B\in \binom{[n]}{k} \mbox{ and } |A \cap B| = k-1 \right\}$
		\item $\mathcal{O}_1^{+,-} := \left\{\left((A^+,\underline{A}^+),(B^+,\underline{B}^-)\right):\ A,B\in  \binom{[n]}{k} \mbox{ and } |A\cap B| = k-1 \right\}$
		\item $\mathcal{O}_1^{-,+}:= \left\{ \left((A^+,\underline{A}^+),(B^-,\underline{B}^+)\right) :\ A,B\in \binom{[n]}{k} \mbox{ and } |A\cap B| = k-1 \right\}$
		\item  for $2\leq i\leq  k$ 
		\begin{align*}
			\mathcal{O}_i^+ := \left\{ \left((A^+,\underline{A}^+),(B^+,\underline{B}^+)\right),\left((A^+,\underline{A}^+),(B^-,\underline{B}^-)\right) : A,B\in \binom{[n]}{k} \mbox{ and } |A\cap B| = k-i \right\},
		\end{align*}
		\item for $2\leq i \leq k$, 
		\begin{align*}
			\mathcal{O}^-_i := \left\{ \left((A^+,\underline{A}^+),(B^+,\underline{B}^-)\right),
			 \left((A^+,\underline{A}^+),(B^-,\underline{B}^+)\right): A,B\in \binom{[n]}{k} \mbox{ and } |A\cap B| = k-i \right\}.
		\end{align*}
	\end{enumerate}
\end{thm}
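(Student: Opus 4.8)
The plan is to count the orbitals via the permutation character and then to match that count by determining the orbits of the point stabiliser. Since $(G,H)$ is a Gelfand pair, the rank equals the number of irreducible constituents of $\ind{H}{G}$, and by the decomposition of $\ind{H}{G}$ recalled above this number is $2k+6$; this is the target. I would fix the base point $\omega_0=(A_0^+,\underline{A_0}^+)$ with $A_0=\{1,\dots,k\}$, so that $\stab{\sym(n)}{\omega_0}=H=\alt(A_0)\times\alt(\underline{A_0})$, and invoke the bijection between orbitals of $G$ on $\Omega_{n,k}$ and suborbits (i.e.\ $H$-orbits) on $\Omega_{n,k}$ recalled in Section~\ref{sect:background}. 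It then suffices to show that $H$ has exactly $2k+6$ orbits on $\Omega_{n,k}$ and that they are realised by the listed representatives.

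To organise the bookkeeping I would record the action in coordinates: identify an element with a triple $(B,\ast_1,\ast_2)$, where $B\in\binom{[n]}{k}$ and $\ast_1,\ast_2\in\{+,-\}$ are the signs carried by $B$ and $\underline{B}$, so that \eqref{eq:action} reads
\[
\sigma\cdot(B,\ast_1,\ast_2)=\bigl(\sigma(B),\ \varepsilon_B(\sigma)\,\ast_1,\ \varepsilon_{\underline B}(\sigma)\,\ast_2\bigr),
\]
where $\varepsilon_B(\sigma)\in\{+,-\}$ is the sort-sign attached to $\sigma$ and $B$ in \eqref{eq:action}. The quantity $i:=k-|A_0\cap B|$ is $H$-invariant, and since $k\ge 3$ and $n-k\ge k+2$ the alternating groups $\alt(A_0)$ and $\alt(\underline{A_0})$ are transitive on subsets of each fixed size, whence $H$ is transitive on the $k$-subsets $B$ with a given $i$. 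Therefore the $H$-orbits with a fixed $i$ are in bijection with the orbits of $\stab{H}{B_i}$ on the four sign-pairs in $\{+,-\}^2$, for a single representative $B_i$.

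The heart of the argument is the analysis of these sign-pair orbits. For $\sigma\in\stab{H}{B_i}$, which preserves $B_i$ and hence $\underline{B_i}$, one has $\varepsilon_{B_i}(\sigma)=\sgn{\sigma|_{B_i}}$ and $\varepsilon_{\underline{B_i}}(\sigma)=\sgn{\sigma|_{\underline{B_i}}}$. Writing $\sigma=(\alpha,\beta)$ with $\alpha\in\alt(A_0)$ and $\beta\in\alt(\underline{A_0})$, both signs factor through the parities of $\alpha$ and $\beta$ on the four blocks $B_i\cap A_0$, $\underline{B_i}\cap A_0$, $B_i\cap\underline{A_0}$, $\underline{B_i}\cap\underline{A_0}$, of sizes $k-i$, $i$, $i$, $n-k-i$. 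Because $\alpha$ and $\beta$ are even, $\varepsilon_{B_i}(\sigma)\varepsilon_{\underline{B_i}}(\sigma)=\sgn{\alpha}\sgn{\beta}=+$ for every $\sigma$, so $\ast_1\ast_2$ is always an orbit invariant, separating the ``$+$'' from the ``$-$'' sign-pairs. Whether $\ast_1\ast_2$ is the \emph{only} invariant is decided by block sizes: flipping a sign requires an odd permutation on a block of size at least $2$, paired with a second one (to keep $\alpha$, resp.\ $\beta$, even). For $i\in\{0,1\}$ the blocks are too small, so no sign can be altered and all four pairs lie in distinct orbits, yielding the four orbitals $\mathcal{O}_0^{\ast,\dagger}$ and the four orbitals $\mathcal{O}_1^{\ast,\dagger}$. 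For $2\le i\le k$ one has $i\ge 2$ and $n-k-i\ge n-2k\ge 2$, so an odd permutation of $B_i\cap\underline{A_0}$ together with one of $\underline{B_i}\cap\underline{A_0}$ flips both signs simultaneously; hence $\ast_1\ast_2$ is the only invariant, giving exactly $\mathcal{O}_i^{+}$ and $\mathcal{O}_i^{-}$.

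Summing gives $4+4+2(k-1)=2k+6$ distinct $H$-orbits, matching the rank; as the listed relations are pairwise distinct and exhaust this count, they are precisely the orbitals. The delicate step, which I expect to demand the most care, is the block-size bookkeeping of the previous paragraph — in particular the observation that for $i=1$ the two singleton blocks $\underline{B_1}\cap A_0$ and $B_1\cap\underline{A_0}$ force \emph{all four} sign-pairs into separate orbits, so that $i=1$ behaves like $i=0$ rather than like $i\ge 2$, which is exactly what produces the extra four $\mathcal{O}_1$-orbitals.
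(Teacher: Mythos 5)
Your proposal is correct, and it reaches the theorem by a route that is organized genuinely differently from the paper's, even though both share the same skeleton (the orbital--suborbit bijection, the stabilizer $H=\alt(k)\times\alt(n-k)$ of the base point, the intersection size $i$ as a first invariant, and parities of restricted permutations as the discriminating tool). The paper's proof fixes explicit representatives $(X_i^{\pm},\underline{X}_i^{\pm})$, proves only that its $2k+6$ listed elements lie in \emph{pairwise distinct} suborbits, and then invokes the rank $2k+6$, imported from the character decomposition of \cite{godsil2010multiplicity}, to conclude exhaustiveness; in particular the fusion needed for items 9) and 10) --- that $(B^+,\underline{B}^+)$ and $(B^-,\underline{B}^-)$ lie in the \emph{same} suborbit when $i\geq 2$ --- is never witnessed by an explicit group element, but is deduced indirectly from the count together with the separation arguments. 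Your proof instead stratifies $\Omega_{n,k}$ by $i$, uses transitivity of $H$ on each stratum of $k$-subsets (valid precisely because $k\geq 3$ and $n-k\geq k+2$, the hypotheses of line~4 of Table~\ref{table:intrans2}) to reduce to the orbits of the set stabilizer $\stab{H}{B_i}$ on the four sign pairs, and then settles that small action completely: the product of the two signs is invariant since $\alpha$ and $\beta$ are even, and your double-flip construction (an odd permutation on $B_i\cap\underline{A_0}$ paired with one on $\underline{B_i}\cap\underline{A_0}$, using $i\geq 2$ and $n-k-i\geq n-2k\geq 2$) shows this is the \emph{only} invariant for $i\geq 2$, while for $i\leq 1$ the singleton blocks force every restriction to be even, keeping all four pairs separate. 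This determines every suborbit --- separation \emph{and} fusion --- so for you the rank is a consistency check rather than a logical input. What each approach buys: the paper's is shorter given that the multiplicity-free decomposition is already on the table; yours is self-contained on the group-theoretic side, makes the $i=1$ versus $i\geq 2$ dichotomy transparent, and exhibits explicitly the fusion pattern of sign pairs that constitutes the actual content of items 9) and 10).
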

\begin{proof}
	Let $x_1<x_2<\ldots<x_k$ be distinct elements of $[n] \setminus [k]$. Define
	\begin{align*}
		X_i = \left\{ 1,2,\ldots,k-i,x_1,x_2,\ldots,x_i \right\}, \ \mbox{ for } 0\leq i\leq k.
	\end{align*}
	Note that $X_0 = \{1,2\ldots,k\}$ and $X_k = \{x_1,x_2,\ldots,x_k\}$. It is enough to prove that the set 
	\begin{align*}
		S &=\left\{ (X_i^\pm,\underline{X}_i^\pm): 0\leq i \leq 1\right\} \cup \left\{ (X_i^+,\underline{X}_i^+) :\ 2\leq i\leq k \right\} \cup \left\{ (X_i^+,\underline{X}_i^-):\ 2 \leq i \leq k \right\},
	\end{align*}
	consists  of elements from distinct  suborbits of $(X_0^+,\underline{X}_0^+)$. 
	Recall that the stabilizer of $(X_0^+,\underline{X}_0^+)$ is $\alt(k) \times \alt(n-k)$. Similar to what we saw in the previous section, it is clear that an element of $\alt(k) \times \alt(n-k)$ cannot map $(X_i^\pm,\underline{X}_i^\pm)$ to $(X_j^\pm,\underline{X}_j^\pm)$, unless $i = j$. 
	
	Since $\alt(k) \times \alt(n-k)$ stabilizes $(X^+_0,\underline{X}_0^+)$ and $\left(X_0^-,\underline{X}_0^-\right)$, these two cannot be in the same suborbit. If $\sigma \in \sym(n)$ such that $\sigma (X_0^+,\underline{X}_0^-) = (X_0^-,\underline{X}_0^+)$, then it is straightforward that $\sigma \not\in\alt(k) \times \alt(n-k)$. Hence, the four elements $(X_0^\pm,\underline{X}_0^\pm)$ are in different suborbits.
	
	Assume now that there exists $\sigma \in \alt(k) \times \alt(n-k)$ such that $(X_1^+,\underline{X}_1^+)$  is mapped to $(X_1^- ,\underline{X}_1^-)$ by $\sigma$. It is necessary that the set $X_1$ is fixed by $\sigma$ setwise. It is clear that $\sigma(k) = k$ and $\sigma(x_1) = x_1$ since $\sigma$ also fixes $X_0 = \{1,2,\ldots,k\}$ and $\underline{X}_0$. We conclude that
	\begin{align*}
		\sigma_{|X_1} = \sigma_{|X_0 \setminus \{ k \}} \mbox{ is an even permutation}.
	\end{align*}
	Consequently, $\sigma$ cannot map $X_0^+$ to $X_0^-$. The other cases can be proved in the same way. Consequently, the elements $(X_1^\pm,\underline{X}_1^\pm)$ are in different suborbits of $\alt(k) \times \sym(n-k)$.
	
	Finally, we assume that $2\leq i\leq k$. We prove that $(X_i^+,\underline{X}^+_i)$ and $(X_i^+,\underline{X}_i^-)$ are not in the same suborbits by contradiction. Assume that $\sigma \in \alt(k) \times \alt(n-k)$ maps $(X_i^+,\underline{X}_i^+)$ to $(X_i^+,\underline{X}_i^-)$. Again, we note that $\sigma$ fixes $X_i$ and $\underline{X}_i$ setwise. Combining this with the fact that $\sigma$ permutes the elements of $X_0$ and $\sigma_{|X_i}$ is an even permutation, we deduce that
	\begin{align*}
		\sigma_{|\{1,2\ldots,k-i\}} \mbox{ and } \sigma_{|\{x_1,x_2,\ldots,x_i\}}
	\end{align*}
	must be both even or both odd permutations. Without loss of generality, assume that they are both even permutations. Noting that $\sigma_{|\underline{X}_i}$ is an odd permutation (it maps $\underline{X}_i^+$ to $\underline{X}_i^-$), by a similar argument as above, we deduce that one of
	\begin{align*}
		\sigma_{|\{k-i+1,\ldots,k\}} \mbox{ and } \sigma_{|\underline{X}_0 \setminus \{x_1,x_2,\ldots,x_i\} }
	\end{align*}
	must be even and the other one is an odd permutation. As $\sigma_{|\{1,2\ldots,k-i\}}$ is an even permutation and $\sigma(X_0^+) = X^+_0$, we conclude that $\sigma_{|\{k-i+1,\ldots,k\}}$ must be an even permutation and $\sigma_{|\underline{X}_0 \setminus \{x_1,x_2,\ldots,x_i\} }$ is an odd permutation. In summary, $\sigma_{|\{x_1,x_2,\ldots,x_i\}}$ is an even permutation and $\sigma_{|\underline{X_{0}}\setminus \{x_1,x_2,\ldots,x_i\}}$ is an odd permutation. This implies that $\sigma_{|\underline{X}_0}$ is an odd permutation. The latter is impossible. Therefore, the two elements that we started with cannot be in the same orbital.
	
	Since the corresponding action of $G$ has rank $2k+6$, the set $S$ consists of an element from each suborbits. It is easy to check that each element of $S$ is a representative of the ten sets in the statement of the theorem. The rest of the proof follows by making the symmetric group $\sym(n)$ act on the pairs $\left((X_0^+,\underline{X}_0^+),T\right)$, where $T \in S$.
\end{proof}

We will denote the association scheme given by the orbitals in the previous theorem by $\mathcal{J}^{++}(n,k)$. It is not hard to check that all orbitals listed above are self-paired. Let $\mathcal{K}(n,k)$ be the graph corresponding to the orbital $\mathcal{O}^{+}_k$.

\begin{prop}
	The graph $\mathcal{K}(n,k) $ is isomorphic to two disjoint copies of $ K_2\bowtie K(n,k)$.
\end{prop}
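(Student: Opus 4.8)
The plan is to decompose the vertex set $\Omega_{n,k}$ into two halves, to check that $\mathcal{O}_k^+$ has no edge running between them, and then to identify the graph induced on each half with $K_2\bowtie K(n,k)$; the statement follows immediately by assembling the two pieces.

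First I would split $\Omega_{n,k}$ according to the product of the two signs carried by a vertex. Put
\begin{align*}
\Omega^{+}&=\left\{(A^{+},\underline{A}^{+}),\,(A^{-},\underline{A}^{-}) :\ A\in\binom{[n]}{k}\right\},\\
\Omega^{-}&=\left\{(A^{+},\underline{A}^{-}),\,(A^{-},\underline{A}^{+}) :\ A\in\binom{[n]}{k}\right\},
\end{align*}
so that $|\Omega^{+}|=|\Omega^{-}|=2\binom{n}{k}$. Reading off the description of $\mathcal{O}_k^+$ from the preceding theorem, a vertex $(A^{+},\underline{A}^{+})$ is joined only to vertices $(B^{+},\underline{B}^{+})$ and $(B^{-},\underline{B}^{-})$ with $A\cap B=\varnothing$, and symmetrically for the remaining sign patterns; in every case the two endpoints carry the same sign-product. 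Hence no edge of $\mathcal{O}_k^+$ runs between $\Omega^{+}$ and $\Omega^{-}$, and $\mathcal{K}(n,k)$ is the disjoint union of the subgraphs induced on $\Omega^{+}$ and on $\Omega^{-}$.

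Next I would identify the subgraph induced on $\Omega^{+}$ with $K_2\bowtie K(n,k)$. Write $\Omega^{+}=C_0\sqcup C_1$ with $C_0=\{(A^{+},\underline{A}^{+})\}$ and $C_1=\{(A^{-},\underline{A}^{-})\}$, and observe that $A\mapsto(A^{\pm},\underline{A}^{\pm})$ identifies each $C_i$ with the vertex set $\binom{[n]}{k}$ of $K(n,k)$. Since $\mathcal{O}_k^+$ relates two quasi-pairs only when the underlying $k$-sets are disjoint, the edges inside $C_0$ (respectively $C_1$) are exactly the pairs with $A\cap B=\varnothing$, so each $C_i$ induces a copy of the Kneser graph $K(n,k)$; and the edges between $C_0$ and $C_1$ are exactly the pairs $(A^{+},\underline{A}^{+})\sim(B^{-},\underline{B}^{-})$ with $A\cap B=\varnothing$, which are precisely the cross-edges of $K(n,k)\times K_2$. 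By Definition~\ref{eq:defn-graph-product} this is exactly $K_2\bowtie K(n,k)$. The identical argument applied to $\Omega^{-}$ (with copies $\{(A^{+},\underline{A}^{-})\}$ and $\{(A^{-},\underline{A}^{+})\}$) yields a second copy of $K_2\bowtie K(n,k)$, disjoint from the first by the previous paragraph.

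The genuinely delicate point—and the step I expect to be the real obstacle—is the first one: verifying that $\mathcal{O}_k^+$ truly respects the sign-product partition, that is, that the sign pattern behaves consistently along \emph{every} edge and not merely along the edges incident to the single base vertex used to list the orbitals. This forces one to track how the two signs of a quasi-pair transform under the $\sym(n)$-action, through the sorting sign $\sgn{\sigma(\hat{S})}$ of each block $S$ and of its complement, and to confirm that their product is carried along each orbit consistently. Once this sign bookkeeping is in hand, the identification with $K_2\bowtie K(n,k)$ via the adjacency matrix $J_2\otimes A(K(n,k))$ is immediate.
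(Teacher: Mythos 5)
Your proof is correct and takes essentially the same approach as the paper: the paper partitions the vertex set into $V^{+,+}\cup V^{-,-}$ and $V^{+,-}\cup V^{-,+}$ (exactly your sign-product classes $\Omega^{+}$ and $\Omega^{-}$), observes from the description of $\mathcal{O}_k^+$ that no edge crosses this partition, and identifies each induced subgraph with $K_2\bowtie K(n,k)$ just as you do. The ``delicate point'' you flag at the end is not treated in any greater detail in the paper either---it likewise reads the adjacency off the orbital description given in the preceding theorem.
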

\begin{proof}
	Partition the vertices of $\mathcal{K}(n,k)$ into $\{ V^{+,+}, V^{+,-},V^{-,+},V^{-,-} \}$, where
	\begin{align*}
		V^{+,+} &= \left\{ (A^+,\underline{A}^+) :\ A\in \binom{[n]}{k} \right\},\\
		V^{+,-} &= \left\{ (A^+,\underline{A}^-) :\ A\in \binom{[n]}{k} \right\},\\
		V^{-,+} &= \left\{ (A^-,\underline{A}^+) :\ A\in \binom{[n]}{k} \right\},\\
		V^{-,-} &= \left\{ (A^-,\underline{A}^-) :\ A\in \binom{[n]}{k} \right\}.
	\end{align*}
	By definition of $\mathcal{O}^+_k$, it is clear that there is no edge between a vertex in $V^{+,+} \cup V^{-,-}$ and a vertex in $V^{+,-} \cup V^{-,+}$. In other words, $\mathcal{K}(n,k)$ is disconnected. A vertex $(A^+,\underline{A}^+)$ is adjacent to all vertices of the form $(B^+,\underline{B}^+)$ and $(B^-,\underline{B}^-)$, for all $k$-subsets $B$ with the property that $A\cap B = \varnothing$. This proves that the subgraph of $\mathcal{K}(n,k)$ induced by $V^{+,+} \cup V^{-,-}$ is in fact isomorphic to $K_2 \bowtie K(n,k)$. One can also prove with the same argument that the subgraph induced by $V^{-,+} \cup V^{+,-}$ is also isomorphic to $K_2 \bowtie K(n,k)$.
\end{proof}
\subsection{The $[n-1,1]$-module}

By Lemma~\ref{lem:two-orbits}, $K = \sym([n-1,1])$ has two orbits on $\Omega_{n,k}$. It is immediate that the partition from these orbits is $\pi = \{ \mathcal{S}, \Omega\setminus \mathcal{S} \}$, where
\begin{align*}
	\mathcal{S} = \left\{ \left(A^\pm,\underline{A}^\pm\right) :\ A\in \binom{[n]}{k} \mbox{ and } n\in A \right\}.
\end{align*}
It is clear that $\mathcal{S}$ is a coclique of $\mathcal{K}(n,k)$ since for any $(A^\pm,\underline{A}^\pm),(B^\pm,\underline{B}^\pm) \in \mathcal{S}$, $|A\cap B|\geq 1$. We claim that $\mathcal{S}$ is also a maximum coclique of $\mathcal{K}(n,k)$. To do this, we use the Ratio Bound as follows.

The proof of the following proposition is identical to the proof of Corollary~\ref{cor:eigenvalues-quasi-kneser}.
\begin{prop}
	The eigenvalues of $\mathcal{K}(n,k)$ are either $0$ or 
	\begin{align*}
		2(-1)^j\binom{n-k-j}{k-j},
	\end{align*}
	for $0\leq j\leq k$.
\end{prop}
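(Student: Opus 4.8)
The plan is to mimic the structure already established for the quasi Johnson scheme $\mathcal{J}^{+\pm}(n,k)$, since the statement asserts that the eigenvalues of $\mathcal{K}(n,k)$ follow the same pattern as those of the quasi Kneser graph in Corollary~\ref{cor:eigenvalues-quasi-kneser}. The excerpt tells us that the proof is "identical to the proof of Corollary~\ref{cor:eigenvalues-quasi-kneser}", so the natural route is to exploit the structural decomposition of $\mathcal{K}(n,k)$ provided by the immediately preceding proposition, namely that $\mathcal{K}(n,k)$ is isomorphic to \emph{two disjoint copies} of $K_2 \bowtie K(n,k)$.

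First I would recall that the eigenvalues of a disjoint union of graphs are simply the union (with multiplicity) of the eigenvalues of the components, so it suffices to compute the spectrum of a single copy of $K_2 \bowtie K(n,k)$. By the proposition on the graph product $X \bowtie K_2$ in Section~\ref{sect:background}, the adjacency matrix of $K_2 \bowtie K(n,k)$ equals $J_2 \otimes A$, where $A$ is the adjacency matrix of the Kneser graph $K(n,k) = J(n,k,k)$. Since the spectrum of the $2 \times 2$ all-ones matrix $J_2$ is $\{0, 2\}$, the eigenvalues of $J_2 \otimes A$ are exactly $\{0\} \cup \{2\mu : \mu \text{ an eigenvalue of } K(n,k)\}$, by the multiplicativity of the Kronecker product spectrum.

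Next I would invoke the well-known spectrum of the Kneser graph: the eigenvalues of $K(n,k)$ are
\begin{align*}
	(-1)^j \binom{n-k-j}{k-j}, \qquad 0 \leq j \leq k.
\end{align*}
Substituting these into the formula $2\mu$ yields precisely the eigenvalues $2(-1)^j\binom{n-k-j}{k-j}$ for $0 \leq j \leq k$, together with the eigenvalue $0$ coming from the null eigenvalue of $J_2$. Taking two disjoint copies merely doubles all multiplicities and does not introduce any new eigenvalue, so the eigenvalue set is unchanged. This establishes the claimed spectrum.

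I do not anticipate any genuine obstacle here, since every ingredient is already in place: the structural proposition identifying $\mathcal{K}(n,k)$ as two copies of $K_2 \bowtie K(n,k)$, the Kronecker-product spectrum formula for $X \bowtie K_2$, and the classical Kneser spectrum. The only point requiring a line of care is to confirm that passing to two disjoint copies does not alter the \emph{set} of eigenvalues (it affects only multiplicities), which is immediate. Thus the proof reduces to citing the preceding proposition and repeating verbatim the argument of Corollary~\ref{cor:eigenvalues-quasi-kneser}, with the single cosmetic change of accounting for the disjoint union.
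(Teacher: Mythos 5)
Your proof is correct and follows exactly the route the paper intends: the decomposition of $\mathcal{K}(n,k)$ into two disjoint copies of $K_2 \bowtie K(n,k)$, the Kronecker-product adjacency matrix $J_2 \otimes A$ with $\operatorname{spec}(J_2) = \{0,2\}$, and the classical Kneser spectrum $(-1)^j\binom{n-k-j}{k-j}$. You even make explicit the one detail the paper glosses over (that the disjoint union changes only multiplicities, not the eigenvalue set), so nothing further is needed.
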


From this proposition, we deduce that the smallest eigenvalue of $\mathcal{K}(n,k)$ is $-2\binom{n-k-1}{k-1}$ and the largest eigenvalue is $2\binom{n-k}{k}$. By the Ratio Bound, we have
\begin{align*}
	\alpha(\mathcal{K}(n,k)) \leq \frac{4\binom{n}{4}}{1-\frac{2\binom{n-k}{k}}{-2\binom{n-k-1}{k-1}}} = 4 \binom{n-1}{k-1}.
\end{align*}
Consequently, $\mathcal{S}$ is a coclique of maximum size.
By Theorem~\ref{thm:cocliques-module}, we deduce that the least eigenvalue $-2\binom{n-k-1}{k-1}$ is the eigenvalue afforded by the $[n-1,1]$-module.
\begin{rmk}
	In this subsection, we proved that the $[n-1,1]$-module is a subspace of the eigenspace corresponding to the eigenvalue $-2\binom{n-k-1}{k-1}$. However, this eigenspace is much larger than the $[n-1,1]$-module since the graph is a disjoint union of two copies of the same graph. We conjecture that the eigenvalue afforded by the $[2,1^{n-2}]$-module is also equal to $-2\binom{n-k-1}{k-1}$. This conjecture will be true if there exists $x\in\sym(n)$ such that $xH \subset \alt(n)$ and $(H,xH)$ corresponds to an edge in $\mathcal{K}(n,k)$. To see this, we first note that
	\begin{align*}
		\sum_{h\in H}\chi^{[n-1,1]}(xh) = -\frac{|H|}{\binom{n-k}{k}}\binom{n-k-1}{k-1}.
	\end{align*}
	Using this equality, the eigenvalue corresponding to $[2,1^{n-2}]$-module is
	\begin{align*}
		\frac{2\binom{n-k}{k}}{|H|}\sum_{h\in H} \chi^{\left[2,1^{n-2}\right]}(xh) = \frac{2\binom{n-k}{k}}{|H|} \sum_{h\in H} \chi^{[1^n]}(xh) \chi^{[n-1,1]}(xh) = -2\binom{n-k-1}{k-1}.
	\end{align*}
\end{rmk}

\section{The orbital scheme from line~5}\label{sect:line5}

Let $G=\sym(n)$, $H = \left(\sym(k) \times \sym(n-k)\right)\cap \alt(n)$ and consider the Gelfand pair $(G,H)$. Similar to the previous cases, we will introduce certain combinatorial objects that correspond to $G/H$. Then, we will find an orbital graph in the orbital scheme that gives an affirmative answer to Question~\ref{qst:main}.
\subsection{Combinatorial objects}
Recall that $\mathcal{I}_{n,k}$ is the set of all injections from $[k]$ to $[n]$. Given $A\in \mathcal{I}_{n,k}$, let $\operatorname{Im}(A)$ be the image of the map $A$ and define 
\begin{align}
	\underline{\mathcal{I}}_{n,k}(A):=\left\{ g: [n-k] \to[n]\setminus \operatorname{Im}(A) :\ g\mbox{ is injective} \right\}.
\end{align}
Since we may view an element of $\mathcal{I}_{n,k}$ as a sequence of the form $A = (a_1,a_2,\ldots,a_k)$ with distinct entries, recall that we have an action of $\sym(k)$ on $\mathcal{I}_{n,k}$ through $\sigma(A) := \left(a_{\sigma(1)},a_{\sigma(2)},\ldots,a_{\sigma(k)}\right)$, for any $\sigma\in \sym(k)$. Similarly, the group $\sym(n-k)$ also acts on any element of $\mathcal{I}_{n,n-k}$ by permuting the indices. These two actions give an induced action of $\sym(k)\times \sym(n-k)$ on $\{A\} \times \underline{\mathcal{I}}_{n,k}(A)$, for any $A\in\mathcal{I}_{n,k}$. Now, define $\mathcal{T}_{n,k} := \left\{ \left(A,B\right):\ A\in \mathcal{I}_{n,k} \mbox{ and } B \in \underline{\mathcal{I}}_{n,k}(A) \right\}$ and consider the relation $\mathcal{R}$ on $\mathcal{T}_{n,k}$ such that $(A,B)\mathcal{R}(C,D)$ if and only if there exists $\sigma\in \alt(n)$ such that
\begin{align*}
 	 \sigma(A) = C \mbox{ and } \sigma(B) = D.
\end{align*}
Let us compute the number of equivalence classes of $\mathcal{R}$. For any $(A,B)\in \mathcal{T}_{n,k}$, the equivalence class of $\mathcal{R}$ that contains $(A,B)$ is
\begin{align}
	\overline{(A,B)} :=\left\{ (\sigma(A),\sigma(B)):\ \sigma \in \left(\sym(k) \times\sym(n-k)\right) \cap \alt(n) \right\}
\end{align}

Let $S = \{a_1,a_2,\ldots,a_k\}$ and $\underline{S}:=[n]\setminus S = \{b_1,b_2,\ldots,b_{n-k}\}$. Consider the set $T = \left\{(A,B) \in \mathcal{T}_{n,k} :\ \operatorname{Im}(A) = S \right\}$. The group $\left(\sym(k) \times \sym(n-k)\right) \cap \alt(n)$ acts intransitively on $T$ since the orbit counting lemma gives
{\footnotesize
\begin{align*}
	\frac{2}{k!(n-k)!} \sum_{\sigma\in \left(\sym(k) \times \sym(n-k)\right) \cap \alt(n)} \left|\left\{ (A,B) \in T :\ \sigma.A = A \mbox{ and } \sigma.B = B \right\}\right| = \frac{2}{k!(n-k)!} k!(n-k)! = 2.
\end{align*}
}
Therefore, the set $S$ determines two orbits which are
\begin{align*}
	(S,\underline{S})^+ &:=  \overline{((a_1,a_2,\ldots,a_k),(b_1,b_2,\ldots,b_{n-k}))} ,\\
	(S,\underline{S})^- &:= \overline{((a_2,a_1,\ldots,a_k),(b_1,b_2,\ldots,b_{n-k}))}.
\end{align*}
Note that $(S,\underline{S})^+$ and $(S,\underline{S})^-$ are both fixed by $\left(\sym(k) \times \sym(n-k)\right)\cap \alt(n)$ and they are swapped by any odd permutation in $\left(\sym(k) \times \sym(n-k)\right)$.

For the remainder of this section, we let 
\begin{align*}
	\Omega_{n,k} := \left\{ (S,\underline{S})^\pm :\ S\in \binom{[n]}{k} \right\}.
\end{align*}
\subsection{Action of the symmetric group}
For any $A \in \mathcal{I}_{n,k}$, define
\begin{align}
	\sgn{\sigma(A),\sigma(\underline{A})} &:= 
	\begin{cases}
		+ & \mbox{ if }\sgn{\mathsf{sort}(\sigma(A))}\times\sgn{\mathsf{sort}(\sigma(\underline{A}))} = 1,\\
		- & \mbox{ otherwise.}
	\end{cases}
\end{align}
Recall that for any $k$-subset $S$ of $[n]$, $\hat{S}$ is the tuple obtained by arranging the entries of $S$ in an increasing order. For any $\sigma \in \sym(n)$ and $S\in \binom{[n]}{k}$, define
\begin{align}
	\sigma\left((S,\underline{S})^*\right)
	:=
	\begin{cases}
		(\sigma(S),\sigma(\underline{S}))^{\mathsf{sgn}(\sigma(\hat{S}),\sigma(\underline{\hat{S}}))} & \mbox{ if } * = +,\\
		(\sigma(S),\sigma(\underline{S}))^{-\mathsf{sgn}(\sigma(\hat{S}),\sigma(\underline{\hat{S}}))} & \mbox{ if } * = -.
	\end{cases}
\end{align}
It is clear that $\sym(n)$ acts transitively on $\Omega_{n,k}$ with this action. Moreover, for any $S\in \binom{[n]}{k}$, a permutation $\sigma$ is in the stabilizer of $(S,\underline{S})^+$ if $\sigma $ fixes $S$ setwise (which also implies that it fixes $\underline{S}$ setwise), and  $\sgn{\sigma(\hat{S}),\sigma(\underline{\hat{S}})} = +$ which is equivalent to $\sigma_{|S}$ and $\sigma_{|\underline{S}}$ being both even permutations or both odd permutations. Hence,
\begin{align*}
	\stab{\sym(n)}{(S,\underline{S})^+} = \left(\sym(k) \times \sym(n-k)\right)\cap \alt(n).
\end{align*}
The permutation $\sigma$ is in the stabilizer of $(S,\underline{S})^-$ if $\sigma$  fixes both $S$ and $\underline{S}$ setwise, and $\sgn{\sigma(S),\sigma(\underline{S})} = -$ which is equivalent to saying that $\sigma_{|S}$ and $\sigma_{|\underline{S}}$ are both even or odd permutations. 
Consequently, 
\begin{align*}
	\stab{\sym(n)}{(S,\underline{S})^-} = \left(\sym(k) \times \sym(n-k)\right) \cap \alt(n).
\end{align*}
The following proposition can be easily verified.
\begin{prop}
	The action of $\sym(n)$ on the cosets of $\left(\sym(k)\times \sym(n-k)\right)\cap \alt(n)$ is permutation equivalent to the action of $\sym(n)$ on $\Omega_{n,k}$.
\end{prop}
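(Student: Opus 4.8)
The plan is to invoke the standard correspondence between transitive group actions and coset spaces. Recall that if a group $G$ acts transitively on a set $\Omega$ and some point $\omega_0\in\Omega$ has stabiliser $G_{\omega_0}=H$, then the action of $G$ on $\Omega$ is permutation equivalent to the action of $G$ on the left cosets $G/H$, the equivalence being realised by $gH\mapsto g\cdot\omega_0$. Both hypotheses needed here are already established in the discussion preceding the statement: the action of $\sym(n)$ on $\Omega_{n,k}$ is transitive, and for a fixed base point $\omega_0:=(S_0,\underline{S_0})^+$ (with $S_0\in\binom{[n]}{k}$ arbitrary) the stabiliser was computed to be $\stab{\sym(n)}{\omega_0}=\left(\sym(k)\times\sym(n-k)\right)\cap\alt(n)=H$.

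Concretely, I would define $\phi:\sym(n)/H\to\Omega_{n,k}$ by $\phi(gH):=g\cdot\omega_0$ and verify the four routine properties. The map is well defined because $gH=g'H$ gives $g^{-1}g'\in H=\stab{\sym(n)}{\omega_0}$, whence $g'\cdot\omega_0=g\cdot\bigl((g^{-1}g')\cdot\omega_0\bigr)=g\cdot\omega_0$; injectivity is the converse, since $g\cdot\omega_0=g'\cdot\omega_0$ forces $g^{-1}g'$ to fix $\omega_0$ and hence lie in $H$; surjectivity is precisely the transitivity of the action on $\Omega_{n,k}$; and equivariance $\phi(h\cdot gH)=\phi(hgH)=(hg)\cdot\omega_0=h\cdot\phi(gH)$ is immediate from the action axioms. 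Thus $\phi$ is a $\sym(n)$-equivariant bijection, which is exactly the asserted permutation equivalence.

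The proposition itself is therefore immediate; the only genuinely delicate point, which lives in the supporting material rather than in the statement, is that the formula $\sigma\mapsto\sigma\bigl((S,\underline{S})^*\bigr)$ defines a bona fide group action. This amounts to checking that the twist by $\sgn{\sigma(\hat{S}),\sigma(\underline{\hat{S}})}$ is consistent under composition, i.e.\ that the sign accumulated along $\sigma\tau$ equals the product of the signs accumulated along $\tau$ and then along $\sigma$. This is the same cocycle-type identity already met for the quasi $k$-subset action in \eqref{eq:action}, and since the sorting-permutation parities multiply correctly it is a routine (if slightly tedious) verification. Once it is granted, together with the stabiliser computation, nothing further is needed, which is why the statement can indeed be verified without difficulty.
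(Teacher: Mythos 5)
Your proof is correct and matches the paper's approach: the paper in fact omits the argument entirely (stating only that the proposition ``can be easily verified''), relying precisely on the two facts established just before it --- transitivity of the action on $\Omega_{n,k}$ and the computation $\stab{\sym(n)}{(S,\underline{S})^{\pm}} = \left(\sym(k)\times\sym(n-k)\right)\cap\alt(n)$ --- which are exactly the inputs to your standard orbit--stabilizer correspondence. Your closing observation, that the only genuinely nontrivial point is the cocycle-type check that the sign-twisted formula defines a bona fide group action, correctly identifies the step the paper also leaves implicit.
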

\subsection{The orbital scheme $\mathcal{J}^\pm(n,k)$}

We will denote the orbital scheme obtained from $(G,H) = (\sym(n),\left(\sym(k)\times \sym(n-k)\right) \cap \alt(n))$ by $\mathcal{J}^\pm (n,k)$.
\begin{thm}
	An orbital of $\sym(n)$ acting on the $\Omega_{n,k}$ is one of the following
	\begin{enumerate}[1)]
		\item $\mathcal{O}_0^+ := \left\{\left((A,\underline{A})^+,(A,\underline{A})^+\right) :\ A\in \binom{[n]}{k} \right\}$,
		\item $\mathcal{O}_0^-:=\left\{\left((A,\underline{A})^+,(A,\underline{A})^-\right):\ A\in \binom{[n]}{k}\right\}$,
		\item $\mathcal{O}_i^-:=\left\{ \left((A,\underline{A})^+,(B,\underline{B})^-\right)  :\ |A\cap B| = k-i \right\}$, for $1\leq i\leq k$
		\item $\mathcal{O}_i^+:=\left\{ \left((A,\underline{A})^+,(B,\underline{B})^+\right) :\ |A\cap B| = k-i \right\}$, for $1\leq i\leq k$. 
	\end{enumerate}\label{thm:orbitals1}
\end{thm}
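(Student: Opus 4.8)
The plan is to follow the same template used in the proof of Theorem~\ref{thm:orbitals}: first pin down the number of suborbits from the rank, then exhibit one explicit representative for each claimed orbital, and finally show that these representatives lie in pairwise distinct suborbits of $\stab{\sym(n)}{(X_0,\underline{X}_0)^+} = H$. Since by \cite{godsil2010multiplicity} (see line~5 of Table~\ref{table:intrans2}) the rank of $\sym(n)$ acting on $\Omega_{n,k}$ equals $2k+2$, and since the list in the statement contains exactly $1+1+k+k = 2k+2$ relations, it suffices to produce $2k+2$ elements of $\Omega_{n,k}$ lying in distinct $H$-orbits and then to identify each with one of the claimed orbitals.

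First I would fix the base point $(X_0,\underline{X}_0)^+$ with $X_0 = \{1,2,\ldots,k\}$, choose $x_1<x_2<\cdots<x_k$ in $[n]\setminus[k]$, and set $X_i = \{1,2,\ldots,k-i\}\cup\{x_1,\ldots,x_i\}$ for $0\le i\le k$, exactly as in the earlier sections. The candidate set of representatives is
\[
\mathcal{R} = \{(X_0,\underline{X}_0)^+,\ (X_0,\underline{X}_0)^-\}\cup\{(X_i,\underline{X}_i)^+ : 1\le i\le k\}\cup\{(X_i,\underline{X}_i)^- : 1\le i\le k\}.
\]
To separate suborbits with different values of $i$, I would use that every $\sigma\in H\le \sym(k)\times\sym(n-k)$ fixes $X_0 = [k]$ setwise, so for any $(B,\underline{B})^{*}\in\Omega_{n,k}$ the quantity $|B\cap X_0|$ is constant on each $H$-orbit; since $|X_i\cap X_0| = k-i$, the elements attached to distinct indices (including $i=0$) automatically fall in distinct suborbits, which also separates $(X_i,\ldots)^{*}$ from $(X_j,\ldots)^{\dagger}$ whenever $i\ne j$.

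The heart of the argument — and the step I expect to be the main obstacle — is to show that $(X_i,\underline{X}_i)^+$ and $(X_i,\underline{X}_i)^-$ lie in distinct suborbits for each fixed $i$. Suppose $\sigma\in H$ satisfies $\sigma\big((X_i,\underline{X}_i)^+\big) = (X_i,\underline{X}_i)^-$. Matching the underlying sets forces $\sigma$ to fix $X_i$ (hence $\underline{X}_i$) setwise, so by the definition of the action this requires $\sgn{\sigma(\hat{X}_i),\sigma(\underline{\hat{X}}_i)} = -$. The key computation is that, when $\sigma$ stabilises $X_i$ setwise, the sorting sign $\sgn{\mathsf{sort}(\sigma(\hat{X}_i))}$ equals the sign of $\sigma_{|X_i}$ viewed as a permutation of the totally ordered set $X_i$, and likewise for $\underline{X}_i$; consequently
\[
\sgn{\sigma(\hat{X}_i),\sigma(\underline{\hat{X}}_i)} = \sgn{\sigma_{|X_i}}\cdot \sgn{\sigma_{|\underline{X}_i}} = \sgn{\sigma}.
\]
Since $H = \left(\sym(k)\times\sym(n-k)\right)\cap\alt(n)\le \alt(n)$, we have $\sgn{\sigma}=1$, so the sign is forced to be $+$, contradicting the assumption. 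Thus no element of $H$ can interchange the two signs, and the $2k+2$ members of $\mathcal{R}$ are pairwise non-equivalent. This mirrors the parity argument of Theorem~\ref{thm:orbitals}, the difference being that the relevant constraint here is $\sgn{\sigma_{|X_i}}\cdot\sgn{\sigma_{|\underline{X}_i}}=1$ coming from $H\le\alt(n)$.

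Having established that $\mathcal{R}$ is a complete set of suborbit representatives, I would finish exactly as in Theorem~\ref{thm:orbitals}: by the orbital--suborbit correspondence, each element of $\mathcal{R}$ determines one orbital, obtained by letting $\sym(n)$ act on the pair $\big((X_0,\underline{X}_0)^+,\, r\big)$ for $r\in\mathcal{R}$. Reading off the intersection size $|A\cap B|=k-i$ together with the recorded sign then identifies these orbitals with $\mathcal{O}_0^{\pm}$ and $\mathcal{O}_i^{\pm}$ $(1\le i\le k)$ as listed, and the symmetry of each relation shows that every orbital is self-paired.
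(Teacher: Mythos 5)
Your proposal is correct, and it follows the same overall template as the paper's proof: bound the number of orbitals by the rank $2k+2$, take the representatives $(X_i,\underline{X}_i)^{\pm}$ for $0\leq i\leq k$, separate representatives with different indices via the $H$-invariant $|B\cap X_0|$, and then separate the two signs for each fixed $i$ by a parity argument, finishing with the orbital--suborbit correspondence. The one genuinely different ingredient is how you derive the parity contradiction. The paper assumes $\sigma\in H$ maps $(X_i,\underline{X}_i)^+$ to $(X_i,\underline{X}_i)^-$, splits $\sigma$ over the four blocks $X_0\cap X_i$, $X_0\cap\underline{X}_i$, $\underline{X}_0\cap X_i$, $\underline{X}_0\cap\underline{X}_i$, and plays the resulting sign conditions against the requirement that $\sigma$ stabilize the base point $(X_0,\underline{X}_0)^+$. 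You instead observe that any $\sigma\in H$ fixing $X_i$ setwise satisfies $\sgn{\sigma_{|X_i}}\cdot\sgn{\sigma_{|\underline{X}_i}}=\sgn{\sigma}=1$, simply because $H\leq\alt(n)$, so the sign $-$ required to flip $(X_i,\underline{X}_i)^+$ into $(X_i,\underline{X}_i)^-$ can never occur. Both arguments rest on the multiplicativity of the sign over invariant blocks, and your auxiliary identity $\sgn{\mathsf{sort}(\sigma(\hat{X}_i))}=\sgn{\sigma_{|X_i}}$ is exactly the computation the paper carries out in Section~\ref{sect:line2-3}; but your formulation avoids the four-block case analysis (whose wording in the paper is in fact somewhat garbled), makes transparent that the only obstruction used is $H\leq\alt(n)$, and gives the slightly stronger conclusion that the $H$-orbit of $(X_i,\underline{X}_i)^+$ contains no element of the form $(X_i,\underline{X}_i)^-$ at all. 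In short: same strategy, with a cleaner execution of the key step.
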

\begin{proof}
	We note first that the action of $\sym(n)$ on $\Omega_{n,k}$ has rank $2k+2$ (see \cite{godsil2010multiplicity} for details).		
	Consider the $k$ distinct elements $x_1<x_2<\ldots<x_k$ of $[n]\setminus \{1,2,\ldots,k\}$. Let $X_i = \{1,2,\ldots,k-i,x_1,x_2,\ldots,x_i\}$ for $0\leq i\leq k$. Note that $X_0 = \{1,2,\ldots,k\},\ X_k = \{ x_1,x_2,\ldots,x_k \}$. We claim that the set 
	\begin{align*}
		S = \left\{ (X_i,\underline{X_i})^+ :\ 0\leq i \leq k \right\} \cup \left\{ (X_i,\underline{X_i})^-:\ 0\leq i\leq k  \right\}
	\end{align*}
	has $2k+2$ elements in different suborbits of $H = \left(\sym(k)\times \sym(n-k)\right) \cap \alt(k)$. 
	
	First, note that if $A \in \binom{[n]}{k}$ such that $|A\cap X| = i$, then $((X,\underline{X})^+,(A,\underline{A})^*)$ can only be mapped by $H$ to an element of the form $((X,\underline{X})^+,(B,\underline{B})^\dagger)$, where $|B\cap X| = i$.
	Now, if $\sigma \in \sym(n)$ such that $\sigma\left((X_i,\underline{X_i})^+\right) = (X_i,\underline{X_i})^-$, then by definition we must have $\sgn{\sigma(\hat{X_i}),\sigma(\hat{\underline{X}_i})}= -$. Hence, we can assume without loss of generality that $\sigma_{|X_i}$ is an even permutation and $\sigma_{|\underline{X_i}}$ is an odd permutation. Therefore, $\sigma_{|X_0\cap X_i}$ and $\sigma_{|X_0\cap \underline{X}_i}$ must have the same sign, whereas $\sigma_{|\underline{X}_0\cap {X}_i}$ and $\sigma_{|\underline{X}_0\cap \underline{X}_i}$ have opposite signs. Using this, one can easily prove that $\sigma\left((X_0,\underline{X}_0)\right)^+$ cannot be equal to $(X_0,\underline{X}_0)^+$. This completes the proof.
\end{proof}

Let $\mathcal{K}^+(n,k)$ and $\mathcal{K}^-(n,k)$ be the two orbital graphs corresponding to $\mathcal{O}_k^+$ and $\mathcal{O}_k^-$, respectively. By definition of the orbital $\mathcal{O}^\pm_k$, the following lemma follows immediately.
\begin{lem}
	We have $\mathcal{K}^-(n,k) = K_2\times K(n,k)$ and $\mathcal{K}^+(n,k)$ is a disjoint union of two copies of $K(n,k)$.
\end{lem}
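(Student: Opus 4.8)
The plan is to make the vertex set $\Omega_{n,k}$ explicit as a product and then read off the adjacency rule encoded by $\mathcal{O}_k^+$ and $\mathcal{O}_k^-$. First I would fix the bijection $\Omega_{n,k}\to\{+,-\}\times\binom{[n]}{k}$ sending $(S,\underline{S})^{*}$ to $(*,S)$, which presents the $2\binom{n}{k}$ quasi-objects as two ``sign layers'', each a copy of $\binom{[n]}{k}$. I would also recall the two target adjacency rules: in the direct product $K_2\times K(n,k)$ one has $(a,A)\sim(b,B)$ exactly when $a\neq b$ and $A\cap B=\varnothing$, whereas in a disjoint union of two copies of $K(n,k)$ one has $(a,A)\sim(b,B)$ exactly when $a=b$ and $A\cap B=\varnothing$. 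The whole argument is then a comparison of these rules with the relations $\mathcal{O}_k^{\pm}$ of Theorem~\ref{thm:orbitals1}.

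For $\mathcal{K}^-(n,k)$, the relation $\mathcal{O}_k^-$ consists of the pairs $\left((A,\underline{A})^{*},(B,\underline{B})^{\dagger}\right)$ with $A\cap B=\varnothing$ and opposite signs $*\neq\dagger$. Under the bijection above this is precisely the condition ``$a\neq b$ and $A\cap B=\varnothing$'', so the identification $\mathcal{K}^-(n,k)=K_2\times K(n,k)$ holds verbatim. For $\mathcal{K}^+(n,k)$, the relation $\mathcal{O}_k^+$ consists of the pairs with $A\cap B=\varnothing$ and \emph{equal} signs $*=\dagger$; hence there is no edge joining the $+$ layer to the $-$ layer, and the subgraph induced on each layer is exactly the Kneser graph $K(n,k)$ (adjacency iff the underlying $k$-sets are disjoint). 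This yields the disjoint union of two copies of $K(n,k)$. Both descriptions are manifestly symmetric in the two endpoints, so the relations are undirected and the two objects are genuine graphs.

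The only non-formal ingredient -- and the one supplied by Theorem~\ref{thm:orbitals1} -- is that the sign condition used above is well posed. The label $\pm$ carried by a quasi-object is not by itself $\sym(n)$-invariant: an odd permutation supported inside a single block $A$ or $\underline{A}$ flips it, since the action of $\sigma$ multiplies the sign by the product of the parities of the permutations sorting $\sigma(\hat{A})$ and $\sigma(\hat{\underline{A}})$. What is invariant, and what actually separates $\mathcal{O}_k^+$ from $\mathcal{O}_k^-$, is the \emph{relative} sign of a pair of disjoint quasi-objects. I expect this to be the one place needing care: one must confirm that ``equal sign'' versus ``opposite sign'' is preserved along every $\sym(n)$-translate, which is exactly the parity bookkeeping on $\sigma$ restricted to the blocks that underlies Theorem~\ref{thm:orbitals1}. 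Granting that invariance, the two identifications are immediate comparisons of adjacency rules, and the lemma follows.
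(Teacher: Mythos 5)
Your proposal is correct and follows essentially the same route as the paper: both proofs simply read off from the orbitals $\mathcal{O}_k^{\pm}$ of Theorem~\ref{thm:orbitals1} that opposite-sign disjoint pairs give exactly the adjacency rule of $K_2\times K(n,k)$, while equal-sign disjoint pairs give two sign layers each inducing a copy of $K(n,k)$ with no edges between them. If anything, your explicit comparison of adjacency rules (and your remark that the same/opposite-sign dichotomy is $\sym(n)$-invariant, which is precisely what Theorem~\ref{thm:orbitals1} guarantees) is slightly more careful than the paper's terse ``since this graph is bipartite, we clearly have $\mathcal{K}^-(n,k)=K_2\times K(n,k)$.''
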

\begin{proof}
	In $\mathcal{K}^-(n,k)$, the edges are of the form $\left((A,\underline{A})^+,(B,\underline{B})^-\right)$ or $\left((A,\underline{A})^-,(B,\underline{B})^+\right)$, where $|A\cap B| = k-1$. Since this graph is bipartite, we clearly have $\mathcal{K}^-(n,k) = K_2\times K(n,k)$. 
	
	The graph $\mathcal{K}^+(n,k)$ is disconnected since there is no edge between vertices of the form $(A,\underline{A})^+$ and $(B,\underline{B})^-$, for any $A,B\in \binom{[n]}{k}$. The subgraph of $\mathcal{K}^-(n,k)$ induced by all vertices of the form $(A,\underline{A})^+$, for all $A \in \binom{[n]}{k}$, is isomorphic to $K(n,k)$. The same holds for the vertices of the form $(A,\underline{A})^-$, for all $A\in \binom{[n]}{k}$. This completes the proof.
\end{proof}
\subsection{The $[n-1,1]$-module}

Let $K = \sym([n-1,1])$. The orbit partition induced by $K$ on $\Omega_{n,k}$ is $\pi = \{ \mathcal{S},\Omega_{n,k}\setminus \mathcal{S} \}$, where 
\begin{align*}
	\mathcal{S} = \left\{ \left(A,\underline{A}\right)^\pm :\ A\in \binom{[n]}{k} \mbox{ and } n\in A \right\}.
\end{align*}
We note that $|\mathcal{S}| = 2\binom{n-1}{k-1}$.
It is clear that $\mathcal{S}$ is a coclique of both orbital graphs of $\mathcal{K}^-(n,k)$ and $\mathcal{K}^+(n,k)$, however, it is not a maximum coclique of the one corresponding to $\mathcal{K}^-(n,k)$ since the latter is bipartite. That is, the graph $\mathcal{K}^-(n,k)$ gives an affirmative answer to the first part of Question~\ref{qst:main}, however, the second part of the question is not satisfied.
\begin{thm}
	The least eigenvalue of $\mathcal{K}^-(n,k)$ is afforded by the $[1^{n}]$-module.
\end{thm}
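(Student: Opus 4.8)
The plan is to prove the stronger quantitative statement that the eigenvalue $\xi_{[1^n]}$ afforded by the sign module $S^{[1^n]}$ is exactly $-\binom{n-k}{k}$, and then observe that this is the least eigenvalue of $\mathcal{K}^-(n,k)$. The second point is immediate from the previous lemma: since $\mathcal{K}^-(n,k)=K_2\times K(n,k)$, its adjacency matrix is $A(K_2)\otimes A(K(n,k))$, so its eigenvalues are $\pm\theta$ with $\theta$ ranging over the eigenvalues of $K(n,k)$; as $K(n,k)$ is $\binom{n-k}{k}$-regular, all its eigenvalues lie in $\left[-\binom{n-k}{k},\binom{n-k}{k}\right]$, whence the least eigenvalue of $\mathcal{K}^-(n,k)$ is $-\binom{n-k}{k}$.

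To compute $\xi_{[1^n]}$ I would apply Theorem~\ref{thm:general-eigenvalues} with $\phi=\chi^{[1^n]}=\sgn{\cdot}$. The valency of $\mathcal{K}^-(n,k)$ is $\binom{n-k}{k}$, and because $H=\left(\sym(k)\times\sym(n-k)\right)\cap\alt(n)$ consists entirely of even permutations, $\sgn{x_\ell h}=\sgn{x_\ell}$ for all $h\in H$; the sum therefore collapses and
\[
\xi_{[1^n]}\!\left(\mathcal{K}^-(n,k)\right)=\frac{\binom{n-k}{k}}{|H|}\sum_{h\in H}\sgn{x_\ell h}=\binom{n-k}{k}\,\sgn{x_\ell},
\]
where $x_\ell\in\sym(n)$ is any element with $(H,x_\ell H)\in\mathcal{O}_k^-$. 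The entire theorem thus reduces to the single parity assertion $\sgn{x_\ell}=-1$.

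To establish this I would use the sign invariant $\varepsilon\colon\Omega_{n,k}\to\{\pm1\}$ given on a representative pair $(A,B)$, $A\in\mathcal{I}_{n,k}$, $B\in\underline{\mathcal{I}}_{n,k}(A)$, by $\varepsilon\big(\overline{(A,B)}\big)=\sgn{w}$, where $w\in\sym(n)$ is the permutation with one-line notation $(a_1,\dots,a_k,b_1,\dots,b_{n-k})$. This is well defined since two representatives differ by an element of $H\le\alt(n)$, and it is equivariant, $\varepsilon(\sigma\cdot\omega)=\sgn{\sigma}\,\varepsilon(\omega)$, because acting by $\sigma$ replaces $w$ by $\sigma w$. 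The base point (the coset $H$, i.e.\ $(X_0,\underline{X_0})^+$) has $\varepsilon=+1$. The key geometric fact I would verify is that the two level sets of $\varepsilon$ are exactly the bipartition classes of $\mathcal{K}^-(n,k)=K_2\times K(n,k)$; equivalently, that every edge of $\mathcal{K}^-(n,k)$ joins a vertex with $\varepsilon=+1$ to one with $\varepsilon=-1$. Granting this, the $\mathcal{O}_k^-$-neighbour $x_\ell\cdot(X_0,\underline{X_0})^+$ of the base point has $\varepsilon=-1$, and equivariance forces $\sgn{x_\ell}=-1$. Conceptually this also explains the result: $\varepsilon$ is a $\pm1$-valued, $\sgn$-equivariant function, hence spans the unique copy of $S^{[1^n]}$ in $\CC\Omega_{n,k}$, and it is precisely the ``signed all-ones'' eigenvector for $-\binom{n-k}{k}$; the trivial module affords the valency $+\binom{n-k}{k}$, and the sign module is its bipartite mirror image.

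The hard part is the geometric fact above, namely that traversing an edge of $\mathcal{K}^-(n,k)$ reverses $\varepsilon$ — that is, that $\mathcal{O}_k^-$ consists precisely of the disjoint pairs $(u,v)$ with $\varepsilon(u)\varepsilon(v)=-1$, while $\mathcal{O}_k^+$ consists of those with $\varepsilon(u)\varepsilon(v)=+1$. The subtlety is that the bipartition classes are \emph{not} the superscript classes $\{*=+\}$ and $\{*=-\}$: the superscript and the intrinsic parity are related by $\varepsilon\big((A,\underline A)^{*}\big)=*\,\sgn{w(\hat A,\underline{\hat A})}$, which also involves the shuffle sign of $A$. I would resolve this by noting that ``disjoint and $\varepsilon(u)\varepsilon(v)=-1$'' is a $\sym(n)$-invariant relation (equivariance of $\varepsilon$ makes the product $\varepsilon(u)\varepsilon(v)$ constant on each orbital), so it is a union of orbitals; since there are exactly two orbitals supported on disjoint pairs, it suffices to evaluate $\varepsilon(u)\varepsilon(v)$ on one representative of each in order to match them with $\mathcal{O}_k^{\pm}$.
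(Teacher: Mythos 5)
Your proposal has the same skeleton as the paper's (terse) proof: the preceding lemma $\mathcal{K}^-(n,k)=K_2\times K(n,k)$ yields the least eigenvalue $-\binom{n-k}{k}$, and Theorem~\ref{thm:general-eigenvalues} is invoked for the sign module. But where the paper only writes ``one can verify that this eigenvalue is afforded by the $[1^n]$-module'', you actually supply the verification, and that core machinery is correct and genuinely valuable: since $H\leq \alt(n)$, the eigenvalue formula collapses to $\xi_{[1^n]}=\binom{n-k}{k}\,\sgn{x_\ell}$; your coset-sign function $\varepsilon(wH)=\sgn{w}$ is well defined (again because $H\leq\alt(n)$), is $\sgn$-equivariant, spans the unique copy of $S^{[1^n]}$ in $\CC\Omega_{n,k}$, and makes $\varepsilon(u)\varepsilon(v)$ constant on orbitals. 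So, as you say, everything reduces to showing that the edges of $\mathcal{K}^-(n,k)$ reverse $\varepsilon$.

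The gap is in how you propose to finish: evaluating $\varepsilon(u)\varepsilon(v)$ on one representative of each of the two orbitals on disjoint pairs ``in order to match them with $\mathcal{O}_k^{\pm}$'' is not a well-posed step, because the paper's labels $\mathcal{O}_k^{\pm}$ are themselves not well defined: the displayed sets in Theorem~\ref{thm:orbitals1} are not unions of orbitals, since (as your own invariant shows) the superscript pattern is not $\sym(n)$-invariant. Concretely, for the paper's representative pair $\bigl((X_0,\underline{X_0})^+,(X_k,\underline{X_k})^-\bigr)$ one gets $\varepsilon(u)\varepsilon(v)=-\sgn{w_{X_k}}$, where $w_{X_k}$ is the permutation with one-line notation $(\hat{X_k},\hat{\underline{X}}_k)$; its sign is $(-1)^{x_1+\cdots+x_k-\binom{k+1}{2}}$, which depends on the choice of $X_k\subseteq[n]\setminus[k]$. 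For $(n,k)=(5,2)$ the product is $-1$ for $X_2=\{3,4\}$ but $+1$ for $X_2=\{3,5\}$, and for the natural choice $X_k=\{k+1,\ldots,2k\}$ it equals $(-1)^{k+1}$; so for every odd $k$ your matching, executed literally against the paper's representatives, would return $\sgn{x_\ell}=+1$ and hence the wrong eigenvalue. The repair stays entirely inside your framework and bypasses the paper's labels: take as the definition of $\mathcal{K}^-(n,k)$ the property you already use, namely that it is the orbital graph on disjoint pairs isomorphic to $K_2\times K(n,k)$. The other orbital on disjoint pairs, the one with $\varepsilon(u)\varepsilon(v)=+1$, is a disjoint union of two copies of $K(n,k)$, which is non-bipartite when $n>2k$, whereas $K_2\times K(n,k)$ is bipartite; hence $\mathcal{K}^-(n,k)$ must be the orbital with $\varepsilon(u)\varepsilon(v)=-1$, every edge reverses $\varepsilon$, and $\sgn{x_\ell}=-1$ follows. (When $n=2k$ both orbital graphs are perfect matchings and cannot be distinguished structurally; there the $\varepsilon$-based description is the only unambiguous definition of $\mathcal{K}^-(n,k)$, and with it your computation is immediate.)
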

\begin{proof}
	By Theorem~\ref{thm:cocliques-module}, the eigenvalue afforded by the $[n-1,1]$-module is 
	\begin{align*}
		 -\frac{2\binom{n-k}{k}\binom{n-1}{k-1}}{2\binom{n}{k} - 2\binom{n-1}{k-1}}.
	\end{align*}
	Since $\mathcal{K}^-(n,k)$ is bipartite, its smallest eigenvalue is equal to $-\binom{n-k}{k}$. One can verify that this eigenvalue is afforded by the $[1^n]$-module by using Theorem~\ref{thm:general-eigenvalues}.
\end{proof}

\begin{thm}
	The least eigenvalue of $\mathcal{K}^+(n,k)$ is afforded by the $[n-1,1]$-module.
\end{thm}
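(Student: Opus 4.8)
The plan is to read off the least eigenvalue of $\mathcal{K}^+(n,k)$ from its explicit structure, compute the eigenvalue forced onto the $[n-1,1]$-module by the orbit partition of $\sym([n-1,1])$, and check that the two numbers coincide. The conceptual heavy lifting is already done by Theorem~\ref{thm:cocliques-module}; what remains is bookkeeping of parameters and a short binomial identity.

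First I would record the relevant parameters. By the preceding lemma, $\mathcal{K}^+(n,k)$ is a disjoint union of two copies of the Kneser graph $K(n,k)$, so its spectrum is exactly that of $K(n,k)$ (with multiplicities doubled), namely the values $(-1)^j\binom{n-k-j}{k-j}$ for $0\le j\le k$; in particular its least eigenvalue is $-\binom{n-k-1}{k-1}$, attained at $j=1$. The orbital $\mathcal{O}_k^+$ makes $\mathcal{K}^+(n,k)$ regular of degree $k_i=\binom{n-k}{k}$, and I would also note $|\Omega_{n,k}|=2\binom{n}{k}$ together with $|\mathcal{S}|=2\binom{n-1}{k-1}$ for the orbit $\mathcal{S}=\{(A,\underline{A})^\pm : n\in A\}$ of $K=\sym([n-1,1])$.

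Next, since any two members of $\mathcal{S}$ correspond to $k$-subsets both containing $n$, they are never disjoint, so $\mathcal{S}$ is a coclique of $\mathcal{K}^+(n,k)$, as already observed. Theorem~\ref{thm:cocliques-module} then applies directly: because $\mathcal{S}$ is a coclique, the eigenvalue $-\tfrac{k_i|\mathcal{S}|}{|\Omega_{n,k}|-|\mathcal{S}|}$ is afforded by the $[n-1,1]$-module. Using Pascal's rule $\binom{n}{k}-\binom{n-1}{k-1}=\binom{n-1}{k}$ and the identity $\binom{n-1}{k-1}/\binom{n-1}{k}=k/(n-k)$, this collapses:
\[
-\frac{k_i|\mathcal{S}|}{|\Omega_{n,k}|-|\mathcal{S}|}
= -\frac{\binom{n-k}{k}\binom{n-1}{k-1}}{\binom{n-1}{k}}
= -\binom{n-k-1}{k-1}.
\]
Since the right-hand side is precisely the least eigenvalue of $\mathcal{K}^+(n,k)$ computed in the first step, the $[n-1,1]$-module affords the least eigenvalue, which is the assertion.

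There is no genuine analytic obstacle here; the only point requiring care is that the number produced by the equitable-partition argument is truly the \emph{smallest} eigenvalue rather than merely \emph{some} eigenvalue of the graph. That subtlety is resolved not by extra spectral analysis but by the explicit match against the known least eigenvalue $-\binom{n-k-1}{k-1}$ of the Kneser graph, which is available precisely because $\mathcal{K}^+(n,k)$ splits as two copies of $K(n,k)$. (One could alternatively phrase the verification via the Ratio Bound, noting that $\mathcal{S}$ meets it with equality, but the direct comparison above is shorter and self-contained.)
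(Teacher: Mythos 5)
Your proof is correct and takes essentially the same route as the paper's: apply Theorem~\ref{thm:cocliques-module} to the coclique $\mathcal{S}$ to identify the eigenvalue afforded by the $[n-1,1]$-module, read off the least eigenvalue $-\binom{n-k-1}{k-1}$ of $\mathcal{K}^+(n,k)$ from its decomposition into two disjoint copies of $K(n,k)$, and check via the binomial identity that the two numbers coincide. Your extra care in spelling out the binomial collapse and in flagging that the equitable-partition argument alone yields only \emph{some} eigenvalue (resolved by the explicit match) are sound elaborations of the same argument, not a different approach.
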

\begin{proof}
	Since $\mathcal{S}$ is a coclique, by Theorem~\ref{thm:cocliques-module}, the eigenvalue $-2\frac{\binom{n-k}{k} \binom{n-1}{k-1}}{2\binom{n-1}{k}}$ is the eigenvalue afforded by the $[n-1,1]$-module. As $\mathcal{K}^+(n,k) $ is a disjoint union of two copies of $K(n,k)$,  its smallest eigenvalue is $-\binom{n-k-1}{k-1} = -\frac{\binom{n-k}{k} \binom{n-1}{k-1}}{\binom{n-1}{k}}$. 
\end{proof}

\section{The orbital scheme from line~6 of Table~\ref{table:intrans2}}\label{sect:qpm}

In this section, we prove that the answer to Question~\ref{qst:main} is affirmative for the group in line~6 of Table~\ref{table:intrans2}. First, we prove that Question~\ref{qst:main} is affirmative for the Gelfand pair $(\sym(2k),\sym(2)\wr \sym(k))$. Then, we use a graph isomorphism to prove that Question~\ref{qst:main} also holds for the Gelfand pair $(\sym(2k+1),\sym(2)\wr \sym(k))$. 

\subsection{Question~\ref{qst:main} for the Gelfand pair $(\sym(2k),\sym(2)\wr \sym(k)$}

For any $k\geq 2$, the action of $\sym(2k)$ on the cosets of $\sym(2) \wr \sym(k)$ is multiplicity-free (see \cite{godsil2010multiplicity}). We note that the second largest in dominance ordering in $\Lambda(2k,\sym(2)\wr \sym(k))$ is $[2k-2,2]$. The action of $\sym(2k)$ on the cosets of $\sym(2)\wr \sym(k)$ is equivalent to the action of $\sym(2k)$ on the perfect matchings of the complete graph $K_{2k}$. A perfect matching of $K_{2k}$ is a partition of the set $[2k]$ into $k$ subsets of size $2$. We will denote the set of all perfect matchings of $K_{2k}$ by $\mathcal{P}_k.$ 

For any $\lambda = [\lambda_1,\lambda_2,\ldots,\lambda_t]$, let $\mathcal{O}_{[2\lambda_1,2\lambda_2,\ldots,2\lambda_t]}$ be the set of all pairs $(P,Q)$ of elements of $\mathcal{P}_k$ such that $P \cup Q$ is a union of $t$ cycles of length $2\lambda_1,2\lambda_2,\ldots,2\lambda_t$. We note that an edge is considered a $2$-cycle in this definition.

For any $\lambda = [\lambda_1,\lambda_2,\ldots,\lambda_k] \vdash k$, we define $2\lambda := [2\lambda_1,2\lambda_2,\ldots,2\lambda_k]$. The association scheme arising from $(\sym(2k),\sym(2)\wr \sym(k))$ is the well-known \emph{perfect matching association scheme}. 
The orbitals of the corresponding group action are all $\mathcal{O}_{2\lambda}$, where $\lambda \vdash k$. Next, we prove that the answer to Question~\ref{qst:main} follows from a result in \cite{godsil2016algebraic,lindzey2017erdHos}.

For any $n$, we let 
\begin{align}
	n!!=
	\begin{cases}
		n\times (n-2) \times  \ldots \times 3 \times 1 & \mbox{ if $n$ is odd} \\
		n \times (n-2) \times \ldots \times 4 \times 2 & \mbox{ otherwise.}
	\end{cases}
\end{align} 
Using this notation, it is not hard to see that
\begin{align*}
	|\mathcal{P}_k| &= (2k-1)!!.
\end{align*}

For any $\mathcal{F} \subset \mathcal{P}_k$, we say that $\mathcal{F}$ is intersecting if $|P \cap Q| \geq 1$, for any $P$ and $Q$ in $\mathcal{F}$. We state the following theorem about intersecting families of $\mathcal{P}_k$.
\begin{thm}[\cite{godsil2016algebraic,lindzey2017erdHos}]
	If $\mathcal{F} \subset \mathcal{P}_k$ is intersecting, then $|\mathcal{F}|\leq (2k-3)!!$. Moreover, equality holds if and only if $\mathcal{F}$ consists of all perfect matchings with a fixed edge, i.e., an orbit of size $(2k-3)!!$ of a conjugate of $\sym(2k-2) \times \sym(2)$.\label{thm:pm}
\end{thm}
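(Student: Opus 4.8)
The plan is to follow the algebraic (eigenvalue) route underlying \cite{godsil2016algebraic,lindzey2017erdHos}. An intersecting family is precisely a coclique of the \emph{perfect matching derangement graph} $M_k$, the orbital graph on $\mathcal{P}_k$ in which two matchings are adjacent if and only if they share no edge; equivalently, $M_k$ is the union of the orbital graphs $\mathcal{O}_{2\lambda}$ over those $\lambda\vdash k$ all of whose parts are at least $2$ (sharing no edge means the union $P\cup Q$ has no $2$-cycle, so every cycle has length at least $4$). Since $(2k-3)!!$ is exactly the number of matchings containing a prescribed edge, each canonical family is a coclique of $M_k$, and the theorem reduces to showing that these are the cocliques of maximum size and the only ones.

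First I would record the valency of $M_k$. Inclusion--exclusion over the $k$ edges of a fixed matching gives the number of matchings disjoint from it as
\begin{align*}
	D(k)=\sum_{j=0}^{k}(-1)^j\binom{k}{j}(2k-2j-1)!!,
\end{align*}
with the convention $(-1)!!=1$; this is the eigenvalue of $M_k$ afforded by the trivial module $\chi^{[2k]}$. Because $M_k$ lies in the perfect matching association scheme, all of its eigenvalues are indexed by the partitions $\mu\vdash k$, equivalently by the modules $\chi^{2\mu}$.

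The crux is to compute the least eigenvalue $\tau$ of $M_k$ and to show that it is afforded precisely by the $[2k-2,2]$-module, with value $\tau=-\tfrac{D(k)}{2k-2}$. This is the main obstacle: these eigenvalues do not come from a short manipulation of Theorem~\ref{thm:general-eigenvalues}, and the cleanest route is the symmetric-function machinery (zonal, i.e.\ Jack at $\alpha=2$, polynomials) used in \cite{lindzey2017erdHos} to evaluate each $\chi^{2\mu}$-eigenvalue and to verify that $[2k-2,2]$ affords the most negative one. Granting this, the Ratio Bound (Theorem~\ref{thm:ratiobound}) gives
\begin{align*}
	\alpha(M_k)\leq\frac{(2k-1)!!}{1-\frac{D(k)}{\tau}}=\frac{(2k-1)!!}{1+(2k-2)}=\frac{(2k-1)!!}{2k-1}=(2k-3)!!,
\end{align*}
proving the bound.

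For the equality case I would use the tightness clause of Theorem~\ref{thm:ratiobound}: if $\mathcal{F}$ is a coclique of size $(2k-3)!!$, then $v_{\mathcal{F}}-\tfrac{|\mathcal{F}|}{(2k-1)!!}\mathbf{1}$ is a $\tau$-eigenvector and hence lies in the $\tau$-eigenspace, which by the previous step equals the $[2k-2,2]$-module. The remaining difficulty is to pin this module down concretely: exactly as in the proof of Theorem~\ref{thm:cocliques-module}, the $[2k-2,2]$-module is spanned by the centred characteristic vectors of the fixed-edge families, so a $0/1$ vector whose centring lands in this space must itself be the characteristic vector of a single fixed-edge family. Excluding every other $0/1$ solution is where the rigidity of the matching scheme enters, and carrying out this identification cleanly is the step I expect to require the most care.
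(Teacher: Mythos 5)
Your proposal follows essentially the same route as the proofs in \cite{godsil2016algebraic,lindzey2017erdHos}, which is all the paper itself offers for this statement (a citation plus the summary in Section~\ref{sect:qpm}): identify intersecting families with cocliques of the perfect matching derangement graph, establish that its least eigenvalue $-D(k)/(2k-2)$ is afforded by the $[2k-2,2]$-module, apply the Ratio Bound to get $(2k-1)!!/(2k-1)=(2k-3)!!$, and handle equality by forcing the centred characteristic vector into that module. All of your stated facts (valency, least eigenvalue and its module, the tightness arithmetic, and the span of the centred fixed-edge vectors) are correct, and the two steps you defer --- the least-eigenvalue determination and the rigidity argument pinning down the $0/1$ vectors --- are exactly the technical content of the cited references, which the paper likewise does not reproduce.
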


To prove Theorem~\ref{thm:pm}, the authors relied on cocliques in the so-called \emph{perfect matching derangement graph} $P(k)$. The vertices of this graph consists the elements of $\mathcal{P}_k$ and two perfect matchings are adjacent if they are not intersecting. The graph $P(k)$ is the union of all orbital graphs corresponding to $\mathcal{O}_{2\lambda}$, where $\lambda \vdash k$ and $2\lambda$ does not contain any part of size $2$. It follows from Theorem~\ref{thm:pm} that any orbital graph which is a subgraph of $P(k)$ gives an affirmative answer to Question~\ref{qst:main}~\eqref{qst:part1}. Using the upper bound on the dimension of the Specht modules in $\Lambda(2k,\sym(2)\wr \sym(k))$ given in \cite[Lemma~3.7]{behajaina2023intersection} (or \cite[Lemma~3.2]{behajaina20213}) and the ``trace trick'' as given in \cite[Theorem~7.2]{godsil2016algebraic}, one can prove that the smallest eigenvalue of the orbital graph corresponding to the partition $[2k]$ is equal to $-(2k-2)!!$. The latter is afforded by the $[2k-2,2]$-module. Thus Question~\ref{qst:main}~\eqref{qst:part2} is affirmative.

\subsection{Question~\ref{qst:main} for the Gelfand pair $(\sym(2k+1),\sym(2)\wr \sym(k))$}

Let $k$ be a positive integer. For any $\lambda = [\lambda_1,\lambda_2,\ldots,\lambda_t] \vdash k$, under the assumption that $\lambda_{0} = k$, we let 
\begin{align*}
	\mathcal{I}(\lambda):= \left\{ i\in \{1,2,\ldots,t\} \mid \lambda_{i-1}>\lambda_{i} \right\}.
\end{align*}
For any $\lambda = [\lambda_1,\lambda_2,\ldots,\lambda_t] \vdash k$, define the partitions of $2k+1$ given by
\begin{align*}
	\begin{cases}
		2\lambda^{(i)}+1 &:= [2\lambda_1,2\lambda_2,\ldots , 2\lambda_i+1,\ldots,2\lambda_t], \ \mbox{ for } i\in \mathcal{I}(\lambda)\\
		2\lambda^{(t+1)}+1 &:= [2\lambda_1,2\lambda_2,\ldots,2\lambda_t,1].
	\end{cases}
\end{align*}
Further, for any $\lambda = [\lambda_1,\lambda_2,\ldots,\lambda_t] \vdash k$ we define the set
\begin{align*}
	2\lambda +1 = \left\{ 2\lambda^{(i)}+1 : i\in \mathcal{I}(\lambda) \cup \left\{t+1\right\} \right\}.
\end{align*}
Finally, define the set
\begin{align*}
	\Lambda_k := \bigcup_{\lambda \vdash k} 2\lambda +1 .
\end{align*}

The group $\sym(2) \wr \sym(k)$ is a multiplicity-free subgroup of $\sym(2k+1)$ \cite{godsil2010multiplicity}. Using \cite{godsil2010multiplicity}, we know that
\begin{align*}
	\ind{\sym(2) \wr \sym(k)}{\sym(2k+1)} = \sum_{\mu \in \Lambda_k} \chi^{\mu} .
\end{align*}

The action of $\sym(2k+1)$ on cosets of $\sym(2) \wr \sym(k)$ is equivalent to its action on the set $\mathcal{Q}_k$ of all partitions of $[2k+1]$ into $k$ sets of size two and a singleton. Such a partition in $\mathcal{Q}_k$ is called a \emph{quasi-perfect matching} of $K_{2k+1}$. It is clear that
\begin{align*}
	|\mathcal{Q}_k| = (2k+1)!!.
\end{align*}

In the next lemma, we determine the orbitals of $\sym(2k+1)$ in its action on quasi-perfect matchings of $K_{2k+1}$. We omit the proof of this lemma since it is similar to how the orbital graphs of the action of $\sym(2k)$ on cosets of $\sym(2) \wr \sym(k)$ are obtained (see \cite{godsil2016erdos} for details).
\begin{lem}
	An orbital of $\sym(2k+1)$ in its action on quasi-perfect matchings of $K_{2k+1}$ is of the form:
	\begin{enumerate}[(i)]
		\item $\mathcal{O}_{ [2\lambda_1,\ldots,2\lambda_i+1, \ldots,2\lambda_t] } $, for some partition $[\lambda_1,\lambda_2,\ldots,\lambda_t] \vdash k$, where $(P,P^\prime) \in \mathcal{O}_{ [2\lambda_1,\ldots,2\lambda_i+1, \ldots,2\lambda_t] }$ if and only if $P\cup P^\prime$ is a disjoint union of $t-1$ cycles of length $2\lambda_1,\ldots ,2 \lambda_{i-1},2 \lambda_{i+1},\ldots, 2\lambda_t$ and a path of length $2\lambda_i+1$.
		\item $\mathcal{O}_{[2\lambda_1,\ldots,2\lambda_t,1]}$, for some partition $[\lambda_1,\ldots,\lambda_t] \vdash k$, where $(P,P^\prime) \in \mathcal{O}_{[2\lambda_1,\ldots,2\lambda_t,1]}$ if and only if $P\cup P^\prime$ is a disjoint union of $t-1$ cycles of length $2\lambda_1,\ldots,2\lambda_t$ and an isolated vertex.
	\end{enumerate}
\end{lem}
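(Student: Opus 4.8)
The plan is to follow the template of the perfect matching case and read off the orbitals from the \emph{union multigraph} invariant. For an ordered pair $(P,P')$ of quasi-perfect matchings I would superimpose their edges to form a multigraph on $[2k+1]$, and let $u$ and $w$ denote the singletons of $P$ and $P'$. Every vertex other than $u$ and $w$ lies in exactly one edge of $P$ and one of $P'$, hence has degree two; the vertex $u$ meets only a $P'$-edge and $w$ meets only a $P$-edge, so each has degree one (and $u=w$ has degree zero). First I would use this degree sequence to describe the components of $P\cup P'$.

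When $u\ne w$, the two degree-one vertices are joined by a unique path whose edges alternate between $P'$-edges and $P$-edges, starting with a $P'$-edge at $u$ and ending with a $P$-edge at $w$; a parity check on this alternation forces the path to span an odd number $2\lambda_i+1$ of vertices. The remaining degree-two vertices decompose into alternating, hence even, cycles of lengths $2\lambda_1,\dots,2\lambda_{i-1},2\lambda_{i+1},\dots,2\lambda_t$, where $[\lambda_1,\dots,\lambda_t]\vdash k$ and $\lambda_i$ is the part recording the path. When $u=w$, that vertex is isolated and the other $2k$ vertices split into even cycles of lengths $2\lambda_1,\dots,2\lambda_t$ with $[\lambda_1,\dots,\lambda_t]\vdash k$. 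This assigns to each pair exactly the data appearing in the two cases of the statement.

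Next I would prove that this data is a \emph{complete} invariant of the action of $\sym(2k+1)$ on ordered pairs of quasi-perfect matchings, which by the orbital--suborbit correspondence recalled in Section~\ref{sect:background} is what identifies the orbitals. Invariance is clear: a permutation $\sigma$ carries $P\cup P'$ isomorphically to $\sigma(P)\cup\sigma(P')$ while preserving which edges come from which matching. For the converse, given two pairs $(P,P')$ and $(Q,Q')$ with the same data, I would build $\sigma$ componentwise. The distinguished path has a canonical orientation from the $P$-singleton to the $P'$-singleton (or degenerates to the common singleton), so it matches its counterpart directly; each even cycle of $P\cup P'$ can be sent to the cycle of equal length in $Q\cup Q'$ by aligning one of its $P$-edges with a $Q$-edge of the target and proceeding around, which respects the colouring since both cycles alternate. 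Gluing these bijections gives $\sigma\in\sym(2k+1)$ with $\sigma(P)=Q$ and $\sigma(P')=Q'$.

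Finally I would check that no orbital is overlooked, by counting. Interchanging equal parts of $\lambda$ produces the same path-type, so the orbital types arising from a fixed $\lambda$ are indexed by its distinct part-values together with the isolated-vertex case, i.e.\ precisely the set $2\lambda+1$; since each resulting partition of $2k+1$ has a single odd part, it determines $\lambda$ uniquely, and the orbital types biject with $\Lambda_k$. By Corollary~\ref{cor:number-of-orbitals} the number of orbitals equals $\langle\ind{\sym(2)\wr\sym(k)}{\sym(2k+1)},\ind{\sym(2)\wr\sym(k)}{\sym(2k+1)}\rangle=|\Lambda_k|$, which matches, so the list is exhaustive. The step requiring the most care is the complete-invariant verification, where the alternating $P/P'$-structure on the even cycles and the endpoint labelling on the path must both be respected when assembling $\sigma$.
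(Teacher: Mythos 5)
Your proof is correct and takes essentially the approach the paper itself invokes: the paper omits the proof, referring to the analogous analysis of $\sym(2k)$ acting on perfect matchings, and your union-multigraph argument (degree-one singletons forcing a unique alternating path of odd vertex-count, even alternating cycles elsewhere, componentwise construction of $\sigma$ showing the type is a complete invariant, and the final count against $|\Lambda_k|$ via Corollary~\ref{cor:number-of-orbitals}) is exactly that standard analysis carried over to quasi-perfect matchings. No gaps.
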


It is clear that the orbital scheme arising from the action of $\sym(2k+1)$ on the quasi-perfect matchings of $K_{2k+1}$ consists of undirected graphs. 

For any $k\geq 2$, let $Q(k)$ be the graph consisting of the union of all orbital graphs corresponding to $\mathcal{O}_{\mu}$, such that $\mu \in \Lambda_k$ does not contain any part of size $1$ or $2$. We prove the next result about the relation between $Q(k)$ and the perfect matching derangement graph.
\begin{lem}
	For any $k\geq 2$, there exists graph isomorphism $\varphi$ from $Q(k)$ to $P(k+1)$.
\end{lem}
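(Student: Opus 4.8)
The plan is to exhibit an explicit bijection obtained by adjoining one new vertex and matching it to the singleton, and then to check that under this map the combinatorial adjacency rules on the two sides coincide. First I would set up the map $\varphi \colon \mathcal{Q}_k \to \mathcal{P}_{k+1}$ on the vertex level. Regard the perfect matchings counted by $P(k+1)$ as living on $[2k+2] = [2k+1] \cup \{2k+2\}$. Given a quasi-perfect matching $P \in \mathcal{Q}_k$ with singleton $s_P$, define
\[
\varphi(P) := P \cup \{\{s_P, 2k+2\}\},
\]
which is a perfect matching of $K_{2k+2}$. This $\varphi$ is a bijection, with inverse deleting the vertex $2k+2$ and its unique incident edge and declaring the freed partner the new singleton; this is consistent with the equality $|\mathcal{Q}_k| = (2k+1)!! = |\mathcal{P}_{k+1}|$ recorded above.

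Next I would make both adjacency relations explicit. By the previous lemma, $P \cup P'$ splits into even cycles (the even parts $2\lambda_j$) together with either one odd path (an odd part $2\lambda_i+1 \ge 3$) when $s_P \ne s_{P'}$, or an isolated vertex (a part of size $1$) when $s_P = s_{P'}$; a part of size $2$ is precisely a doubled edge, i.e.\ a pair lying in both $P$ and $P'$. Since $Q(k)$ is the union of the orbital graphs $\mathcal{O}_\mu$ with $\mu$ having no part of size $1$ or $2$, the pair $\{P,P'\}$ is an edge of $Q(k)$ if and only if $s_P \ne s_{P'}$ and $P, P'$ share no common pair. On the other side, $P(k+1)$ is the union of the $\mathcal{O}_{2\lambda}$ with $2\lambda$ having no part of size $2$, so $\varphi(P) \sim_{P(k+1)} \varphi(P')$ exactly when $\varphi(P)$ and $\varphi(P')$ share no common pair.

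Finally I would compare the two conditions through $\varphi$. The pairs of $\varphi(P)$ are those of $P$ together with $\{s_P, 2k+2\}$, and likewise for $P'$, so $\varphi(P)$ and $\varphi(P')$ share a pair if and only if either $P$ and $P'$ share a pair inside $[2k+1]$, or their distinguished pairs coincide, that is $\{s_P,2k+2\} = \{s_{P'},2k+2\}$, equivalently $s_P = s_{P'}$. Negating, $\varphi(P)$ and $\varphi(P')$ are disjoint precisely when $P,P'$ share no pair \emph{and} $s_P \ne s_{P'}$, which is exactly the adjacency condition for $Q(k)$. Hence $\varphi$ both preserves and reflects adjacency and is the desired isomorphism. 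There is no deep obstacle here; the only point demanding care is the bookkeeping of part sizes, namely that a part of size $2$ corresponds to a shared pair and a part of size $1$ to coinciding singletons, since it is precisely the adjunction of the vertex $2k+2$ that converts the ``distinct singletons'' requirement of $Q(k)$ into the ``disjoint on the new edge'' requirement of $P(k+1)$.
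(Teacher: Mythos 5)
Your proposal is correct and follows essentially the same route as the paper: the identical map $\varphi$ (replace the singleton $\{a\}$ by the edge $\{a,2k+2\}$), the same bijectivity argument via uniqueness of the singleton, and the same adjacency check, which you merely spell out more explicitly by translating both edge relations into ``no shared pair and distinct singletons'' versus ``no shared pair.'' No gaps; your bookkeeping of part sizes ($1 \leftrightarrow$ coinciding singletons, $2 \leftrightarrow$ shared pair) is exactly the content the paper leaves implicit.
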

\begin{proof}
	Consider the map $\varphi: \mathcal{Q}_k \to \mathcal{P}_{k+1}$ such that any $Q \in \mathcal{Q}_k$ is mapped to the element $\varphi(Q)$ of $\mathcal{P}_{k+1}$  obtained from $Q$ by replacing the singleton $\{a\} \in Q$ by $\{a,2k+2\}$. It is clear $\varphi(Q)  $$ \in \mathcal{P}_{k+1}$ and $\varphi$ is well defined. By uniqueness of the singleton in any element $\mathcal{Q}_k$, it is clear that $\varphi$ is injective. The surjectivity is obtained by replacing the $2$-subset $\{a,2k+2\}$ in any element of $\mathcal{P}_{k+1}$ by $\{a\}$. Hence, $\varphi$ is a bijection.
	
	It remains to prove that $\varphi$ preserves adjacency and non-adjacency. For any $Q,Q^\prime \in \mathcal{Q}_k$ such that $Q \sim_{Q(k)} Q^\prime$, it is clear that $\varphi(Q) \cup \varphi(Q^\prime)$ does not contain any isolated edge, otherwise, $Q \cup Q^\prime$ would contain a $2$-cycle or an isolated vertex. If $P = \varphi(Q),P^\prime = \varphi(Q^\prime) \in P(k+1)$ are adjacent, then $P\cup P^\prime$ does not contain a $2$-cycle, so the removal of the vertex $2k+2$ cannot give rise to a graph with an isolated vertex or a graph with $2$-cycle. Consequently, $\varphi$ is an isomorphism.
\end{proof}

It is not hard to see that the isomorphism $\varphi$ also induces an isomorphism on the orbital graphs that are subgraphs of $Q(k)$ and $P(k+1)$.

An immediate corollary of the above lemma and Theorem~\ref{thm:pm} is the following.
\begin{thm}
	If $\mathcal{F} \subset \mathcal{Q}_k$ is intersecting, then $|\mathcal{F}| \leq (2k-1)!!$. Equality holds if and only if $\mathcal{F}$ is an orbit of size $(2k-1)!!$ of a conjugate of $\sym([2k,1])$ or a conjugate of $\sym([2k-1,2])$.
\end{thm}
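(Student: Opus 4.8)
The plan is to deduce this corollary directly from Theorem~\ref{thm:pm} via the graph isomorphism $\varphi$ established in the preceding lemma. First I would translate the intersection condition on $\mathcal{Q}_k$ into a coclique condition. A family $\mathcal{F}\subseteq \mathcal{Q}_k$ is intersecting precisely when no two of its elements are adjacent in $Q(k)$, that is, $\mathcal{F}$ is a coclique of $Q(k)$; this is because $Q(k)$ is the union of the orbital graphs $\mathcal{O}_\mu$ whose type $\mu\in\Lambda_k$ has no part of size $1$ or $2$, and a pair $(P,P')$ has $P\cup P'$ free of isolated vertices and isolated edges exactly when $P\cap P'=\varnothing$. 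Thus the maximum intersecting families in $\mathcal{Q}_k$ are precisely the maximum cocliques of $Q(k)$.

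Next I would invoke the isomorphism $\varphi:Q(k)\to P(k+1)$ from the previous lemma, which carries cocliques of $Q(k)$ bijectively onto cocliques of $P(k+1)$ while preserving cardinalities. By Theorem~\ref{thm:pm} applied with $k$ replaced by $k+1$, a coclique (i.e.\ intersecting family) in $\mathcal{P}_{k+1}$ has size at most $(2(k+1)-3)!! = (2k-1)!!$, with equality exactly for the families of all perfect matchings of $K_{2k+2}$ containing a fixed edge. Transporting the bound back through $\varphi$ gives $|\mathcal{F}|\leq (2k-1)!!$, which is the desired inequality.

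Finally I would identify the extremal families. Under $\varphi^{-1}$, a maximum coclique of $P(k+1)$ consisting of all perfect matchings containing a fixed edge $e$ of $K_{2k+2}$ pulls back to a maximum coclique of $Q(k)$, and the structure of $e$ relative to the distinguished vertex $2k+2$ determines the stabilizer. If $e$ does not meet $2k+2$, then $e$ is a genuine $2$-subset $\{a,b\}$ of $[2k+1]$ and the corresponding family consists of all quasi-perfect matchings of $K_{2k+1}$ containing the pair $\{a,b\}$, whose setwise stabilizer is a conjugate of $\sym([2k-1,2])$. If $e=\{a,2k+2\}$ meets the distinguished vertex, then under $\varphi^{-1}$ this family becomes all quasi-perfect matchings with singleton $\{a\}$, whose stabilizer is a conjugate of $\sym([2k,1])$. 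In both cases the orbit has size $(2k-1)!!$, so these are exactly the maximum intersecting families. The only mild subtlety, and the step warranting the most care, is checking that $\varphi$ sends the two combinatorially distinct types of extremal family of $\mathcal{P}_{k+1}$ (according to whether the fixed edge is incident to $2k+2$) to the two listed conjugacy types of stabilizers; everything else is a direct transport along the isomorphism.
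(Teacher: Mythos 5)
Your proposal is correct and takes essentially the same route as the paper, which derives this theorem as an immediate corollary of the isomorphism $\varphi\colon Q(k)\to P(k+1)$ together with Theorem~\ref{thm:pm} applied to $\mathcal{P}_{k+1}$; the two conjugacy types $\sym([2k,1])$ and $\sym([2k-1,2])$ arise exactly as you say, according to whether the fixed edge of the extremal family in $K_{2k+2}$ is incident to the added vertex $2k+2$. The details you supply (intersecting families of $\mathcal{Q}_k$ being precisely the cocliques of $Q(k)$, and the identification of the pulled-back stabilizers and orbit sizes) are the ones the paper leaves implicit.
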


We note that the partitions $[2k,1]$ and $[2k-1,2]$ are exactly the two resulting partitions obtained by applying the Branching Rule from $\sym(2k+1)$ to $\sym(2k)$ (see \cite{sagan2001symmetric} for details) on the partition $[2k,2]$.

Since $Q(k)$ is a union of orbital graphs of $\sym(2k+1)$ acting on $\mathcal{Q}_k$, we conclude that the orbit of $\sym([2k,1])$ of size $(2k-1)!!$ is a coclique of any orbital graph contained in $Q(k)$. Hence, Question~\ref{qst:main}~\eqref{qst:part1} is affirmative. 

For Question~\ref{qst:main}~\eqref{qst:part2}, we consider the orbital graph $X$ corresponding to $\mathcal{O}_{[2k+1]}$. Due to Question~\ref{qst:main}~\eqref{qst:part1} being affirmative, $-(2k-2)!!$ is the eigenvalue of $X$ afforded by $[2k,1]$-module. Let $Y$ be the subgraph of $P(k+1)$ which is the orbital  graph of $\sym(2k+2)$ acting on $\mathcal{P}_{k+1}$ corresponding to the partition $[2k+2]$. It is known that the smallest eigenvalue $-(2k-2)!!$ of $Y$ is afforded by the $[2k,2]$-module and is the least eigenvalue of $Y$. Therefore, $-(2k-2)!!$ is also the smallest eigenvalue of $X,$ since  $X$ and $Y$ are isomorphic.

\section{Conclusion and future work}\label{sect:conclusion}

In this paper, we proved in Theorem~\ref{thm:main2} that for any Gelfand pair $(G,H)$, where $G = \sym(n)$ and $H$ is listed in lines~1-6 of Table~\ref{table:intrans2}, there exists a graph in the corresponding orbital scheme for which Question~\ref{qst:main} is affirmative. We also gave an example where the answer to Question~\ref{qst:main} is negative.
 
We do not know if Question~\ref{qst:main} is true in general for all multiplicity-free subgroups $H$ of $\sym(n)$ such that the second largest in dominance ordering in $\Lambda(n,H)$ is $[n-1,1]$. Therefore, we pose the following problem. 

\begin{prob}
	Determine whether Question~\ref{qst:main} is true for all other Gelfand pairs $(\sym(n),H)$ in which $[n-1,1]$ is the second largest in $\Lambda(n,H)$.
\end{prob}

For orbital schemes arising from other multiplicity-free subgroups, we expect the case where the second largest in dominance ordering in $\Lambda(n,H)$ is not equal to $[n-1,1]$ to be more complicated than the case considered in this paper. We provide some computational results on the small multiplicity-free subgroups obtained from \verb*|Sagemath| \cite{sagemath} in Table~\ref{tab:table2}.

\begin{table}[t]
	\begin{center}
		\begin{tabular}{l ccclcc}
			Group & $n$ & index & rank& Second largest & Question~\ref{qst:main}\eqref{qst:part1} & Question~\ref{qst:main}\eqref{qst:part2} \\
			$AGL(1,5) \cap A_5$ & $5$ & $12$ & $4$&$[3,2]$ & Yes & Yes \\
			$AGL(1,5)$ & $5$ & $6$ & $2$ & $[2^2,1]$ & No & No \\
			$PSL(2,5)$ & $6$ & $12$ & $4$ &  $[3^2]$& Yes & Yes \\
			$PGL(2,5)$ & $6$ & $6$ & $2$ & $[2^3]$ & No & No \\
			$AGL(1,7)$ & $7$ & $120$ & $7$ & $[4,3]$ & No & No \\
			$PSL(3,2)$ & $7$ & $30$ & $4$ & $[4,3]$ & Yes & No \\
			$A\Gamma L(1,8)$ & $8$ & $240$ & $8$ & $[5,1^3]$ & No &  No\\
			$PGL(2,7)$ & $8$ & $120$ & $5$ & $[4,4]$ & No & No \\
			$AGL(3,2)$ & $8$ & $30$ & $4$ & $[4^2]$ & Yes & No \\
			$\sym(2) \wr \alt(3)$ & $6$ & $30$ &$5$ & $[4,2]$ & Yes & No \\
			$\sym(2) \wr \alt(4)$ & $6$ & $30$ &$5$ & $[6,2]$ & Yes & No \\
		\end{tabular}
		\caption{Answer to Question~\ref{qst:main} for small multiplicity-free subgroup. \label{tab:table2}}
	\end{center}
\end{table}

We also ask the following problem.
\begin{prob}
	Find an EKR type theorem for partially $2$-intersecting families of $U_k$. That is, determine the maximum cocliques of the folded Johnson graph $\overline{J}(2k,k)$.
\end{prob}


\noindent{\bf Acknowledgement.} \ 
The research of both authors are supported in part by the Ministry of Education, Science and Sport of Republic of Slovenia (University of Primorska Developmental funding pillar).
\vspace*{0.5cm}

\noindent {\bf Conflict of interest.} None.

 \end{document}